

\documentclass[final,3p,times]{elsarticle}

\biboptions{sort&compress}


\usepackage{amssymb}
\usepackage{amsmath}
\usepackage{amsthm}
\usepackage{mathrsfs}
\usepackage{graphicx}                         
\usepackage{hyperref}
\usepackage[acronym]{glossaries}
\usepackage{booktabs}         
\usepackage{nicefrac}         
\usepackage{xcolor}           
\usepackage{algorithm}        
\usepackage{algpseudocode}    
\usepackage{multirow}         
\usepackage{scalerel}         
\usepackage{enumitem}         
\usepackage{cleveref}
\setkeys{glslink}{hyper=false}
\crefname{appendix}{}{}
\usepackage{custom}           
\setlength{\bibsep}{0.0pt}
\Crefname{appendix}{Appendix}{Appendices} 
\crefname{appendix}{appendix}{appendices} 


\journal{Journal of Computational Physics}

\begin{document}

\begin{frontmatter}


\title{Streaming Operator Inference for Model Reduction of \\ Large-Scale Dynamical Systems\tnoteref{funding}}
\tnotetext[funding]{The authors were supported by the Department of Energy Office of Science Advanced Scientific Computing Research, DOE Award DE-SC0024721.}


\author[gatech-ae]{Tomoki Koike\corref{cor}} 
\ead{tkoike@gatech.edu}
\cortext[cor]{Corresponding author}
\author[nlr]{Prakash Mohan}
\author[nlr]{Mark T. Henry de Frahan}
\author[nlr]{Julie Bessac}
\author[gatech-ae,gatech-cse]{Elizabeth Qian}

\affiliation[gatech-ae]{
    organization={
        School of Aerospace Engineering, Georgia Institute of Technology
    }, 
    city={Atlanta},
    postcode={30332}, 
    state={GA},
    country={USA}
}
\affiliation[gatech-cse]{
    organization={
        School of Computational Science and Engineering, Georgia Institute of Technology
    }, 
    city={Atlanta},
    postcode={30332}, 
    state={GA},
    country={USA}
}
\affiliation[nlr]{
    organization={
        Computational Science Center, National Laboratory of the Rockies (NLR)
    }, 
    addressline={15013 Denver West Parkway}, 
    city={Golden},
    postcode={80401}, 
    state={CO},
    country={USA}
}

\begin{abstract}
    Projection-based model reduction enables efficient simulation of complex dynamical systems by constructing low-dimensional surrogate models from high-dimensional data. The Operator Inference (OpInf) approach learns such reduced surrogate models through a two-step process: constructing a low-dimensional basis via Singular Value Decomposition (SVD) to compress the data, then solving a linear least-squares (LS) problem to infer reduced operators that govern the dynamics in this compressed space, all without access to the underlying code or full model operators, i.e., non-intrusively.  Traditional OpInf operates as a batch learning method, where both the SVD and LS steps process all data simultaneously. This poses a barrier to deployment of the approach on large-scale applications where data set sizes prevent the loading of all data into memory simultaneously. Additionally, the traditional batch approach does not naturally allow model updates using new data acquired during online computation.  To address these limitations, we propose Streaming OpInf, which learns reduced models from sequentially arriving data streams.  Our approach employs incremental SVD for adaptive basis construction and recursive LS for streaming operator updates, eliminating the need to store complete data sets while enabling online model adaptation. The approach can flexibly combine different choices of streaming algorithms for numerical linear algebra: we systematically explore the impact of these choices both analytically and numerically to identify effective combinations for accurate reduced model learning. Numerical experiments on benchmark problems and a large-scale turbulent channel flow demonstrate that Streaming OpInf achieves accuracy comparable to batch OpInf while reducing memory requirements by over 99\% and enabling dimension reductions exceeding 31,000x, resulting in orders-of-magnitude faster predictions. Our results establish Streaming OpInf as a scalable framework for reduced operator learning in large-scale and online streaming settings.
\end{abstract}



\begin{keyword}
model reduction \sep operator inference \sep data streaming \sep incremental SVD \sep recursive least-squares
\end{keyword}

\end{frontmatter}


\section{Introduction}\label{sec:intro}

The ability to use data to learn accurate and efficient reduced models for complex dynamical systems is crucial across scientific and engineering applications, as these models facilitate rapid simulations for many-query computations such as uncertainty quantification, inverse problems, optimization, and control. \Gls{opinf}~\cite{peherstorferW2016,kramer2024Learning} refers to a family of methods that learn such efficient reduced models from data by fitting reduced model operators to minimize the residual of the reduced system equations. Assuming a polynomial reduced model, \gls{opinf} learns the coefficient matrices representing polynomial nonlinearities. \Gls{opinf} computes such reduced operators following two key steps: (i) dimension reduction by projecting data onto a reduced basis constructed via \gls{pod}, which computes an optimal low-dimensional approximation via \gls{svd}, and (ii) solving a linear \gls{ls} problem to compute the reduced operators by fitting them to all projected data at once. The first step emanates from classical projection-based model reduction, which projects the full-dimensional model onto a low-dimensional subspace to yield accurate yet computationally efficient reduced models~\cite{benner2015survey}, while the second step leverages data-driven learning to infer reduced operators without intrusive access to the underlying code or full model operators, and thereby making \gls{opinf} a \emph{non-intrusive} method.

The two-step \emph{reduce-then-learn} formulation of \gls{opinf} is highly amenable to extensions.
Some researchers modify the projection step by replacing the linear \gls{pod} basis with nonlinear manifolds~\cite{geelen2023Operator,geelen2024Learning,schwerdtner2025operator} or adopting oblique projections~\cite{padovan2024DataDriven}. Others alter the learning step through regularized or constrained formulations for robustness~\cite{mcquarrie2021Datadriven,sawant2023Physicsinformed,boef2024Stable}, tensor or nested operator training for scalability~\cite{danis2025TensorTrain,aretz2025Nested}, Bayesian or Gaussian process regression for uncertainty quantification~\cite{guo2022Bayesiana,mcquarrie2025Bayesian}, or physics-informed constraints on coefficients for stability and structure preservation~\cite{sharma2022Hamiltonian,sharma2024lagrangian,koike2024Energy,gruber2025Variationally,goyal2025guaranteed,shuai2025Symmetryreduced,zastrow2025block}. The work~\cite{prakash2025Nonintrusive} reverses the standard workflow by first learning full-dimensional operators and then projecting them onto a reduced basis. Complementary strategies include data reprojection for noise attenuation~\cite{peherstorfer2020Sampling,uy2023Active}, variable lifting to extend \gls{opinf} to non-polynomial systems~\cite{qian2020Lift}, and exact reconstruction of intrusive operators~\cite{rosenberger2025exact}. These extensions have enabled vast applications across fluid dynamics~\cite{mcquarrie2021Datadriven,qian2022Reduced}, mechanical systems~\cite{filanova2023Operatora,gruber2023canonical,sharma2024preserving,geng2024Gradient}, chemical kinetics~\cite{kim2025physically}, ice sheet modeling~\cite{aretz2025Nested,rosenberger2025exact}, plasma turbulence~\cite{gahr2024Scientific}, aeroelastic flutter~\cite{zastrow2023data}, and solar wind dynamics~\cite{issan2023Predicting}. 

However, \gls{opinf} and other existing reduced model learning methods~\cite{lucia2004reduced,mignolet2013review,guo2019data} are typically \emph{batch} learning methods which rely on collecting ample data over time, storing them in their entirety, and then learning the reduced models offline by processing all data at once. However, many real-world applications---such as climate modeling~\cite{randall2007climate,schar2020kilometer,hersbach2020era5}, fluid dynamics~\cite{willcox2007model,georgiadis2010large,nielsen2024large}, and astrophysics~\cite{schmidt2015large,burkhart2020Catalogue,angulo2022large}---present two critical challenges. First, high-resolution simulations generate \emph{large-scale} datasets requiring terabytes to petabytes of storage~\cite{ohana2024well,muthukrishnan2005data}, making it infeasible to store all data in memory or on disk. Second, some large-scale applications such as digital twins~\cite{torzoni2024digital,henneking2025real}, system monitoring~\cite{tomsovic2005designing}, and anomaly detection~\cite{lan2009toward} often demand online model learning for \emph{real-time} decision-making and control, which limits the possibility of waiting for complete data collection before training~\cite{muthukrishnan2005data}. Hence, in this work, we propose a new reduced model learning paradigm based on \emph{streaming} data, where state snapshots and possibly their time derivatives arrive sequentially with only a small subset accessible at any given moment, thereby achieving significant memory savings while enabling real-time model updates and predictions.

To address memory and storage challenges, parallel/distributed computing~\cite{farcas2025Distributed} and domain decomposition methods~\cite{geelen2022Localized,farcas2024Domain,moore2024Domain,gkimisis2025non} for \gls{opinf} have been proposed. The parallel approach distributes data and basis computation across multiple processors, aggregates results to obtain reduced data, and then computes the reduced operators. Domain decomposition methods exploit spatial locality by dividing the domain into subdomains, learning reduced models within each, and coupling them to form a global model. Similar strategies have been applied to other model reduction techniques~\cite{benner2000balanced,antil2010domain,knezevic2011high,ares2025parallel,perotto2010hierarchical,hoang2021domain,diaz2024fast}.

While these approaches can make learning reduced models tractable for some large-scale problems by partitioning the data set along rows, they remain batch methods in the sense that each partition requires simultaneous access to all time snapshots. In contrast, exploitation of temporal partitioning, where data can be processed incrementally column-by-column as snapshots arrive, remains largely unexplored. This motivates our development of \emph{Streaming \gls{opinf}}, which reformulates the two main components of \gls{opinf} for streaming data by using streaming \gls{svd} formulations for data reduction and using a \gls{rls} formulation to solve the reduced operator learning problem. In summary, this work makes the following contributions:
\begin{enumerate}[leftmargin=*,itemsep=0em]
    \item We propose a streaming approach to \gls{opinf} that learns reduced models incrementally as data arrives, enabling scalability to large-scale datasets and laying the groundwork for real-time model updates and predictions. 
    \item We implement and compare several state-of-the-art streaming algorithms from numerical linear algebra for reduced basis construction and operator learning, including \gls{isvd} methods~\cite{baker2012Lowrank}, randomized streaming \gls{svd}~\cite{tropp2019Streaming}, and \gls{rls} variants~\cite{sayed2011Adaptive,haykin2014Adaptive}, evaluating their accuracy, memory requirements, and computational efficiency.
    \item We demonstrate that Streaming \gls{opinf} achieves significant memory savings compared to batch \gls{opinf} while learning accurate reduced models from streaming data on benchmark problems and a large-scale turbulent three-dimensional channel flow simulation with nearly 10 million degrees of freedom.
\end{enumerate}

Batch processing challenges have motivated streaming approaches for other operator learning and modal analysis methods such as \gls{dmd}, spectral \gls{pod}, and their variants, which identify a system's natural frequencies, damping factors, and mode shapes~\cite{schmid2010Dynamic,aarontowne2018Spectral}. Streaming versions of spectral \gls{pod}~\cite{schmidt2019Efficient} and quadratic manifold construction~\cite{schwerdtner2025Online} employ \gls{isvd} for incremental basis construction but focus solely on extracting reduced manifolds without addressing operator learning. Streaming \gls{dmd}~\cite{hemati2014Dynamic,matsumoto2017Onthefly,liew2022Streaming} uses \gls{rls} or rank-1 updates to incrementally update decompositions, with extensions including parallel~\cite{anantharamu2019Parallel} and adaptive~\cite{zhang2019Online,alfatlawi2020Incremental} settings. While \gls{dmd} enables both analysis and prediction, it differs fundamentally from \gls{opinf} in its treatment of nonlinear systems. That is, \gls{dmd} identifies linear dynamics either directly or through the Koopman framework~\cite{rowley2009spectral,mezic2013analysis}, where nonlinear dynamics are embedded into a higher-dimensional space with linear evolution. This lifting introduces practical challenges including approximation errors from finite-dimensional truncation of the infinite-dimensional Koopman operator and potentially large lifted dimensions that negate computational benefits. This challenge of learning reduced operators in the lifted space arises in another similar work of online robust Koopman operator learning~\cite{sinha2023Online}.
In contrast, \gls{opinf} directly learns polynomial operators in the reduced space without such lifting.

Another related work, Streaming weak-SINDy~\cite{russo2025Streaming}, learns weak-form \gls{pde} solutions. The method accumulates feature and target matrices via streaming integration with streaming \gls{pod}, deferring regression to an offline stage where sequential thresholding \gls{ls} requires solving the batch \gls{ls} problem repeatedly until sparsity is achieved, whereas we consider online updating of the \gls{ls} solution via \gls{rls} along with different combinations of algorithms for streaming reduced operator learning. All existing streaming methods introduced so far show growing recognition that incremental learning is essential for scalability and real-time adaptation. However, we emphasize that no existing approach combines dimension reduction via incremental basis updates with direct learning of reduced polynomial operators from streaming data, which is the contribution of our Streaming \gls{opinf} framework.

The remainder of this paper is organized as follows. Section~\ref{sec:bg} reviews \gls{opinf} and the key ingredients of Streaming \gls{opinf}: streaming \gls{svd} and \gls{rls}. Section~\ref{sec:stream} presents the Streaming \gls{opinf} formulation with guidance on algorithm selection for incremental basis construction and operator learning. Section~\ref{sec:experiments} demonstrates effectiveness on benchmark problems and a large-scale turbulent three-dimensional channel flow simulation. Section~\ref{sec:conclusion} summarizes findings and discusses future directions.

\section{Background}\label{sec:bg}

In~\Cref{sec:bg:opinf}, we provide the necessary background on \gls{opinf} for reduced operator learning. We then introduce the streaming numerical linear algebra methods upon which our proposed streaming operator learning framework is built, beginning with streaming SVD methods in~\Cref{sec:bg:isvd} and finishing with recursive least squares methods in~\Cref{sec:bg:rls}.

\subsection{The Operator Inference Problem}\label{sec:bg:opinf}

In this work, we consider an \(n\)-dimensional \gls{ode} with a quadratic nonlinearity in the state vector \(\xvec(t) \in \R^n\): 
\begin{equation}\label{eqn:poly-sys}
    \xdot(t) = \Amat_1 \xvec(t) + \Amat_2 (\xvec(t) \otimes \xvec(t)) + \Bmat\uvec(t) + \cvec,
\end{equation}
where \( \otimes \) denotes the Kronecker product. The system evolves over time \( t \in [0, T]\) from an initial condition \( \xvec(0) = \xvec_0 \). Here, \(\Amat_1\) and \(\Amat_2\) are the linear and quadratic operators, \(\Bmat\in\R^{n\times m}\) is the input-to-state mapping for a bounded input \(\uvec\in\R^m\) with \( m \ll n \), and \(\cvec\in\R^n\) is a constant term representing a mean shift or other affine contribution. This linear-quadratic structure in~\eqref{eqn:poly-sys} is representative but not restrictive; the \gls*{opinf} framework accommodates alternative polynomial structures, including cubic or higher-order terms, as well as systems without input or constant terms. We demonstrate this flexibility in~\Cref{sec:experiments}.

This work is motivated by the setting where \( n \) is on the order of millions or larger, as commonly arises from spatial discretization of \glspl*{pde}. We assume that the matrix operators in~\eqref{eqn:poly-sys} are unavailable in explicit form (i.e., non-intrusive). Our goal is to learn a reduced model of dimension \( r \ll n \) from sampled data that enables rapid, accurate prediction. Specifically, we assume that the following data are available: \( K \) state snapshots \( \{\xvec_k\}_{k=1}^K \) and their corresponding time derivatives \( \{\xdot_k\}_{k=1}^K \) and input snapshots \( \{\uvec_k\}_{k=1}^K \). These data usually correspond to saving the numerical solution of~\eqref{eqn:poly-sys} at discrete times \( t_1, \ldots, t_K \). If time derivatives are not directly available they may be approximated via finite differences.

To learn a reduced model, we employ \gls{opinf}~\cite{peherstorferW2016}, which follows a \emph{reduce-then-learn} paradigm: first project the high-dimensional data onto a low-dimensional subspace, then solve a data-driven regression problem to learn reduced operators from the projected data to obtain a reduced model that preserves the polynomial structure of~\eqref{eqn:poly-sys}. This approach avoids access to the underlying code or full operators in~\eqref{eqn:poly-sys}. We summarize these two key steps below:
\begin{enumerate}[label=\textbf{\arabic*.}, leftmargin=*, itemsep=0em]
    \item \textbf{Reduction step:} This step constructs a reduced basis via \gls{pod}~\cite{lumley1967,sirovich1987,berkoozHL1993} and projects the data onto this basis. Let us form the snapshot matrix \( \Xmat = [\xvec_1, \ldots, \xvec_K] \in \R^{n \times K} \) from the sampled snapshots. \Gls{pod} is based on the \gls*{svd} \(\Xmat = \Vmat \mathbf{\Sigma} \Wmat^\top \), where \(\Vmat\in\R^{n\times \ell}\) and \(\Wmat\in\R^{K\times \ell}\) are orthogonal matrices satisfying \(\Vmat^\top \Vmat = \Wmat^\top\Wmat= \Id_\ell \), and \(\mathbf{\Sigma}\in\R^{\ell\times \ell}\) is a diagonal matrix with singular values \(\sigma_1 \geq \cdots \geq \sigma_\ell \geq 0\) for \(\ell \leq \min \{n,K\} \). The \gls{pod} basis \( \Vmat_r \in \R^{n\times r}\) is then constructed from the leading \( r \) left singular vectors that capture the dominant energy modes. Further, define the reduced state \(\xhat(t) = \Vmat_r^\top\xvec(t) \in \R^r\). Projecting the \( K \) snapshots onto the \gls*{pod} basis yields reduced states \( {\{\xhat_k \}}_{k=1}^K = {\{ \Vmat_r^\top\xvec_k \}}_{k=1}^K \) and reduced time derivatives \( {\{\dot{\xhat}_k \}}_{k=1}^K = {\{ \Vmat_r^\top\xdot_k \}}_{k=1}^K  \). This reduced data is the basis of the operator learning step.
    \item \textbf{Learning step:} Using the projected data, infer the reduced operators:
    \begin{equation}\label{eqn:intrusive-reduced-model}
        \dot{\xhat}(t) = \Ahat_1 \xhat(t) + \Ahat_2 (\xhat(t) \otimes \xhat(t)) + \Bhat\uvec(t) + \chat,
    \end{equation}
    where \( \Ahat_1 \in \R^{r\times r}\), \( \Ahat_2 \in \R^{r\times r^2}\), \( \Bhat \in \R^{r\times m}\), and \( \chat \in \R^r \) are determined by solving the regularized \gls*{ls} problem:
    \begin{equation}\label{eqn:opinf-euclidean}
        \min_{\Ahat_1, \Ahat_2, \Bhat, \chat} \frac{1}{K} \sum_{k=1}^K \left \| \dot{\xhat}_k - \Ahat_1\xhat_k - \Ahat_2(\xhat_k \otimes \xhat_k) - \Bhat\uvec_k - \chat \right \|^2_2 
        + 
        \left \| \Gammamat^{1/2} {[\Ahat_1, \Ahat_2, \Bhat, \chat]}^\top \right \|_F^2,
    \end{equation}
    where the positive definite matrix \( \Gammamat = \Gammamat^\top \in \R^{d \times d} \) with \( d = r + r^2 + m + 1 \) defines a Tikhonov regularization term~\cite{mcquarrie2021Datadriven}.
\end{enumerate}
When the fitted model~\eqref{eqn:intrusive-reduced-model} has the same polynomial structure as~\eqref{eqn:poly-sys}, unregularized \gls*{opinf} with \( \Gammamat = \0_{d\times d} \) recovers the intrusive Galerkin projection operators under certain conditions~\cite[Thm.\,1]{peherstorferW2016}.
However, regularization is particularly important in the streaming setting due to error accumulation over many operator updates, as we will discuss in~\Cref{sec:stream}. 

The \gls*{opinf} problem~\eqref{eqn:opinf-euclidean} may be expressed as a standard regularized linear \gls*{ls} problem as follows:
\begin{equation}\label{eqn:opinf-frobenius}
    (\textbf{OpInf}\,) \qquad \min_{\Omat} \frac{1}{K} \|\Rmat - \Dmat\Omat \|_F^2
    +
    \|\Gammamat^{1/2}\Omat \|_F^2,
\end{equation}
where the right-hand side, data, and operator matrices are defined to be
\begin{equation}\label{eqn:opinf-main-matrices}
    \begin{gathered}
        \Rmat = {[\dot{\xhat}_1,\, \ldots,\, \dot{\xhat}_K]}^\top \in \R^{K\times r}, \qquad
        \Dmat = [\Xhat^\top, ~{(\Xhat \odot \Xhat)}^\top,~\Umat^\top,~\mathbf{1}_K] \in \R^{K\times d}, \qquad
        \Omat = {[\Ahat_1, ~\Ahat_2,~\Bhat,~\chat]}^\top \in \R^{d\times r},
    \end{gathered}
\end{equation}
with
\begin{equation}\label{eqn:opinf-data-matrices}
    \begin{gathered}
        \Xhat = [\xhat_1,\, \ldots,\, \xhat_K] \in \R^{r\times K}, \qquad  
        \Xhat \odot \Xhat = [\xhat_1 \otimes \xhat_1,\, \ldots,\, \xhat_K \otimes \xhat_K] \in \R^{r^2\times K}, \qquad
        \Umat = [\uvec_1,\, \ldots,\, \uvec_K] \in \R^{m\times K},
    \end{gathered}
\end{equation}
where \(\mathbf{1}_K \in \R^{K}\) is a vector of ones and \( \odot \) denotes the Khatri-Rao product (column-wise Kronecker product).

Alternatively, augmenting the data and right-hand side matrices yields the following linear least squares formulation:
\begin{equation}\label{eqn:opinf-pinv-regularized}
    \min_{\Omat} \frac{1}{K}\|\bbar{\Rmat} - \bbar{\Dmat}\Omat \|_F^2, \qquad \text{where} \qquad 
    \bbar{\Dmat} = \begin{bmatrix} \Dmat \\ \Gammamat^{1/2} \end{bmatrix} \in \R^{(K+d)\times d} \quad \text{ and } \quad 
    \bbar{\Rmat} = \begin{bmatrix} \Rmat \\ \0_{d\times r} \end{bmatrix} \in \R^{(K+d)\times r}.
\end{equation}
If \( \Dmat \) has full column rank and \( \Gammamat \neq \0_{d \times d} \), this overdetermined system has unique solution \( \Omat = \bbar{\Dmat}^\dagger\bbar{\Rmat} \). We present formulation~\eqref{eqn:opinf-pinv-regularized} since it makes clear the \gls{rls} formulations introduced in~\Cref{sec:bg:rls}.

Standard \gls*{opinf} is a \textit{batch} method built on standard direct computation of the \gls*{svd} and direct solution of the \gls*{ls} problem. Loading and storing the entire data set in working memory incurs a memory cost of \( O(n(K+r)) \) for the \gls*{svd} and \( O(d(K+d)) \) for the \gls*{ls} solution, which is infeasible for real-time streaming data settings and large-scale problems. While distributed computing~\cite{farcas2025Distributed} and domain decomposition~\cite{farcas2024Domain} address the memory bottleneck, we pursue an alternative by adapting OpInf to the streaming setting, where snapshots \(\xvec_1,\ldots,\xvec_k \) are processed incrementally as they arrive. This requires two key ingredients: (i) incremental basis construction via \gls*{isvd}, and (ii) recursive operator learning via \gls*{rls}\@ which we now introduce. 

\subsection{Streaming SVD Methods}\label{sec:bg:isvd}

 We seek to address the high memory cost in standard \gls*{svd} for storing the data and basis, by introducing two streaming \gls*{svd} algorithms, Baker's \gls*{isvd}~\cite{baker2012Lowrank} and SketchySVD~\cite{tropp2019Streaming}.

\subsubsection{Baker's iSVD Algorithm}\label{sec:bg:baker-isvd}

Assume at time step \(k\) we have streamed snapshots \( [\xvec_1,\ldots,\xvec_k]\) and computed a rank-\( r_k \) SVD of the data matrix \(\Xmat_k \in \R^{n \times k}\) as \( \Xmat_k = \Vmat_{r_k} \, \boldsymbol{\Sigma}_{r_k}\, \Wmat^\top_{r_k} \), where \( k \in \{1,\ldots,K\} \) and \( r_k \leq r \). 

The core innovation in iSVD algorithms~\cite{bunch1978Updating,gu1993Stable,chandrasekaran1997Eigenspace,levey2000Sequential,brand2002Incremental,baker2012Lowrank} is efficiently updating the SVD components when a new snapshot \(\xvec_{k+1}\) arrives. This rank-1 increment is based on the identity:
\begin{equation}\label{eqn:isvd-identity}
    \begin{bmatrix} \Vmat_{r_k} \,\Sigmamat_{r_k}\,\Wmat^\top_{r_k} & \xvec_{k+1} \end{bmatrix}
    = \begin{bmatrix} \Vmat_{r_k} & \xvec_\perp/p \end{bmatrix} 
    \begin{bmatrix} \Sigmamat_{r_k} & \qvec \\ \mathbf{0}_{1 \times r_k} & p \end{bmatrix} 
    \begin{bmatrix} \Wmat_{r_k} & \mathbf{0}_{k\times 1} \\ \mathbf{0}_{1\times r_k} & 1 \end{bmatrix}^\top
    = \wh{\Vmat}\Jmat\wh{\Wmat}^\top,
\end{equation}
where \( \qvec = \Vmat^\top_{r_k} \xvec_{k+1} \in \R^{r_k}\), \( \xvec_\perp = \xvec_{k+1} - \Vmat_{r_k} \,\qvec \in \R^n \), and \( p = \|\xvec_\perp \|_2 \). Here, \( \Vmat_{r_k} \) is augmented by the normalized residual \( \xvec_\perp/p \), creating a small \( (r_k+1) \times (r_k+1) \) intermediate matrix \(\Jmat \), while the rightmost matrix expands \( \Wmat_{r_k} \) to accommodate the new snapshot.

The SVD matrices are then updated by computing \( \Vmat_J \,\Sigmamat_J \,\Wmat_J^\top = \mathsf{svd}(\Jmat) \) and setting
\begin{equation}\label{eqn:isvd-update}
    \Vmat_{r_{k+1}} = \wh{\Vmat}\Vmat_J, \qquad 
    \boldsymbol{\Sigma}_{r_{k+1}} = \boldsymbol{\Sigma}_J, \qquad 
    \Wmat_{r_{k+1}} = \wh{\Wmat} \Wmat_J.
\end{equation}

In this work, we employ Baker's iSVD framework~\cite{baker2012Lowrank}, which builds on these identities~\eqref{eqn:isvd-identity} and~\eqref{eqn:isvd-update} as summarized in~\Cref{alg:baker-isvd}. Baker's method is selected specifically because, similar to~\cite{zha1999Updating}, it addresses two key challenges in standard \gls*{isvd} methods: determining whether new snapshots warrant basis updates (typically via residual norm \(p\) thresholding~\cite{brand2002Incremental}) and maintaining numerical stability against accumulated errors (often requiring ad-hoc reorthogonalization~\cite{oxberry2017Limitedmemory,brand2002Incremental}).

\subsubsection{Randomized SketchySVD Algorithm}\label{sec:bg:sketchysvd}

Rather than updating the \gls*{svd} components at each iteration as in Baker's \gls*{isvd}, SketchySVD~\cite{tropp2019Streaming} is a randomized algorithm that computes the truncated \gls*{svd} by multiplying the data with predefined random matrices to compress them into low-dimensional randomized \textit{sketches} during streaming and computing the final \gls*{svd} only after processing the entire dataset.

Let the sketch sizes be denoted \( q \) and \( s \), which control the dimensions of the compressed representations and determine the accuracy-memory tradeoff. SketchySVD maintains three randomized sketches \( \Xcal_{\mathsf{range}}\in\R^{n\times q},\, \Xcal_{\mathsf{corange}}\in\R^{q\times K},\, \Xcal_{\mathsf{core}}\in\R^{s\times s} \) that capture the range, corange, and core information of the data matrix \( \Xmat \), respectively~\cite{tropp2019Streaming}. The core information is essential to improve the estimate of the singular values and vectors. These sketches are formed as products of \( \Xmat \) with four fixed random reduction maps \( \boldsymbol{\Upsilon}\in \R^{q\times n},\, \boldsymbol{\Omega}\in\R^{q\times K},\, \boldsymbol{\Xi}\in\R^{s\times n},\, \boldsymbol{\Psi}\in\R^{s\times K} \):
\begin{equation*}
    \Xcal_{\mathsf{range}} = \Xmat \boldsymbol{\Omega}^\top, \qquad 
    \Xcal_{\mathsf{corange}} = \boldsymbol{\Upsilon} \Xmat, \qquad 
    \Xcal_{\mathsf{core}} = \boldsymbol{\Xi} \Xmat \boldsymbol{\Psi}^\top.
\end{equation*}
These randomized sketches preserve essential geometric and algebraic properties (norms, inner products, and low-rank structure) of the original data, enabling memory-efficient approximate SVD recovery with high probability~\cite{halko2011Findinga}. In this work, we employ \textit{sparse sign matrices} for the four random reduction maps to reduce the computational cost of matrix-vector and matrix-matrix multiplications when updating sketches with each new data stream (see~\cite{tropp2019Suplementary} for additional details). Note that other types of random matrices, such as Gaussian and subsampled randomized Fourier transform (SRFT) matrices, can also be used~\cite{tropp2019Streaming}.

As summarized in~\Cref{alg:sketchy}, SketchySVD processes each incoming snapshot \( \xvec_k \) by updating the three sketches via efficient sparse matrix-vector multiplications, and then the random sketches are used to compute an estimate of the \gls{svd} only after all snapshots have been processed.
To ensure a truncated SVD with sufficiently tight theoretical error bounds, the recommended sketch sizes are \( q = 4r+1 \) and \( s = 2q+1 \)~\cite{tropp2019Streaming}. Further details are available in~\cite{tropp2019Streaming}.

\subsubsection{Cost and Error Analysis}\label{sec:bg:isvd-costs}
To assess suitability for our proposed method, we analyze memory and computational costs as well as sources of error for streaming \gls{svd} algorithms relative to batch \gls{svd}, tabulated in~\Cref{tab:isvd-costs}.

Baker's method has space complexity \( O(nr) \) for storing \(\Vmat_r \), making it suitable for large-scale problems where \( n \gg r \) and \( r < K \). The per-iteration cost of \( O(nr) \) yields total complexity \( O(nKr) \), matching batch \gls*{svd}. However, error characteristics differ fundamentally. While batch \gls*{svd} error stems solely from truncating small singular values~\cite{eckart1936approximation}, Baker's \gls*{isvd} introduces incremental error accumulating with each update. Denoting the final \gls*{isvd} approximation as \( \widetilde{\Xmat}_{\mathsf{isvd}} = \widetilde{\Vmat}\widetilde{\Sigmamat}\widetilde{\Wmat}^\top \) corresponding to time step \( k = K \) versus the batch result \( \Xmat = \Vmat\Sigmamat\Wmat^\top \), there exists a bound \( \varepsilon > 0 \) satisfying \( \| \Xmat - \widetilde{\Xmat}_{\mathsf{isvd}} \|_2 \leq \varepsilon \)~\cite{fareed2020Error}. This bound implies singular value perturbations \( |\sigma_j - \widetilde{\sigma}_j| \leq \varepsilon \) between diagonal entries of \( \Sigmamat \) and \( \ot\Sigmamat \), while singular vector errors depend on spectral gaps between consecutive singular values. Notably, right singular vectors exhibit larger errors than left vectors~\cite[Thm.\,3.9]{fareed2020Error}:
\begin{equation}\label{eqn:isvd-singular-vector-errors}
    \| \vvec_j - \widetilde{\vvec}_j \|_2 \leq E_j^{1/2}, \qquad \| \wvec_j - \widetilde{\wvec}_j \|_2 \leq E_j^{1/2} + 2\sigma_j^{-1}\varepsilon_j,
\end{equation}
where \( \vvec_j, \wvec_j \) (resp.~\( \ot\vvec_j,\ot\wvec_j \)) denote the \(j\)-th left and right singular vectors of \( \Vmat,\Wmat \) (resp.~\( \ot\Vmat,\ot\Wmat \)), and \( E_j, \varepsilon_j > 0 \) depend on \( \varepsilon \), singular values, and their gaps. Errors in both vectors increase as \(\sigma_j \to 0\) due to diminishing spectral gaps, which can be mitigated by choosing smaller \( r \) to focus on dominant modes. For more details, see~\cite{fareed2020Error}.

SketchySVD memory requirements are dominated by the dense range sketch \( \Xcal_{\mathsf{range}} \) at \( O(nq) \), with smaller corange and core sketches requiring \( O(qK) \) and \( O(s^2) \). The sparse sign matrix draws each column of the reduction maps independently from a distribution where each entry is \( +1 \) or \( -1 \) with probability based on \( \zeta = \min \{q, 8\} \), a sparsity parameter controlling the number of nonzeros per column, adding \( O(n\zeta) \) memory~\cite{tropp2019Streaming}. Total memory of \( O(n(q+\zeta) + qK + s^2) \) remains well below the \( O(nK) \) needed for full data if \( q, s \ll n \) and \( q, s < K \). The per-iteration cost of \( O(n\zeta) \) for three sparse matrix-vector multiplications yields total complexity \( O(nK\zeta) \), proportional to the large-scale dimension \( n \). Unlike Baker's deterministic accumulation, SketchySVD errors are probabilistic. For sketch parameters \( q \) and \( s \), the expected Frobenius error between \( \Xmat \) and its approximation \( \widetilde{\Xmat}_{\mathsf{sketchy}} \) satisfies~\cite[Thm.\,5.1]{tropp2019Streaming}
\begin{equation}\label{eqn:sketchy-error-bound}
    \mathbb{E}\| \Xmat - \widetilde{\Xmat}_{\mathsf{sketchy}} \|_F^2 \leq \frac{s - 1}{s - q - 1} \cdot \min_{\varrho < q - 1} \frac{q + \varrho - 1}{q - \varrho - 1} \cdot \sum_{j > \varrho+1} \sigma_j^2,
\end{equation}
where the tail energy \( \sum_{j > \varrho+1} \sigma_j^2 \) reveals SketchySVD's ability to exploit spectral decay. That is, increasing \( q \) captures more spectrum, reducing tail energy. Similar to Baker's method, errors grow with smaller spectral gaps, particularly pronounced in turbulent and advection-dominant flows with slow spectral decay. Such problems require larger \( q \) and \( s \) for accuracy, increasing memory usage.

The choice between methods depends on memory constraints, accuracy requirements, and spectral decay rates. Baker's \gls*{isvd} offers minimal memory overhead, ideal for problems with fast spectral decay and modest datasets. SketchySVD trades increased memory for better scalability to large datasets, with tunable sketch sizes balancing accuracy and storage. Section~\ref{sec:experiments} demonstrates effective integration of both methods within our streaming \gls*{opinf} framework across different problem types.

\begin{table}[htbp!]
    \centering
    \caption{Summary of memory and computational costs as well as sources of errors of the standard batch SVD, Baker's iSVD, and SketchySVD algorithms for computing a rank-\( r \) truncated SVD of a data matrix \( \Xmat \in \R^{n\times K} \). Here, \( q \) and \( s \) are the sketch sizes in SketchySVD such that \( r < q \leq s \), and \( \zeta = \min \{q,8\} \) is the sparsity parameter for the sparse sign matrices used as reduction maps.}
    \vspace{2mm}
    \begin{tabular}{c c c l}
        \toprule
        \textbf{Algorithm} & Memory Cost & Time Cost & Source of Error \\
        \midrule
        Standard Batch SVD & \(O(n(K+r))\) & \(O(nKr)\) & Truncation error based on \( r \)~\cite{eckart1936approximation} \\
        Baker's iSVD~\cite{baker2012Lowrank} & \(O(nr)\) & \(O(nKr)\) & Truncation error and spectral gap~\cite{fareed2020Error} \\
        SketchySVD~\cite{tropp2019Streaming} & \(O(n(q+\zeta) )\) & \(O(nK\zeta)\) & Error bound based on \(r, q, s \)~\cite{tropp2019Streaming} \\
        \bottomrule
    \end{tabular}\label{tab:isvd-costs}
\end{table}

\subsection{Recursive Least-Squares Methods}\label{sec:bg:rls}

We introduce two \gls{rls} algorithms: standard \gls{rls} and a more numerically stable \gls{iqrrls} in~\Cref{sec:bg:standard-rls,sec:bg:iqrrls}, respectively.

\subsubsection{Standard RLS Algorithm}\label{sec:bg:standard-rls}
We consider a streaming setting in which the rows of the OpInf least squares problem become available sequentially. 
At each step \( k \), new streaming data arrives as \( (\dvec_k, \rvec_k) \), where \( \dvec_k \in \R^{1\times d} \) represents the final row from the current data matrix \( \Dmat_k = [\dvec_1; \dvec_2; \ldots; \dvec_k] \in \R^{k\times d} \) and \( \rvec_k \in \R^{1\times r} \) is the corresponding final row from \( \Rmat_k = [\rvec_1; \rvec_2; \ldots; \rvec_k] \in \R^{k\times r} \). The streams \( \dvec_k \) and \( \rvec_k \) are formed by reduced snapshots \( \xhat_k \) and \( \xhatdot_k \) as defined in~\eqref{eqn:opinf-main-matrices}. To develop the RLS formulation, consider the normal equation solution to~\eqref{eqn:opinf-frobenius}, which gives the operator solution \( \Omat_k \) at step \( k \):
\begin{equation*}
    \Omat_k = \Pmat_k \Dmat^\top_k \Rmat_k \quad \text{ where } \quad  \Pmat_k = {(\Dmat^\top_k \Dmat_k + \Gammamat)}^{-1},
\end{equation*}
and \( \Pmat_k \in \R^{d\times d} \) is the \emph{inverse correlation matrix} at step \(k\). In RLS, the inverse \( \Pmat_k^{-1} \) is updated recursively as
\begin{equation*}
    \Pmat_k^{-1} = \Pmat_{k-1}^{-1} + \dvec_k^\top\dvec_k = \Pmat_0^{-1} + \sum_{i=1}^k \dvec_i^\top \dvec_i, \quad k = 1,2,\ldots,K,
\end{equation*}
initialized with \( \Pmat_0^{-1} = \Gammamat \). The inclusion of the Tikhonov regularization term \( \Gammamat \) is critical, since the RLS algorithm consists of many rank-1 updates that are prone to ill-conditioning without regularization.

The key insight of RLS is to efficiently update the inverse correlation matrix \( \Pmat_k \) using the Sherman-Morrison identity, which enables rank-1 updates without recomputing the entire matrix inverse~\cite{sayed2011Adaptive}. This approach transforms the \( O(d^3) \) matrix inversion into \( O(d^2) \) operations per iteration. The RLS algorithm hence becomes an update of the \gls*{ls} solution by adding a correction term based on the prediction error:
\begin{equation}\label{eqn:rls-update}
    \Omat_k = \Omat_{k-1} + \gvec_k \boldsymbol{\xi}_k^-,
\end{equation}
where \( \boldsymbol{\xi}_k^- = \rvec_k - \dvec_k \Omat_{k-1} \in \R^{1\times r} \) is the \textit{a priori} prediction error and \( \gvec_k = \Pmat_{k-1} \dvec_k^\top c_k \in \R^{d} \) is called the Kalman gain vector with conversion factor \( c_k > 0 \)~\cite{sayed2011Adaptive}. Both the conversion factor and the inverse correlation matrix are also updated recursively using the new data.
\gls*{rls} is initialized with \( \Omat_0 = \0_{d\times r} \) and repeats the update~\eqref{eqn:rls-update} as well as the updates for \( c_k \) and \( \Pmat_k \) for each new data stream \( (\dvec_k, \rvec_k) \) until all \( K \) data points are processed to yield the final operator estimate \( \Omat_K \). The \gls*{rls} algorithm is summarized in~\Cref{alg:rls}.

\subsubsection{Inverse QR-Decomposition RLS Algorithm}\label{sec:bg:iqrrls}
While the standard \gls*{rls}~\cref{alg:rls} is effective for recursive parameter estimation, it suffers from numerical instabilities when updating the inverse correlation matrix \( \Pmat_k \) in finite-precision arithmetic. These instabilities arise because the update involves subtracting nearly equal positive definite matrices, causing catastrophic cancellation~\cite{sayed2011Adaptive}. QR-decomposition RLS algorithms address this by organizing iterated quantities and incoming data into arrays that are factorized using QR decomposition to update variables in a numerically stable manner.

These algorithms rely on transformations \( \Acal_k \Thetamat_k = \Bcal_k \), where \( \Acal_k \) is the pre-array containing previous quantities and new data, \( \Thetamat_k \) is a unitary matrix, and \( \Bcal_k \) is the triangular post-array containing updated quantities. This is essentially a QR decomposition of \( \Acal_k^\top \). Two main variants exist: \gls{iqrrls} and \gls{qrrls}, which differ in whether they update the inverse correlation matrix \( \Pmat_k \) or the correlation matrix \( \sum_{i=1}^k\dvec_i^\top\dvec_i \)~\cite{sayed2011Adaptive,haykin2014Adaptive}.

The \gls{iqrrls} algorithm propagates the square root of the inverse correlation matrix \( \Pmat_k^{1/2} \) rather than \( \Pmat_k \) to ensure positive definiteness, using the Cholesky decomposition \( \Pmat_k = \Pmat_k^{1/2} (\Pmat_k^{1/2})\tran \) where \( \Pmat_k^{1/2} \) is lower triangular. The algorithm organizes the transposed square root \( \Pmat_{k-1}^{\top/2} := (\Pmat_{k-1}^{1/2})\tran \) and incoming data into the pre-array 
\begin{equation*}
    \Acal_k^\top = \begin{bmatrix}
        1 & \mathbf{0}_{1 \times d} \\[0.3em]
        \Pmat_{k-1}^{\top/2} \dvec_k^\top & \Pmat_{k-1}^{\top/2}
    \end{bmatrix} \in \R^{(d+1) \times (d+1)},
\end{equation*} 
then performs QR decomposition to obtain the post-array from which updated quantities can be extracted. We summarize \gls*{iqrrls} in~\Cref{alg:iqrrls}.

\subsubsection{Cost Analysis}\label{sec:bg:rls-costs}
To assess the computational efficiency of \gls{rls} methods for streaming \gls*{opinf}, we present their memory and computational costs, comparing them to standard batch \gls*{ls} in~\Cref{tab:rls-costs}. A key advantage of \gls*{rls} methods is their constant memory requirement of \( O(d^2) \), dominated by storing the inverse correlation matrix \( \Pmat_k \in \R^{d \times d} \), compared to the \( O(d(K+d)) \) storage required by batch \gls*{ls} for the concatenated data matrix \( \bbar{\Dmat} \). In the standard \gls*{rls} algorithm, each iteration requires \( O(d^2) \) operations for updating \( \Pmat_k \) and computing the Kalman gain, yielding a total cost of \( O(Kd^2) \) for processing \( K \) data points, which is comparable to the \( O(d^2\max \{K,d\} ) \) cost of batch \gls*{ls} solutions~\cite{sayed2011Adaptive}. Unlike batch methods, \gls*{rls} does not require storing the entire data history, making it well-suited for streaming applications.

The \gls*{iqrrls} algorithm addresses numerical stability by propagating the Cholesky factor \( \Pmat_k^{\top/2} \) (an upper triangular matrix) instead of the full inverse correlation matrix. At each iteration, \gls*{iqrrls} performs a QR decomposition of a \( (d+1) \times (d+1) \) pre-array matrix \( \Acal_k^\top \). A na\"{\i}ve implementation would require \( O(d^3) \) operations per iteration. However, because \( \Pmat_k^{\top/2} \) is upper triangular, \( \Acal_k^\top \) has a nearly upper triangular structure, allowing Givens rotations to be applied selectively to only the first column. This reduces the computational complexity to \( O(d^2) \) per iteration~\cite{sayed2011Adaptive}, matching standard \gls*{rls} asymptotically. For dimensions below 20, the computational difference is negligible, but Givens rotations become essential for larger problems. 
Moreover, the memory cost of \gls*{iqrrls} can be half of the standard \gls*{rls} if we opt to store only the upper triangular elements of \( \Pmat_k^{\top/2} \).

The \gls*{qrrls} algorithm similarly addresses numerical instability by propagating the Cholesky factor of the correlation matrix \( \sum_{i=1}^k\dvec_k^\top\dvec_k \) instead of its inverse. However, a key difference is that QRRLS requires an additional \gls*{ls} solve at each iteration after extracting the updated quantities from the post-array to compute the operator estimate \( \Omat_k  \). This incurs an additional cost per iteration, and thus, we consider iQRRLS as our primary array-based \gls*{rls} method in this work. For more details on both iQRRLS and QRRLS, see~\cite{sayed2011Adaptive}.

\begin{table}[htbp!]
    \centering
    \parbox{0.95\textwidth}{\caption{Summary of memory and computational costs of standard \gls*{ls}, standard RLS, and iQRRLS algorithms for solving the regularized \gls*{ls} problem~\eqref{eqn:opinf-frobenius} with \( K \) data points and \( d \) features.}\label{tab:rls-costs}}\\
    \vspace{2mm}
    \begin{tabular}{c c c}
        \toprule
        \textbf{Algorithm} & Memory Cost & Time Cost \\
        \midrule
        Standard Batch LS & \(O(d(K+d))\) & \(O(d^2\max \{K,d\})\) \\
        Standard RLS~\cite{sayed2011Adaptive} & \(O(d^2)\) & \(O(Kd^2)\) \\
        iQRRLS~\cite{sayed2011Adaptive} & \(O(d^2)\) & \(O(Kd^2)\) \\
        \bottomrule
    \end{tabular}
\end{table}

\section{Streaming Operator Inference for Large-Scale Data}\label{sec:stream}

In this section, we present Streaming \gls*{opinf}, a reduced operator learning method that extends the OpInf framework to the data-streaming setting. In~\Cref{sec:stream:formulation}, we detail the mathematical reformulation of the learning step for Streaming \gls*{opinf}. In~\Cref{sec:stream:selection}, we provide guidance on selecting appropriate algorithmic components based on problem characteristics. In~\Cref{sec:stream:error}, we analyze the sources of error in Streaming \gls*{opinf}. Finally, in~\Cref{sec:stream:practical}, we provide practical implementation details.

\subsection{Streaming Reformulation of the Learning Step}\label{sec:stream:formulation}

Adapting the standard \gls*{opinf} framework~\eqref{eqn:opinf-frobenius} to the data-streaming setting requires replacing the batch \gls*{svd} and \gls*{ls} computations with their streaming counterparts from~\Cref{sec:bg:isvd,sec:bg:rls}. A straightforward approach would be to first compute the \gls*{pod} basis via \gls*{isvd}, then make an additional pass through the data to project each snapshot and its time derivative onto this basis, and finally solve for the operators using either batch \gls*{ls} or \gls*{rls}. We refer to this as the \emph{projection} approach, which yields the \emph{iSVD-Project-LS} and \emph{iSVD-Project-RLS} paradigms. However, this projection step incurs a computational cost of \( O(nr) \) per snapshot---totaling \( O(nKr) \) for \( K \) snapshots---which can be prohibitive when \( n \) is extremely large.

To circumvent this additional data pass and its associated cost, we introduce a reformulation that expresses the \gls*{ls} data matrices directly in terms of the truncated \gls*{svd} matrices obtained from \gls*{isvd}, thereby avoiding explicit projection entirely. This \emph{reformulation} approach yields the \emph{iSVD-LS} and \emph{iSVD-RLS} paradigms. While this reformulation eliminates the \( O(nKr) \) projection cost and associated data pass, the projection-based paradigms offer their own advantages in terms of error control, as we elaborate in~\Cref{sec:stream:error:svd}.

Specifically, suppose streamed snapshots \( \Xmat_k = [\xvec_1, \ldots, \xvec_k] \in \R^{n \times k} \) have been processed by \gls*{isvd} up to time step \( k \), yielding a rank-\(r_k\) truncated \gls*{svd} approximation \( \Xmat_k \approx \Vmat_{r_k} \Sigmamat_{r_k} \Wmat_{r_k}^\top \) with \( r_k \leq r \). The reduced snapshot matrix can then be expressed as
\begin{equation}\label{eqn:svd-reduced-snapshot}
    \Xhat_k = \Vmat_{r_k}^\top \Xmat_k = \Sigmamat_{r_k} \Wmat_{r_k}^\top,
\end{equation}
where each row corresponds to a right singular vector scaled by its singular value. For quadratic terms, we employ properties of the Kronecker product~\cite{brewer1978Kronecker} and Khatri-Rao product~\cite{khatri1968Solutions,rao1970Estimation} to obtain
\begin{equation}\label{eqn:svd-reduced-quadratic}
    \begin{gathered}
    \Xmat_k \odot \Xmat_k \approx 
    (\Vmat_{r_k} \otimes \Vmat_{r_k}) (\Sigmamat_{r_k} \otimes \Sigmamat_{r_k}) (\Wmat_{r_k}\tran \odot \Wmat_{r_k}\tran)
    \implies 
    \Xhat_k \odot \Xhat_k = \Sigmamat_{r_k}\Wmat_{r_k}^\top \odot \Sigmamat_{r_k}\Wmat_{r_k}^\top,
    \end{gathered}
\end{equation}
where we use \( (\Vmat_{r_k} \otimes \Vmat_{r_k})\tran(\Vmat_{r_k}\otimes\Vmat_{r_k})=\Id_{r_k^2} \). 

Let \( (i,j) \) denote the matrix entry at row \( i \) and column \( j \). Using~\eqref{eqn:svd-reduced-snapshot} and~\eqref{eqn:svd-reduced-quadratic}, the data matrix \( \Dmat_k \) from~\eqref{eqn:opinf-main-matrices} can be reformulated solely in terms of the rank-\( r \) truncated matrices \( \Sigmamat_r = \Sigmamat_{r_k}(1:r,1:r) \) and \( \Wmat_r = \Wmat_{r_k}(:,1:r) \):
\begin{equation*}
    \Dmat_k = \begin{bmatrix}
        (\Sigmamat_r\Wmat_r^\top)\tran &
        (\Sigmamat_r\Wmat_r^\top \odot \Sigmamat_r\Wmat_r^\top)\tran &
        \Umat_k^\top & \mathbf{1}_k 
    \end{bmatrix}.
\end{equation*}

The right-hand side matrix \( \Rmat_k \) can similarly be expressed using \( \Sigmamat_r \) and \( \Wmat_r \) when time derivatives are approximated via finite differences. Denoting the finite difference operator matrix as \( \Deltamat_k \in \R^{k \times \tilde{k}} \), where \( \widetilde{k} \le k \) depends on the chosen scheme, the reduced time derivative matrix becomes \( \Xhatdot_k = \Sigmamat_r\Wmat_r^\top\Deltamat_k \), and thus \( \Rmat_k = \Deltamat_k^\top\Wmat_r\Sigmamat_r \). For example, forward differences with constant time step \( \delta t \) yield
\begin{equation*}
    \Deltamat_k = \frac{1}{\delta t}\begin{bmatrix}
        -1 & 0 & 0 & \cdots & 0 \\
        1 & -1 & 0 & \cdots & 0 \\
        0 & 1 & -1 & \cdots & 0 \\
        \vdots &  & \vdots & \ddots & \vdots \\
        0 & \cdots & 0 & 1 & -1 \\
        0 & \cdots & 0 & 0 & 1
    \end{bmatrix} \in \R^{k \times (k-1)}.
\end{equation*}

Since finite differencing can change the column dimension from \( k \) to \( \widetilde{k} \), a column selector matrix \( \Smat_k \in \R^{k \times \tilde{k}} \) extracts the appropriate indices from the snapshot data (e.g., indices \( 1 \) to \( k-1 \) for forward differences). The final formulation with Tikhonov regularization becomes
\begin{equation}\label{eqn:compact-data-matrices-regularized}
    \bbar{\Dmat}_k = \begin{bmatrix}
        (\Sigmamat_r\Wmat_r^\top\Smat_k)\tran & 
        (\Sigmamat_r\Wmat_r^\top\Smat_r \odot \Sigmamat_r\Wmat_r^\top\Smat_k)\tran &
        (\Umat_k\Smat_k)\tran & 
        \mathbf{1}_{\tilde{k}} \\
        \multicolumn{4}{c}{{\Gammamat}^{1/2}}
    \end{bmatrix}, \qquad 
    \bbar{\Rmat}_k = \begin{bmatrix} 
        (\Sigmamat_r\Wmat_r^\top\Deltamat_k)\tran \\
        \0_{d \times r}
    \end{bmatrix}.
\end{equation}

This reformulation offers two key advantages. First, it computes the data matrices \( \bbar{\Dmat}_k \) and \( \bbar{\Rmat}_k \) directly from the \gls*{isvd} output without storing more than one snapshot at a time. Second, it avoids explicit projection of \( \Xmat_k \) onto the \gls*{pod} basis, eliminating the \( O(nKr) \) computational overhead of the projection step. The operator solution \( \Omat_k = \bbar{\Dmat}_k^\dagger\bbar{\Rmat}_k \) can then be computed at any time step \( k \) using only the data processed thus far, enabling online operator learning with the potential for real-time predictions.

\subsection{Streaming Paradigms and Algorithm Selection}\label{sec:stream:selection}

As summarized in~\Cref{fig:streaming-opinf-flowchart} and~\Cref{tab:streaming-opinf-costs}, we thoroughly explore the design space of Streaming \gls*{opinf} and propose four distinct paradigms---\emph{iSVD-Project-LS}, \emph{iSVD-Project-RLS}, \emph{iSVD-LS}, and \emph{iSVD-RLS}---distinguished by two independent design choices: (i) whether time derivative data is available directly or must be approximated via finite differences, and (ii) whether to solve the \gls*{ls} problem in batch or recursively.

\begin{figure}[htbp!]
    \centering
    \includegraphics[width=\textwidth]{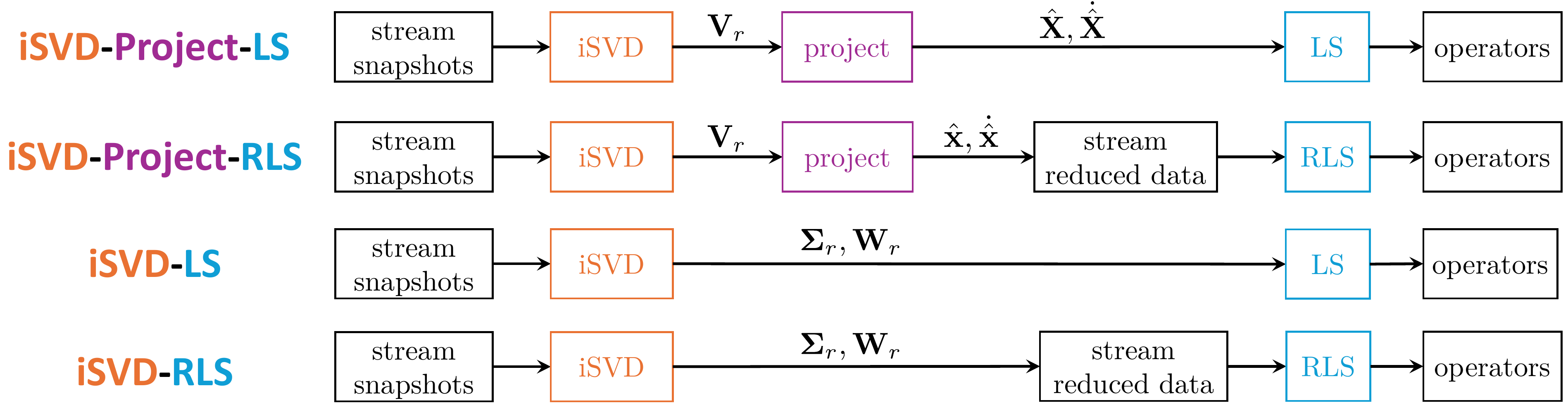}
    \vspace{-1.5em}
    \caption{Flowchart of each paradigm of Streaming OpInf showing different algorithmic components, selected based on time derivative data availability and memory/computational cost considerations.}\label{fig:streaming-opinf-flowchart} 
\end{figure}

The first design choice concerns how the reduced time derivative data is obtained. When time derivative data is unavailable or unreliable (e.g., corrupted by measurement noise), the reformulation~\eqref{eqn:compact-data-matrices-regularized} approximates derivatives via finite differences applied directly to the \gls*{isvd} output, yielding the iSVD-LS/RLS paradigms. This approach avoids the computational cost of projecting data onto the \gls*{pod} basis. Conversely, when high-quality time derivative data is available from simulations or experiments, it can be projected onto the \gls*{pod} basis after computing the basis via \gls*{isvd}. This projection approach, yielding the iSVD-Project-LS/RLS paradigms, avoids the potential inaccuracies of finite difference approximations and is generally preferred when accurate derivative data is accessible. However, it requires an additional pass through the data with a total cost of \( O(2nKr) \) for projecting both snapshots and derivatives. This cost is often manageable through parallelized batch processing, though it may be prohibitive for extremely large \( n \) or when data cannot be revisited.

The second design choice concerns how the linear \gls*{ls} problem is solved. In the iSVD-LS and iSVD-Project-LS paradigms, the operator is computed via batch \gls*{ls} using the data matrices from~\eqref{eqn:compact-data-matrices-regularized} or~\eqref{eqn:opinf-main-matrices}, respectively. For iSVD-LS, this approach enables computation of the operator at any intermediate time step \( k \), which may be valuable for applications requiring online operator estimates. Alternatively, the iSVD-RLS and iSVD-Project-RLS paradigms employ the \gls*{rls} methods from~\Cref{sec:bg:rls}.\ iSVD-RLS streams \( \xhat_k = \Sigmamat_r \Wmat_r^\top(:,k) \), with time derivatives approximated using finite differences over a sliding window while iSVD-Project-RLS projects streams \( \{ \xhat,\xhatdot \} \). Unlike the batch approaches, the \gls*{rls} paradigms yield the operator only after processing all \( K \) snapshots, since \gls*{rls} updates commence after \gls*{isvd} completion. However, this approach requires only \( O(d^2) \) additional memory compared to \( O(d(K+d)) \) for the batch paradigms (see~\Cref{tab:streaming-opinf-costs}), making it more memory-efficient and fully streaming without batch computations.

\begin{table}[htbp!]
    \centering
    \caption{Summary of memory and computational costs for Streaming OpInf paradigms. Here, \( n \) is the full system dimension, \( K \) is the number of snapshots, \( r \) is the reduced basis dimension, \( d = r + r^2 + m + 1 \) is the total operator dimension, \( m \) is the input dimension, \( r < q, s \) are SketchySVD sketch sizes, and \( \zeta = \min \{q,8\} \) is the sparsity parameter.}
    \vspace{2mm}
    \begin{tabular}{c c c c}
        \toprule
        \textbf{Streaming OpInf} & \textbf{iSVD Method} & \textbf{Memory Cost} & \textbf{Computational Cost} \\
        \midrule
        \multirow{2}{*}{\textbf{iSVD-LS}} 
        & Baker's iSVD & \(O(nr + d(K+d))\) & \(O(nKr + d^2\max \{d,K\})\) \\
        & SketchySVD & \(O(n(q+\zeta) + d(K + d))\) & \(O(nK\zeta + d^2\max \{d,K\})\) \\
        \midrule
        \multirow{2}{*}{\textbf{iSVD-RLS}}
        & Baker's iSVD & \(O(nr + d^2)\) & \(O(nKr + Kd^2)\) \\
        & SketchySVD & \(O(n(q+\zeta) + d^2)\) & \(O(nK\zeta + Kd^2)\) \\
        \midrule
        \multirow{2}{*}{\textbf{iSVD-Project-LS}}
        & Baker's iSVD & \(O(nr + d(K+d))\) & \(O(3nKr + d^2\max \{d,K\})\) \\
        & SketchySVD & \(O(n(q+\zeta) + d(K+d))\) & \(O(nK(\zeta + 2r) + d^2\max \{d, K\})\) \\
        \midrule
        \multirow{2}{*}{\textbf{iSVD-Project-RLS}}
        & Baker's iSVD & \(O(nr + d^2)\) & \(O(3nKr + Kd^2)\) \\
        & SketchySVD & \(O(n(q+\zeta) + d^2)\) & \(O(nK(\zeta + 2r) + Kd^2)\) \\
        \bottomrule
    \end{tabular}\label{tab:streaming-opinf-costs}
\end{table}

To summarize, practitioners should select among the four paradigms based on the following considerations: (i) use projection paradigms (iSVD-Project-LS/RLS) when accurate derivative data is available and the \( O(2nKr) \) projection cost is acceptable, and reformulation paradigms (iSVD-LS/RLS) otherwise; (ii) use the iSVD-LS paradigm when intermediate operator solutions at time \( k < K \) are needed for possibly real-time scenarios; and (iii) use RLS paradigms (iSVD-RLS or iSVD-Project-RLS) when memory is limited and \( K \) is large, as they require only \( O(d^2) \) additional storage. Additional practical considerations such as implementation complexity, numerical stability, and computational infrastructure may also influence the choice in specific application contexts. Moreover, other paradigms, for example, hybrid approaches which combine iSVD-LS/\gls*{rls} by computing a sufficiently accurate \gls*{pod} basis up to time \( k' < K \) using a subset of the data and then performing projection and \gls*{rls} updates for \( k > k' \), can also be devised within the Streaming \gls*{opinf} framework to suit particular needs.

\subsection{Sources of Error}\label{sec:stream:error}

Streaming \gls*{opinf} introduces approximation errors relative to batch \gls*{opinf} from two sources: (i) errors in the \gls*{pod} basis from streaming \gls*{svd} methods, and (ii) errors in the operator solution from using \gls*{rls} instead of batch \gls*{ls}. We analyze each source in this section.

\subsubsection{Errors from Streaming SVD}\label{sec:stream:error:svd}

Baker's \gls*{isvd} produces deterministic errors that accumulate with each truncation~\cite{fareed2020Error,baker2012Lowrank}, while SketchySVD produces probabilistic errors controlled by sketch parameters~\cite{tropp2019Streaming}. Both methods exhibit increased errors when spectral gaps diminish, as discussed in \Cref{sec:bg:isvd-costs}. These errors propagate to the learned operators through the \gls*{ls} data matrices. Specifically, the iSVD-LS/RLS paradigms construct data matrices~\eqref{eqn:compact-data-matrices-regularized} using right singular vectors \( \Wmat_r \) and singular values \( \Sigmamat_r \), which carry larger errors than left singular vectors according to~\eqref{eqn:isvd-singular-vector-errors}. In contrast, the iSVD-Project-LS/RLS paradigms project data onto the \gls*{pod} basis using left singular vectors \( \Vmat_r \), yielding lower operator error at additional computational cost. We formalize this error propagation in the following lemma and theorem proven in~\Cref{app:proof}. \Cref{lem:operator-perturbation} establishes a general perturbation bound for the learned operators in terms of perturbations to the data matrices.

\begin{lemma}[Operator perturbation bound]\label[lemma]{lem:operator-perturbation}
    Let \( \ot{\Dmat} = \bbar\Dmat + [\Emat_D^\top, \0_{d\times d}]\tran \) and \( \ot{\Rmat} = \bbar\Rmat + [\Emat_R^\top, \0_{r \times d}]\tran \) denote perturbed data matrices with perturbations \( \Emat_D \in \R^{K \times d} \) and \( \Emat_R \in \R^{K \times r} \). Assume \( \bbar\Dmat \) and \( \ot{\Dmat} \) both have full column rank \( d \). Then the operator error between solutions \( \Omat = \bbar\Dmat^\dagger\bbar\Rmat \) and \( \ot{\Omat} = \ot{\Dmat}^\dagger\ot{\Rmat} \) satisfies
    \begin{equation}\label{eqn:lem:operator-perturbation}
        \| \Omat - \ot{\Omat} \|_F \leq 
        \alpha \| \bbar\Dmat^\dagger \|_2 \| \ot\Dmat^\dagger \|_2 \| \Emat_D \|_2 \| \ot\Rmat \|_F +
        \| \bbar\Dmat^\dagger \|_2 \| \Emat_R \|_F,
    \end{equation}
    where \( \alpha = \sqrt{2} \) if \( K \neq d \) and \( \alpha = 1 \) if \( K = d \).
\end{lemma}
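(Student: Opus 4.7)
The plan is to bound $\|\Omat - \ot\Omat\|_F$ by decomposing it into a right-hand side perturbation piece and a pseudoinverse perturbation piece, and then apply a known perturbation theorem for pseudoinverses of full-column-rank matrices.

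First, I would add and subtract $\bbar\Dmat^\dagger \ot\Rmat$ to split the operator error as
\begin{equation*}
    \Omat - \ot\Omat = \bbar\Dmat^\dagger\bbar\Rmat - \ot\Dmat^\dagger\ot\Rmat = \bbar\Dmat^\dagger(\bbar\Rmat - \ot\Rmat) + (\bbar\Dmat^\dagger - \ot\Dmat^\dagger)\ot\Rmat.
\end{equation*}
Taking Frobenius norms and using submultiplicativity $\|AB\|_F \le \|A\|_2\|B\|_F$ reduces the problem to bounding the two factors $\|\bbar\Rmat - \ot\Rmat\|_F$ and $\|\bbar\Dmat^\dagger - \ot\Dmat^\dagger\|_2$. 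The first factor is immediate from the block structure of the perturbations: since the bottom $d \times r$ block of $\bbar\Rmat - \ot\Rmat$ is zero, $\|\bbar\Rmat - \ot\Rmat\|_F = \|\Emat_R\|_F$, which yields the second term of the desired bound.

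The second factor requires the classical Wedin perturbation theorem for pseudoinverses. Because $\bbar\Dmat$ and $\ot\Dmat$ both have full column rank $d$, we have $\bbar\Dmat^\dagger\bbar\Dmat = \ot\Dmat^\dagger\ot\Dmat = \Id_d$, so the standard three-term expansion of $\ot\Dmat^\dagger - \bbar\Dmat^\dagger$ collapses to two terms, giving
\begin{equation*}
    \|\bbar\Dmat^\dagger - \ot\Dmat^\dagger\|_2 \le \alpha\,\|\bbar\Dmat^\dagger\|_2\,\|\ot\Dmat^\dagger\|_2\,\|\bbar\Dmat - \ot\Dmat\|_2,
\end{equation*}
with $\alpha = \sqrt{2}$ in the tall case $K \ne d$ and $\alpha = 1$ in the square case $K = d$ (where the surviving term involving $(\Id - \ot\Dmat\ot\Dmat^\dagger)$ vanishes since $\ot\Dmat$ is then invertible). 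Again using the block structure $\bbar\Dmat - \ot\Dmat = -[\Emat_D^\top, \0_{d\times d}]^\top$, we conclude $\|\bbar\Dmat - \ot\Dmat\|_2 \le \|\Emat_D\|_2$. Multiplying by $\|\ot\Rmat\|_F$ from the decomposition yields the first term of the bound.

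The main technical obstacle is justifying the $\sqrt{2}$ versus $1$ dichotomy carefully. I would invoke Wedin's theorem as stated for example in Stewart and Sun, verifying the hypothesis that $\operatorname{rank}(\bbar\Dmat) = \operatorname{rank}(\ot\Dmat)$ (guaranteed here by the full column rank assumption) so that the acute perturbation condition is satisfied, and explicitly tracking which of the three canonical summands in the pseudoinverse expansion vanish under $\bbar\Dmat^\dagger\bbar\Dmat = \Id$ and, for the square case, under $\ot\Dmat\ot\Dmat^\dagger = \Id$. Everything else is a straightforward application of norm inequalities.
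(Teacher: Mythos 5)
Your proposal is correct and follows essentially the same route as the paper's proof: the same splitting $\Omat - \ot\Omat = \bbar\Dmat^\dagger(\bbar\Rmat - \ot\Rmat) + (\bbar\Dmat^\dagger - \ot\Dmat^\dagger)\ot\Rmat$, the same use of $\|\Amat\Bmat\|_F \le \|\Amat\|_2\|\Bmat\|_F$ and of the block structure to identify the perturbation norms with $\|\Emat_R\|_F$ and $\|\Emat_D\|_2$, and the same appeal to Wedin's pseudoinverse perturbation theorem for the $\alpha$ factor. The extra detail you give on which terms of the three-term Wedin expansion vanish is a welcome elaboration but not a departure from the paper's argument.
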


We now combine~\Cref{lem:operator-perturbation} with explicit bounds on the data matrix perturbations \( \Emat_D \) and \( \Emat_R \) arising from streaming \gls*{svd} errors to establish the following theorems, which bound the resulting operator errors for each paradigm.

\begin{theorem}[Operator error bounds: Projection paradigms]\label{thm:operator-error-projection}
    Let \( \bbar\Dmat \) and \( \bbar\Rmat \) be constructed using the exact rank-\( r \) \gls*{svd} of \( \Xmat \), and let \( \ot{\Dmat} \) and \( \ot{\Rmat} \) be constructed using approximate factors \( \ot{\Vmat}_r \), \( \ot{\Sigmamat}_r \), \( \ot{\Wmat}_r \) from streaming \gls*{svd} with the left singular vector error \( \tau_v = \| \Vmat_r - \ot{\Vmat}_r \|_F \). Assume the time-derivative data is computed via finite differences, \( \Xdot = \Xmat\Deltamat_K \). Furthermore, suppose \( \| \Xmat - \ot{\Xmat} \|_2 \leq \varepsilon \) and \( \Gammamat = \mathrm{diag}(\gamma_1, \ldots, \gamma_d) \) with \( \gamma_j > 0 \) for all \( j \), and let \( \sqrt{\gamma_{\min}} = \min_j \sqrt{\gamma_j} \). Then for the iSVD-Project-LS/RLS paradigms, the data matrix perturbations satisfy
    \begin{equation}\label{eqn:thm:perturbation-projection}
        \| \Emat_D \|_2 \leq \frac{\beta_1}{\sqrt{\min \{n,K\}}}\tau_v, \qquad 
        \| \Emat_R \|_F \leq \sigma_1 \|\Deltamat_K \|_2 \tau_v,
    \end{equation}
    and the operator error between the batch solution \( \Omat = \bbar\Dmat^\dagger\bbar\Rmat \) and the streaming solution \( \ot{\Omat} = \ot{\Dmat}^\dagger\ot{\Rmat} \) satisfies
    \begin{equation}\label{eqn:thm:operator-error-projection}
        \| \Omat - \ot{\Omat} \|_F \leq \frac{\sigma_1\|\Deltamat_K \|_2}{\sqrt{\gamma_{\min}}} \tau_v \Bigl(1 + \frac{\alpha\beta_1}{\sqrt{\gamma_{\min}}}\Bigr),
    \end{equation}
    where \( \beta_1 = 2 (\sigma_1^4 + \sigma_1^2 + \eta^2 + K)^{1/2}\sqrt{\min \{ n, K \}} \) with \( \eta = \| \Umat \|_2 \) and \( \sigma_1 \) is the largest singular value of \( \Xmat \).
\end{theorem}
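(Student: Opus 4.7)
The plan is to apply \Cref{lem:operator-perturbation} and to bound each of the five scalar quantities appearing on its right-hand side: $\|\bbar\Dmat^\dagger\|_2$, $\|\ot\Dmat^\dagger\|_2$, $\|\Emat_D\|_2$, $\|\ot\Rmat\|_F$, and $\|\Emat_R\|_F$. Three structural ingredients reduce the whole task to the single scalar $\tau_v$: Tikhonov regularization controls the pseudoinverse norms, the singular-value inequality $\|\xvec_k\|_2 \leq \sigma_1$ controls the reduced snapshots, and the identity $\Xdot = \Xmat\Deltamat_K$ propagates basis errors cleanly through the finite-difference operator.

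For the pseudoinverses, I observe that $\bbar\Dmat^\top\bbar\Dmat = \Dmat^\top\Dmat + \Gammamat$ with $\Gammamat \succeq \gamma_{\min}\Id_d$, so $\sigma_{\min}(\bbar\Dmat) \geq \sqrt{\gamma_{\min}}$ and hence $\|\bbar\Dmat^\dagger\|_2 \leq 1/\sqrt{\gamma_{\min}}$; the identical argument applied to $\ot\Dmat$ gives the same bound. This step is precisely where the positive-definiteness of $\Gammamat$ becomes essential, since without it the streaming data matrix may be nearly rank-deficient and the perturbation constants uncontrolled.

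Turning to the perturbation matrices, the projection structure gives $\Rmat - \ot\Rmat = \Deltamat_K^\top\Xmat^\top(\Vmat_r - \ot\Vmat_r)$, and two applications of the submultiplicativity $\|AB\|_F \leq \|A\|_2\|B\|_F$ together with $\|\Xmat\|_2 = \sigma_1$ yield the second bound in \eqref{eqn:thm:perturbation-projection}, namely $\|\Emat_R\|_F \leq \sigma_1\|\Deltamat_K\|_2\tau_v$. For $\|\Emat_D\|_2$, I use that the input and constant blocks of $\Dmat$ are unperturbed, leaving only the linear block $(\Xhat - \ot\Xhat)^\top$ and the quadratic block $(\Xhat\odot\Xhat - \ot\Xhat\odot\ot\Xhat)^\top$. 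The linear block is bounded by $\sigma_1\tau_v$ directly; for the quadratic block I apply the telescoping identity $a\otimes a - b\otimes b = (a-b)\otimes a + b\otimes(a-b)$ column-wise, exploiting $\|\ot\xhat_k\|_2,\|\xhat_k\|_2 \leq \|\xvec_k\|_2 \leq \sigma_1$ together with $\sum_k \|\xvec_k\|_2^2 = \|\Xmat\|_F^2 \leq \min\{n,K\}\sigma_1^2$ to obtain a Frobenius bound of order $\tau_v\sqrt{\min\{n,K\}}\,\sigma_1^2$. Packaging these blocks (with the $\eta^2$ and $K$ contributions absorbed from the $\Umat$ and $\mathbf{1}_K$ columns when comparing with $\ot\Dmat$) gives the stated $\|\Emat_D\|_2 \leq \beta_1\tau_v/\sqrt{\min\{n,K\}}$. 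Finally, the orthonormality of $\ot\Vmat_r$ gives $\|\ot\Rmat\|_F = \|\ot\Vmat_r^\top\Xmat\Deltamat_K\|_F \leq \|\Xmat\|_F\|\Deltamat_K\|_2 \leq \sqrt{\min\{n,K\}}\,\sigma_1\|\Deltamat_K\|_2$.

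Substituting all five bounds into \eqref{eqn:lem:operator-perturbation} and simplifying produces \eqref{eqn:thm:operator-error-projection}: the $\sqrt{\min\{n,K\}}$ in $\|\ot\Rmat\|_F$ cancels the $1/\sqrt{\min\{n,K\}}$ in the $\|\Emat_D\|_2$ bound, and factoring out the common prefactor $\sigma_1\|\Deltamat_K\|_2\tau_v/\sqrt{\gamma_{\min}}$ yields the parenthesized expression $1 + \alpha\beta_1/\sqrt{\gamma_{\min}}$. The main obstacle I anticipate is the careful bookkeeping in the quadratic block: the Kronecker telescoping must be applied so that the cross term involving $\ot\xhat_k$ is controlled by $\|\xvec_k\|_2$ rather than by some basis-dependent quantity, and the $\sqrt{\min\{n,K\}}$ factor must be tracked exactly so that it cancels in the final product rather than surviving as an explicit dimensional dependence in the operator error bound.
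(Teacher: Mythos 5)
Your overall strategy matches the paper's: invoke \Cref{lem:operator-perturbation}, bound the two pseudoinverse norms by \( \gamma_{\min}^{-1/2} \) through the Tikhonov block, bound \( \|\Emat_R\|_F \) and \( \|\ot\Rmat\|_F \) exactly as the paper does, and assemble. Your pseudoinverse step is in fact cleaner than the paper's, since \( \sigma_{\min}(\bbar\Dmat)^2 = \sigma_{\min}(\Dmat)^2 + \gamma_{\min} \ge \gamma_{\min} \) gives a rigorous inequality where the paper settles for an approximation.

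There is, however, a genuine quantitative gap in your bound on the quadratic block of \( \Emat_D \), and it is exactly the bookkeeping hazard you flag at the end. Your column-wise telescoping gives, for column \( k \), a bound \( 2\tau_v\sigma_1\|\xvec_k\|_2 \), and summing squares over \( k \) yields a Frobenius bound \( 2\tau_v\sigma_1\|\Xmat\|_F \le 2\tau_v\sigma_1^2\sqrt{\min\{n,K\}} \), which is what you state. But the claimed bound \( \|\Emat_D\|_2 \le \beta_1\tau_v/\sqrt{\min\{n,K\}} = 2(\sigma_1^4+\sigma_1^2+\eta^2+K)^{1/2}\tau_v \) requires the quadratic contribution to be of order \( \sigma_1^2\tau_v \) with no dimensional factor, so your route overshoots by \( \sqrt{\min\{n,K\}} \); that surplus does not cancel against \( \|\ot\Rmat\|_F \le \sigma_1\sqrt{\min\{n,K\}}\|\Deltamat_K\|_2 \) and survives into the final operator bound. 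The fix---which is what the paper does---is to keep the perturbation factored in the full space: write \( \Xhat\odot\Xhat = (\Vmat_r\otimes\Vmat_r)^\top(\Xmat\odot\Xmat) \) via the Kronecker/Khatri-Rao mixed-product property, so the quadratic perturbation becomes \( (\Vmat_r\otimes\Vmat_r - \ot\Vmat_r\otimes\ot\Vmat_r)^\top(\Xmat\odot\Xmat) \); telescoping the basis Kronecker product gives a spectral factor \( 2\tau_v \), and the identity \( (\Xmat\odot\Xmat)^\top(\Xmat\odot\Xmat) = (\Xmat^\top\Xmat)\circ(\Xmat^\top\Xmat) \) together with the Hadamard-product singular value inequality gives \( \|\Xmat\odot\Xmat\|_2 \le \sigma_1^2 \). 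This produces the spectral-norm bound \( 2\sigma_1^2\tau_v \) for the quadratic block (and, after bundling all blocks into a single block-diagonal projection, the stated \( \beta_1 \)), after which your final assembly goes through verbatim.
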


\begin{theorem}[Operator error bounds: Reformulation paradigms]\label{thm:operator-error-reformulation}
    Under the setup of Theorem~\ref{thm:operator-error-projection}, define the right singular vector error \( \tau_w = \| \Wmat_r - \ot{\Wmat}_r \|_F \). Then for the iSVD-LS/RLS paradigms, the data matrix perturbations satisfy
    \begin{equation}\label{eqn:thm:perturbation-reformulation}
        \| \Emat_D \|_2 \leq \frac{\sigma_1\beta_2}{\sqrt{r}(\sigma_1+\varepsilon)}\tau_w + \frac{\beta_2}{\sigma_1+\varepsilon}\varepsilon, \qquad
        \| \Emat_R \|_F \leq  \sigma_1\| \Deltamat_K \|_2 \tau_w + \sqrt{r}\| \Deltamat_K \|_2 \varepsilon,
    \end{equation}
    and the operator error satisfies
    \begin{equation}\label{eqn:thm:operator-error-reformulation}
        \| \Omat - \ot{\Omat} \|_F \leq 
        \frac{\sigma_1\| \Deltamat_K \|_2}{\sqrt{\gamma_{\min}}} \tau_w\Bigl(1 + \frac{\alpha\beta_2}{\sqrt{\gamma_{\min}}}\Bigr) + 
        \frac{(\alpha\beta_2 + \sqrt{\gamma_{\min}})\sqrt{r}\| \Deltamat_K \|_2}{\gamma_{\min}} \varepsilon,
    \end{equation}
    where  \( \beta_2  = \sqrt{r}(\sigma_1 + \varepsilon)\sqrt{1 + (2\sigma_1 + \varepsilon)^2} \) and \( \sigma_1 \) is the largest singular value of \( \Xmat \).
\end{theorem}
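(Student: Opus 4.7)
The plan is to follow the two-stage strategy used for \Cref{thm:operator-error-projection}: first establish the data-matrix perturbation bounds \eqref{eqn:thm:perturbation-reformulation}, then invoke \Cref{lem:operator-perturbation} to convert them into the operator error bound \eqref{eqn:thm:operator-error-reformulation}. The key new feature relative to the projection case is that the reformulated data matrices in \eqref{eqn:compact-data-matrices-regularized} depend on the approximate factors through the product $\ot\Sigmamat_r\ot\Wmat_r^\top$, so both the right-singular-vector error $\tau_w$ and the singular-value error---controlled by $\varepsilon$ through Weyl's inequality, $|\sigma_j-\widetilde\sigma_j|\leq\varepsilon$---contribute to the perturbation.

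First, I would bound the master quantity $\|\Sigmamat_r\Wmat_r^\top - \ot\Sigmamat_r\ot\Wmat_r^\top\|_F$ by splitting
\begin{equation*}
\Sigmamat_r\Wmat_r^\top - \ot\Sigmamat_r\ot\Wmat_r^\top = \Sigmamat_r(\Wmat_r - \ot\Wmat_r)^\top + (\Sigmamat_r - \ot\Sigmamat_r)\ot\Wmat_r^\top,
\end{equation*}
and applying submultiplicativity, Weyl's inequality (which gives $\|\Sigmamat_r-\ot\Sigmamat_r\|_F\leq\sqrt{r}\,\varepsilon$), and orthonormality of $\ot\Wmat_r$ to obtain $\|\Sigmamat_r\Wmat_r^\top - \ot\Sigmamat_r\ot\Wmat_r^\top\|_F \leq \sigma_1\tau_w + \sqrt{r}\,\varepsilon$. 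The bound on $\|\Emat_R\|_F$ in \eqref{eqn:thm:perturbation-reformulation} then follows by right-multiplying this difference by $\Deltamat_K$ and extracting $\|\Deltamat_K\|_2$.

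Next, I would decompose $\Emat_D$ block by block using the layout in \eqref{eqn:compact-data-matrices-regularized}. The input block $(\Umat_k\Smat_k)^\top$ and the constant block $\mathbf{1}_{\widetilde k}$ are independent of the SVD factors and hence vanish. The linear block contributes the master quantity directly. For the quadratic block, I would use the Khatri--Rao difference identity $\mathbf{A}\odot\mathbf{A} - \mathbf{B}\odot\mathbf{B} = \mathbf{A}\odot(\mathbf{A}-\mathbf{B}) + (\mathbf{A}-\mathbf{B})\odot\mathbf{B}$, combined with the column-wise inequality $\|\mathbf{A}\odot\mathbf{B}\|_F \leq \|\mathbf{A}\|_2\|\mathbf{B}\|_F$ and the spectral-norm estimates $\|\Sigmamat_r\Wmat_r^\top\|_2 = \sigma_1$ and $\|\ot\Sigmamat_r\ot\Wmat_r^\top\|_2 \leq \sigma_1+\varepsilon$ (again via Weyl), producing a factor of $(2\sigma_1+\varepsilon)$ multiplying the master quantity. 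Combining the linear and quadratic contributions in quadrature gives $\|\Emat_D\|_2 \leq \|\Emat_D\|_F \leq (\sigma_1\tau_w+\sqrt{r}\,\varepsilon)\sqrt{1+(2\sigma_1+\varepsilon)^2}$, which matches \eqref{eqn:thm:perturbation-reformulation} with $\beta_2 = \sqrt{r}(\sigma_1+\varepsilon)\sqrt{1+(2\sigma_1+\varepsilon)^2}$ after factoring out $(\sigma_1+\varepsilon)$ and $\sqrt{r}$.

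The final step is to substitute these bounds into \Cref{lem:operator-perturbation}. The regularization block controls the pseudoinverse norms via $\|\bbar\Dmat^\dagger\|_2,\|\ot\Dmat^\dagger\|_2\leq 1/\sqrt{\gamma_{\min}}$, and $\|\ot\Rmat\|_F \leq \sqrt{r}(\sigma_1+\varepsilon)\|\Deltamat_K\|_2$ follows from Weyl applied to $\|\ot\Sigmamat_r\|_F$. Collecting the $\tau_w$- and $\varepsilon$-contributions then produces \eqref{eqn:thm:operator-error-reformulation}. The main obstacle will be the quadratic block: obtaining the precise constants $(2\sigma_1+\varepsilon)$ and $\sqrt{1+(2\sigma_1+\varepsilon)^2}$ requires careful bookkeeping of which norm ($F$ versus spectral) to apply to each Khatri--Rao factor so that the bound stays tight. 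The added $\varepsilon$-dependence inside $\beta_2$, absent from $\beta_1$, is precisely the footprint of the singular-value error that the projection paradigm avoids but the reformulation paradigm inherits through $\ot\Sigmamat_r$.
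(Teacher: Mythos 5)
Your proposal is correct and follows essentially the same route as the paper's proof: the same splitting of \( \Sigmamat_r\Wmat_r^\top - \ot\Sigmamat_r\ot\Wmat_r^\top \) into a \( \tau_w \)-term and a Weyl-controlled \( \sqrt{r}\,\varepsilon \)-term, the same telescoping of the quadratic block to produce the \( (2\sigma_1+\varepsilon) \) factor, the same quadrature combination giving \( \sqrt{1+(2\sigma_1+\varepsilon)^2} \), the same bound \( \|\ot\Sigmamat_r\|_F\le\sqrt{r}(\sigma_1+\varepsilon) \) for \( \|\ot\Rmat\|_F \), and the same substitution into \Cref{lem:operator-perturbation} with \( \|\bbar\Dmat^\dagger\|_2,\|\ot\Dmat^\dagger\|_2\approx\gamma_{\min}^{-1/2} \). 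The only cosmetic difference is that you telescope the quadratic block via the Khatri--Rao identity with \( \|\Amat\odot\Bmat\|_F\le\|\Amat\|_2\|\Bmat\|_F \), whereas the paper writes the block as a Kronecker-product difference and invokes \( \|\Amat\otimes\Bmat\|_2=\|\Amat\|_2\|\Bmat\|_2 \); both yield the identical constants.
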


These theoretical results yield two key observations. First, when \( \tau_w > \tau_v \), as is the case for \gls*{isvd} algorithms satisfying~\eqref{eqn:isvd-singular-vector-errors}, the iSVD-Project-LS/RLS paradigms can yield lower operator error than iSVD-LS/RLS\@. This follows because the bound~\eqref{eqn:thm:operator-error-reformulation} depends on \( \tau_w \) and includes additional terms in \( \varepsilon \), whereas the bound~\eqref{eqn:thm:operator-error-projection} depends only on \( \tau_v \). Thus, projection paradigms are preferable when operator accuracy is critical and the additional computational cost and data passes are acceptable.

Second, when the spectral decay is slow (e.g., the turbulent channel flow example in \Cref{sec:exp:channel}), the errors \( \tau_w \), \( \tau_v \), and \( \varepsilon \) increase due to an insufficient reduced rank \( r \) to capture the dominant modes. In such cases, increasing the regularization parameter \( \gamma_{\min} \) can mitigate operator error by reducing sensitivity to these perturbations. 

\subsubsection{Errors from Recursive Least Squares}\label{sec:stream:error:rls}

When using \gls*{rls} instead of batch \gls*{ls}, an additional source of error arises from the recursive updates. We define the \emph{streaming operator error} (SOE) as the difference between the \gls*{rls} solution \( \Omat_k \) at time step \( k \) and the batch \gls*{ls} solution \( \Omat \) obtained using all available data:
\begin{equation}\label{eqn:streaming-error}
    \mathrm{SOE}(k) = \Omat - \Omat_k = \mathrm{SOE}(k-1) - \gvec_k \, \boldsymbol{\xi}_k^-,
\end{equation}
following a recursive update analogous to~\eqref{eqn:rls-update}. Assessing this error is important when considering \gls*{rls} over batch \gls*{ls}, as it reveals the accuracy trade-offs introduced by the recursive updates.

The convergence of \( \mathrm{SOE}(k) \to 0 \) as \( k \to K \) indicates that the \gls*{rls} algorithm effectively approximates the batch solution as more data is processed. In noise-free scenarios, \( \mathrm{SOE}(k) \) converges to zero exactly. In the presence of noise, \( \mathrm{SOE}(k) \) may not vanish but should decrease in a mean-square sense, reflecting the algorithm's ability to learn the underlying operators despite measurement noise~\cite[p.\,21]{ljung1987Theory}.

Overall, the total operator error in the iSVD-LS and iSVD-Project-LS frameworks is due solely to errors from streaming \gls*{svd} defined in~\Cref{thm:operator-error-projection,thm:operator-error-reformulation}. In contrast, the total operator error in iSVD-RLS and iSVD-Project-RLS frameworks is a combination of operator errors in~\Cref{thm:operator-error-projection,thm:operator-error-reformulation} and errors from \gls*{rls} updates~\eqref{eqn:streaming-error}, in which the \gls*{rls} solution approaches \( \ot\Omat \) solved using matrices from streaming \gls*{svd} with batch \gls*{ls}. In~\Cref{sec:experiments}, we assess the streaming operator error to evaluate the convergence behavior of the \gls*{rls} algorithms in practice.

\subsection{Practical Considerations}\label{sec:stream:practical}

The operator inference problem is formulated as a linear \gls*{ls} problem where conditioning critically affects operator quality and stability. Linear and polynomial state dependencies due to small time step \(\delta t\) yield high condition numbers, compounded by noise from numerical time-derivative approximation, model misspecification, and unresolved dynamics from \gls*{pod} truncation~\cite{geelen2022Localized}.

To improve conditioning, we recommend preprocessing of snapshot data: mean-centering ensures the \gls*{pod} basis captures dominant variation modes~\cite{taira2017modal}, while min-max normalization (e.g., scaling to \([0, 1]\) or \([-1, 1]\)) prevents bias toward larger-magnitude variables in multi-physics systems~\cite{mcquarrie2021Datadriven,kramer2024Learning}. This is crucial for turbulent channel flow (Section~\ref{sec:exp:channel}), where streamwise velocity dominates. When making predictions, these transformations are inverted to return solutions in physical coordinates. Computing mean and min-max values requires an additional data pass, efficiently handled via batch processing and parallelization.

The initialization of \( \Pmat_0 \) (see~\Cref{alg:rls,alg:iqrrls}) avoids impractical initialization of \( \Dmat_k^\top\Dmat_k \) as \(\infty \cdot \Id_d \) at \( k = 0 \), which suppresses the solution to zero~\cite[p.\,495]{sayed2011Adaptive}. This initialization also provides regularization, with the common choice \( \Pmat_0 = \gamma^{-1} \Id_d \) for small \( \gamma > 0 \) corresponding to Tikhonov regularization~\cite[p.\,20]{ljung1987Theory}. We use \( \gamma = 1\times 10^{-9} \), which yielded stable models in our benchmark experiments. Since \gls*{rls} algorithms suffer from ill-conditioning of the inverse correlation matrix with many rank-1 updates (or equivalently, small window sizes), this regularization is crucial for numerical stability~\cite{stotsky2025recursive}.

For large-scale problems, optimal regularization balancing training error and long-time stability is critical. Following~\cite{mcquarrie2021Datadriven}, we employ \( \Gammamat = \mathrm{diag}(\gamma_1 \Id_{d_1}, \gamma_2 \Id_{d_2}) \), where \( \gamma_1 > 0 \) acts on linear and constant terms and \( \gamma_2 > 0 \) on quadratic terms, with \( d_1 = r + m + 1 \), \( d_2 = r^2 \), and \( d = d_1 + d_2 \). We select \( \gamma_1, \gamma_2 \) via grid search, evaluating state prediction error of the inferred model over a validation dataset for each parameter pair.
\section{Numerical Experiments}\label{sec:experiments}

In this section, we present numerical experiments to evaluate the performance of the proposed Streaming OpInf method across different Streaming \gls*{opinf} paradigms as defined in~\Cref{sec:stream:selection}. We consider three distinct problems: (i) one-dimensional viscous Burgers' Equation (\Cref{sec:exp:burgers}), (ii) \gls*{kse} (\Cref{sec:exp:kse}), and (iii) three-dimensional turbulent channel flow (\Cref{sec:exp:channel}).

The first two experiments are conducted on a desktop machine with 128GB of memory. The iSVD phase of the channel flow problem was performed on the National Laboratory of the Rockies (NLR) high-performance computing cluster, Kestrel, using a single node with 300GB of allocated memory, while the LS phase was performed on the desktop machine. The experiment codes are publicly available for reproducibility\footnote{Experiment code: \href{https://github.com/smallpondtom/StreamingOpInf}{ https://github.com/smallpondtom/StreamingOpInf}}.

\subsection{Viscous Burgers' Equation}\label{sec:exp:burgers}

Our first numerical experiment uses a benchmark model reduction problem based on the one-dimensional viscous Burgers' equation, a simplified fluid dynamics model describing the motion of a one-dimensional viscous fluid~\cite{burgers1948mathematical,peherstorferW2016,peherstorfer2020Sampling,koike2024Energy}. Despite this problem being a small example which can be handled by standard batch \gls*{opinf}, it serves as a useful test case to validate the proposed method and analyze its performance with different paradigms and algorithmic choices in Streaming \gls*{opinf}. We present the problem setup, evaluation metrics, and results with discussions in~\Cref{sec:exp:burgers:setup,sec:exp:burgers:metrics,sec:exp:burgers:results}, respectively.

\subsubsection{Problem Setup}\label{sec:exp:burgers:setup}

The governing \gls*{pde} is
\begin{equation}\label{eqn:burgers}
    \frac{\partial }{\partial t}x(\omega,t) = \mu \frac{\partial^2 }{\partial \omega^2}x(\omega, t) - x(\omega,t)\frac{\partial }{\partial \omega}x(\omega,t),
\end{equation}
where \( x(\omega,t) \) represents the fluid velocity field, \( t \) denotes time, \( \omega \) is the spatial coordinate, and \( \mu > 0 \) is the kinematic viscosity. This equation emerges from the Navier-Stokes equations through specific simplifications: restriction to one-dimensional flow and omission of pressure gradient effects. The first term represents viscous diffusion, while the second term captures nonlinear advective effects.
Unlike the non-parametric formulation in~\cref{eqn:poly-sys}, we consider a parametric model reduction problem from the setup in~\cite{peherstorferW2016} where the viscosity parameter \( \mu \) varies across a prescribed range. After spatial discretization, the resulting full model takes the form:
\begin{equation*}
    \xdot(t;\mu) = \Amat_1(\mu)\xvec(t;\mu) + \Amat_2(\mu)(\xvec(t;\mu) \otimes \xvec(t;\mu)) + \Bmat(\mu)u(t),
\end{equation*}
where \( \xvec(t;\mu) \in \mathbb{R}^n \) represents the state vector, \( \Amat_1(\mu) \in \mathbb{R}^{n \times n} \) is the linear operator matrix, \( \Amat_2(\mu) \in \mathbb{R}^{n \times n^2} \) is the quadratic operator matrix, and \( \Bmat(\mu) \in \mathbb{R}^{n \times 1} \) is the input operator matrix for a specific viscosity parameter \( \mu \).

We consider the spatial domain \( \omega \in [0,1] \) with Dirichlet boundary conditions controlled by the input: \( x(0,t) = u(t) \) and \( x(1,t) = -u(t) \). The initial condition is specified as \( x(\omega,0) = 0.1\sin(2\pi\omega) \), and the simulation time extends from \( t = 0 \) to \( t = 1.0 \). The viscosity parameter is set to \( \mu_i \in \{0.1, 0.2, \ldots, 1.0\} \) with \( i = 1, \ldots, M \) and \( M = 10 \) total parameter values. Spatial discretization employs \( n = 128 \) grid points using second-order central differences for the diffusion term and leap-frog scheme for the advection term. Time integration is performed using the semi-implicit Euler method with time step \( \delta t = 1 \times 10^{-4} \), where the linear diffusion term is treated implicitly and the nonlinear advection and control terms are handled explicitly.

For training data generation, we sample 10 parameter values \( \mu_i \) from the parameter space. At each parameter value, we generate 10 independent trajectories with control inputs \( u(t) \) drawn uniformly from \( [0,1] \). Pooling the time-series data from these trajectories yields \( K = 10,001 \) snapshots per parameter value (including initial conditions). This results in a total training dataset of \( 10K = 100,010 \) snapshots across all parameter values. For the \gls*{isvd} phase, we compute the \gls*{svd} for the entire parameter space comprising these \(10K\) snapshots. We investigate reduced dimensions \( r \in \{1, 2, \ldots, 14\} \). To benchmark the Streaming OpInf method, we consider iSVD-LS and iSVD-RLS paradigms, where the time derivative data is approximated using first-order finite differences.

Training models are evaluated using a constant reference input \( u(t) = 1.0 \) for \( t \in [0,1] \). We learn one set of reduced operators per training parameter value \( \mu_i \). For new parameter values within the training range, we obtain the corresponding operators via element-wise B-spline interpolation (degree three) of the learned operator coefficients~\cite{degroote2010Interpolation}. For generalization testing, we generate five new sets of reduced operators with five parameter values uniformly sampled within \( [0.1, 1.0] \) and evaluate them on the same temporal grid with reference input \( u(t) = 1.0 \). We compute the \gls*{svd} using both Baker's \gls*{isvd} and SketchySVD methods for comparison, with the \gls*{svd} components from SketchySVD used in the subsequent \gls*{ls} phase due to better performance in the \gls*{isvd} evaluations shown in the subsequent sections. Note that in the \gls*{ls} phase, regularization is \( \gamma = 1\times 10^{-9} \) in both batch and streaming methods.

\subsubsection{Evaluation Metrics}\label{sec:exp:burgers:metrics}

\paragraph{iSVD Method Assessment}We measure the alignment between \gls*{pod} bases computed from full \gls*{svd} and \gls*{isvd} methods using the subspace angle error metric:
\begin{equation}\label{eqn:subspace-angles}
    \text{Subspace Angle Error} = \frac{1}{\sqrt{2}}\|\Vmat_{\text{batch}}\Vmat_{\text{batch}}^\top - \Vmat_{\text{isvd}}\Vmat_{\text{isvd}}^\top \|_F = \|{(\mathbf{I} - \cos\theta)}^{1/2}\|_F,
\end{equation}
where \( \Vmat_{\text{batch}} \in \mathbb{R}^{n\times r} \) is the \gls*{pod} basis from full \gls*{svd}, \( \Vmat_{\text{isvd}} \in \mathbb{R}^{n \times r} \) is the \gls*{pod} basis from \gls*{isvd}, and \( \cos\theta = \mathsf{diag}(\sigma(\Vmat_{\text{batch}}^\top\Vmat_{\text{isvd}})) \) with \( \sigma(\cdot) \) representing the singular values. Smaller values indicate better subspace alignment~\cite{bjorck1973Numerical}. 
Additionally, we assess the approximation quality solely based on the POD basis using the relative projection error:
\begin{equation}\label{eqn:rpe}
    \text{Relative Projection Error} = \frac{\|\Xmat - \Vmat_r\Vmat_r^\top\Xmat \|_F}{\| \Xmat \|_F},
\end{equation}
where \( \Xmat \) is the snapshot matrix and \( \Vmat_r \) is the rank-\( r \) \gls*{pod} basis.

\paragraph{Streaming Operator Learning Assessment}We monitor the operator convergence through the mean relative streaming operator error (MR-SOE) at stream \( k \):
\begin{equation}\label{eqn:mrsoe}
    \text{MR-SOE}(k,r) = \frac{1}{M} \sum_{i=1}^M \frac{\|\Omat(\mu_i) - \Omat_k(\mu_i)\|_F}{dr\| \Omat(\mu_i) \|_F},
\end{equation}
where this quantity is averaged over all parameters, \( \Omat_k(\mu_i) \) contains all reduced operators obtained via RLS at stream \( k \) with \gls*{pod} basis of rank-\( r \), and the normalization factor \( dr \) accounts for the total number of operator elements. As discussed in~\Cref{sec:stream:error}, this metric provides insight into the convergence behavior of streaming operator estimates. 

We further evaluate the streaming reconstruction accuracy using the mean relative streaming state error (MR-SSE) at stream \( k \):
\begin{equation}\label{eqn:mrse}
    \text{MR-SSE}(k,r) = \frac{1}{M} \sum_{i=1}^M \frac{\|\Xmat(\mu_i) - \Vmat_r \widetilde{\Xmat}_k(\mu_i) \|_F}{\|\Xmat(\mu_i)\|_F},
\end{equation}
where \( \widetilde{\Xmat}_k(\mu_i) \) is the state trajectory simulated using the reduced model obtained at stream \( k \) for parameter \( \mu_i \). We track MR-SOE and MR-SSE at each data stream for different reduced dimensions \( r \), reporting results for both \gls*{rls} and \gls*{iqrrls} algorithms. 
We additionally present the final reconstruction accuracy using the final relative state error (Final RSE) over different reduced dimensions \( r \) after processing all data streams, which is equivalent to \( \text{MR-SSE}(K,r) \).
We compute the final RSEs for both training and testing datasets, comparing streaming models against baselines: intrusive POD and standard \gls*{opinf} with Tikhonov regularization. 

\begin{figure}[t!]
    \centering
    \includegraphics[width=0.49\textwidth]{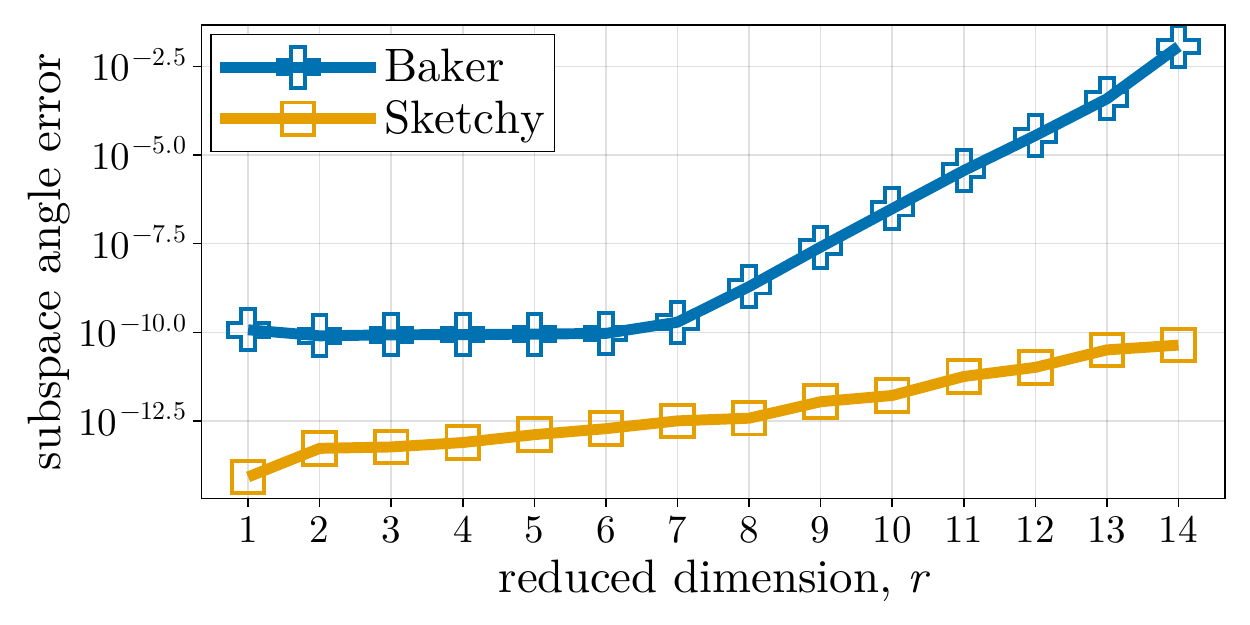} 
    \includegraphics[width=0.49\textwidth]{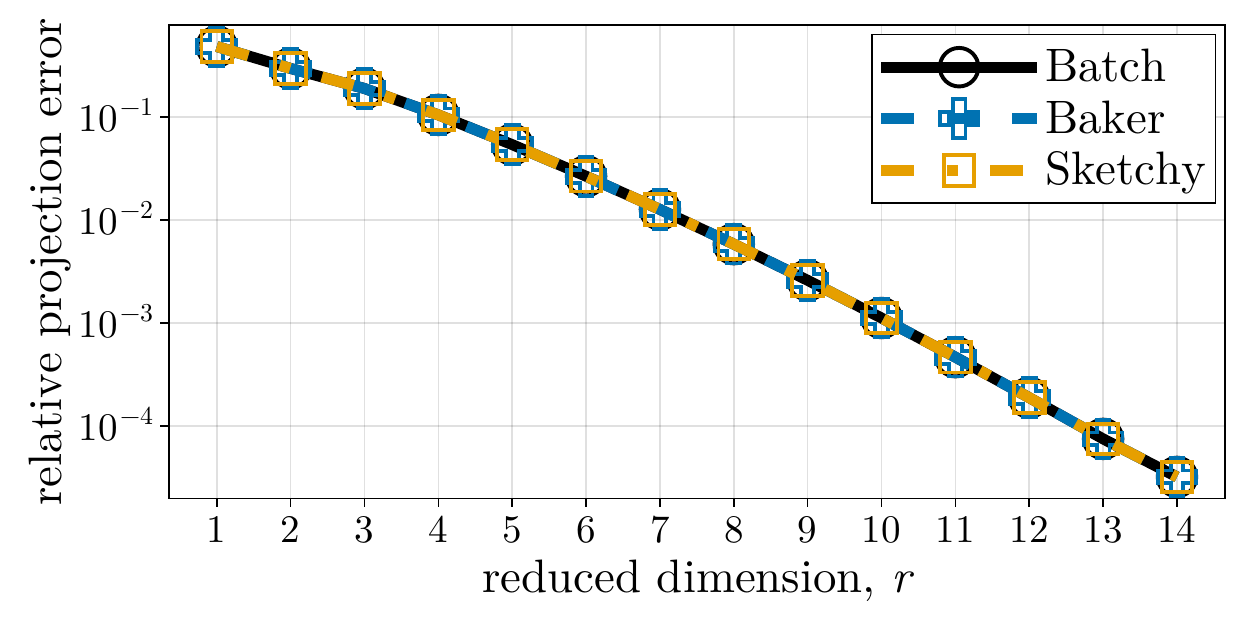}
    \caption{Burgers' Streaming SVD Assessment.\ \textbf{Left:} Subspace angles between POD bases computed from full SVD and iSVD methods.\ \textbf{Right:} Relative projection errors of the snapshot matrix onto POD bases for all SVD methods.}\label{fig:burgers_isvd} 
\end{figure}

\begin{figure}[t!]
    \centering
    \includegraphics[width=0.93\textwidth]{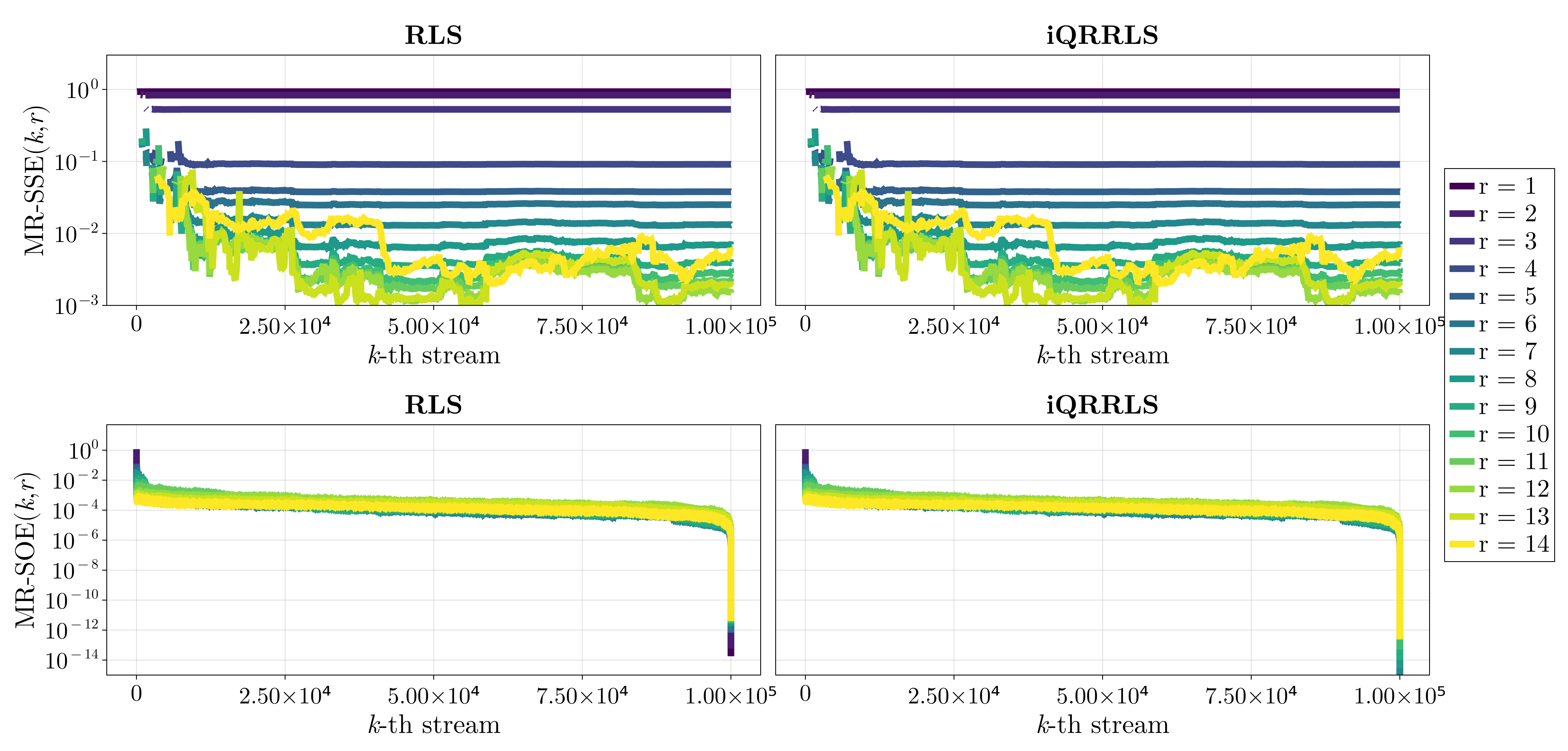}
    \vspace{-1.0em}
    \caption{Burgers' RLS Assessment.\ \textbf{Top:} Relative state error averaged over all parameters at each stream.\ \textbf{Bottom:} Relative streaming operator error normalized by the total number of operator elements, averaged over all parameters at each stream. Gradient lines represent different reduced dimensions \( r \).}\label{fig:burgers_rls}
\end{figure}

\subsubsection{Results \& Discussion}\label{sec:exp:burgers:results}

\Cref{fig:burgers_isvd} compares iSVD methods against full SVD, showing subspace angle errors between the streaming \gls*{svd} methods and the batch \gls*{svd} (left) and relative projection errors (right). SketchySVD exhibits superior subspace alignment across all reduced dimensions \( r \), particularly for higher-dimensional subspaces where smaller singular values are included. This aligns with theoretical predictions~\cite{fareed2020Error}: Baker's iSVD is more sensitive to spectral gap closure, while SketchySVD maintains lower errors through a tight error bound determined by a sufficiently large sketch size, as discussed in~\Cref{sec:bg:isvd}. The gradual increase in subspace angles reflects higher relative errors in smaller singular values. Moreover, both iSVD methods maintain projection errors comparable to batch \gls*{svd} showing their reliability as alternatives for standard \gls*{svd}.

\begin{figure}[t!]
    \centering
    \includegraphics[width=0.45\textwidth]{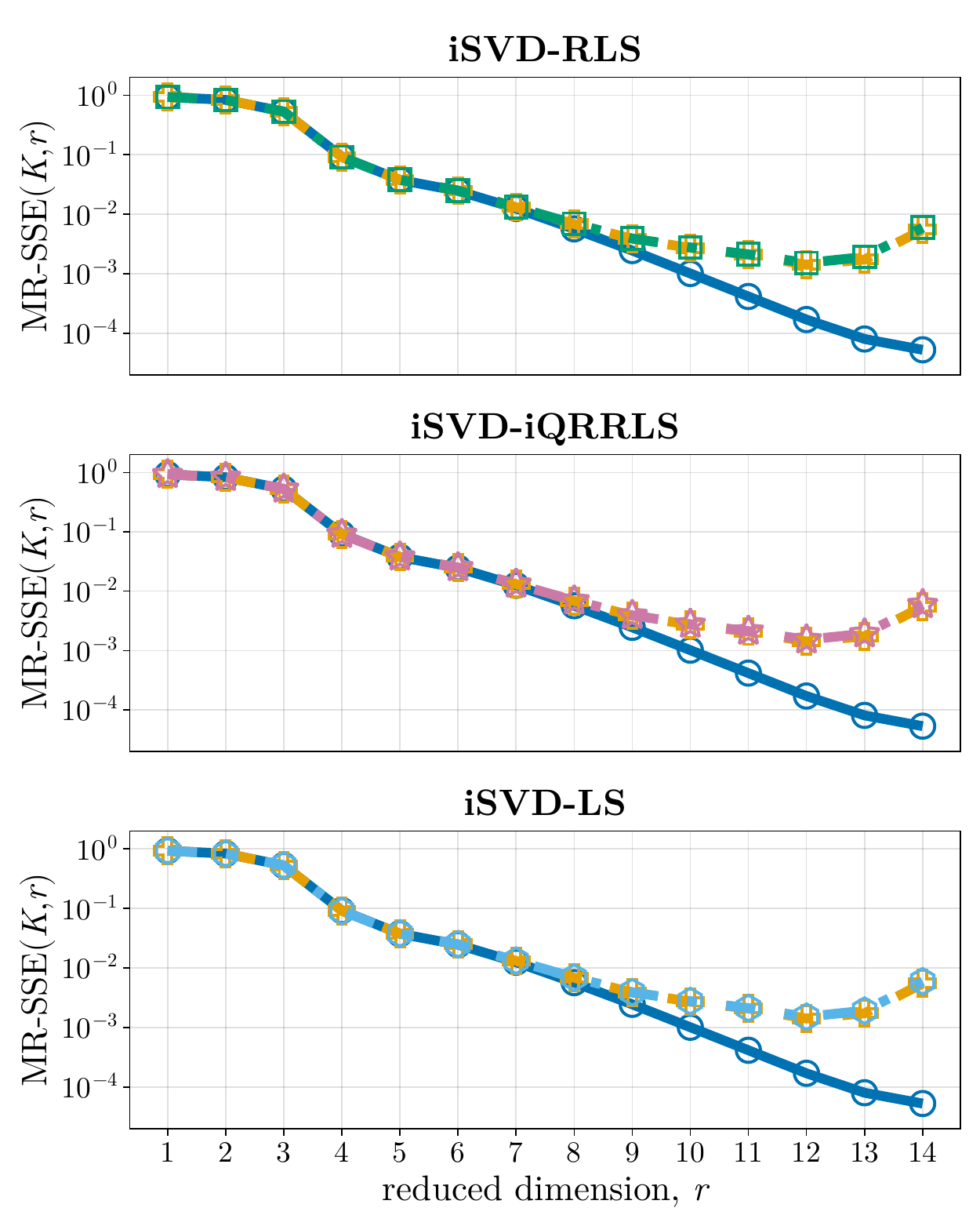}
    \includegraphics[width=0.45\textwidth]{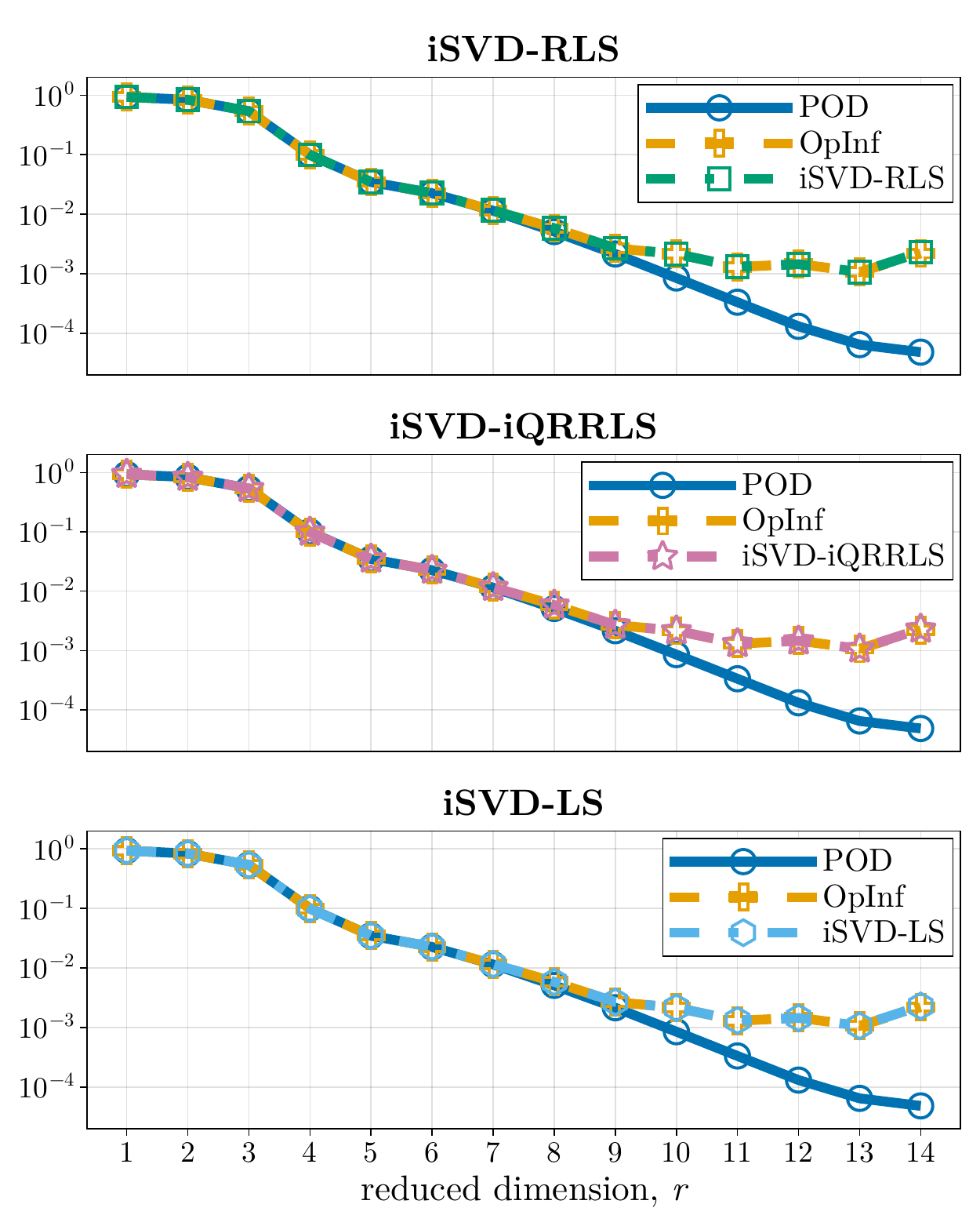}
    \caption{\textbf{Left Column:} Final relative state errors for training data across all reduced dimensions \( r \) for Burgers' equation.\ \textbf{Right Column:} Final relative state errors for testing data across all reduced dimensions \( r \).}\label{fig:burgers_final} 
\end{figure}

\Cref{fig:burgers_rls} illustrates the evolution of relative state errors (top) and streaming operator errors (bottom) throughout the streaming process. State errors decrease with fluctuations from incremental updates, ultimately plateauing for all \( r \), confirming that \gls*{rls} effectively refines the operator solution with each stream. Operator errors decrease sharply initially, plateau during the middle phase, then drop dramatically to near machine precision at the final stream when the update incorporates the full data. Higher \( r \) yields lower final state errors due to greater energy retention, while operator errors remain within the same order of magnitude across dimensions. Notably, state errors converge much earlier than operator errors reach machine precision, suggesting that approximate operator estimates can still yield accurate state reconstructions.

\Cref{fig:burgers_final} presents the final relative state errors for both training (left) and testing data (right) across all reduced dimensions \( r \) and Streaming \gls*{opinf} paradigms. We observe that the final relative state errors for both training and testing data decrease monotonically with increasing reduced dimension \( r \) up to \( r \approx 10 \), beyond which closure errors---arising from unmodeled dynamics in the reduced space---cause the error to level out, whereas the intrusive POD method continues to decrease beyond this point~\cite{peherstorferW2016}. Moreover, the testing results indicate successful generalization of the Streaming \gls*{opinf} models within the trained parameter range. The slight error increase observed at \( r = 13, 14 \) for less important \gls*{pod} modes reflects numerical ill-conditioning in the \gls*{ls} problem caused by the abundance of steady-state data at the end of trajectories for higher viscosity parameters, consistent with observations in~\cite{peherstorferW2016}. Nevertheless, as shown in~\Cref{tab:computational-cost}, which shows memory costs and savings for all experiments, we see that Streaming \gls*{opinf} reduces total memory usage by over 99\% while obtaining comparable accuracy to the standard \gls*{opinf} for this example. 

These results validate that Streaming \gls*{opinf} on a benchmark problem, achieving accuracy comparable to batch OpInf while providing substantial memory efficiency and scalability for streaming applications.

\subsection{Kuramoto-Sivashinsky Equation}\label{sec:exp:kse}

Next, we examine the Kuramoto-Sivashinsky Equation (KSE), a canonical example of spatiotemporal chaos originally developed to model flame propagation~\cite{kuramoto1978diffusion,sivashinksy1977nonlinear}. Although this small-scale problem does not require data streaming, \gls*{kse}'s rich nonlinear dynamics make it a popular benchmark for model reduction~\cite{lu2017data,almeida2022Nonintrusive,koike2024Energy} and a useful test case for evaluating Streaming OpInf on chaotic systems before applying it to larger-scale turbulent problems. We present the problem setup, evaluation metrics, and results with discussions in~\Cref{sec:exp:kse:setup,sec:exp:kse:metrics,sec:exp:kse:results}, respectively.

\subsubsection{Problem Setup}\label{sec:exp:kse:setup}

The governing partial differential equation is
\begin{equation}\label{eqn:kse}
    \frac{\partial }{\partial t}x(\omega,t) + \frac{\partial^2 }{\partial \omega^2}x(\omega, t) + \mu \frac{\partial^4 }{\partial \omega^4}x(\omega, t) + x(\omega,t)\frac{\partial }{\partial \omega}x(\omega,t) = 0,
\end{equation}
where \( x(\omega,t) \) is the flame front position, \( \omega \) is the spatial coordinate, and \( \mu > 0 \) is the kinematic viscosity. \Gls{kse} incorporates temporal evolution, destabilizing second-order diffusion, stabilizing fourth-order hyperdiffusion, and nonlinear convection, with these competing effects producing complex spatiotemporal dynamics~\cite{sprott2010elegant,szezech2007onset}.

For this investigation, we focus on a single parameter value \( \mu = 1 \) with no external control input. After spatial discretization, the resulting model takes the quadratic form:
\begin{equation*}
    \xdot(t) = \Amat_1\xvec(t) + \Amat_2(\xvec(t) \otimes \xvec(t)),
\end{equation*}
where \( \xvec(t) \in \mathbb{R}^n \) is the state vector, \( \Amat_1 \in \mathbb{R}^{n \times n} \) and \( \Amat_2 \in \mathbb{R}^{n \times n^2} \) are as defined in~\eqref{eqn:poly-sys}. 

Following the computational setup from~\cite{koike2024Energy}, we define the spatial domain as \( \omega \in [0, L] \) with \( L = 22 \) and periodic boundary conditions. The domain length \( L = 22 \) is chosen to ensure the system exhibits chaotic dynamics while maintaining computational tractability~\cite{cvitanovic2010State}. The initial condition is specified as \( x(\omega,0) = a\cos(2\pi\omega/L) + b\cos(4\pi\omega/L) \), where \( a \in \{0.2, 0.7, 1.2\} \) and \( b \in \{0.1, 0.5, 0.9\} \) are amplitude parameters that control the initial perturbation strength.

The spatial discretization employs \( n = 512 \) uniformly distributed grid points, providing adequate resolution to capture the fine-scale structures characteristic of the KSE dynamics. Time integration is performed using a semi-implicit scheme that combines the second-order Crank-Nicolson method for the linear terms (second and fourth-order derivatives) with the second-order Adams-Bashforth method for the nonlinear convection term. This hybrid approach ensures numerical stability while maintaining temporal accuracy. The time step is set to \( \delta t = 1 \times 10^{-3} \), and simulations extend from \( t = 0 \) to \( t = 300 \), capturing both transient and fully developed chaotic regimes.

The training dataset comprises 9 trajectories generated from all combinations of the amplitude parameters \( a \) and \( b \). Time derivative data is approximated using first-order finite differences applied to the solution snapshots. To reduce computational burden while preserving essential dynamics, both snapshot and time derivative data are downsampled by retaining every 100th time step, resulting in \( K = 3,000 \) snapshots per trajectory and a total of \( 9K = 27,000 \) snapshots across all trajectories. For the \gls*{isvd} phase, we compute the \gls*{svd} for the entire dataset of \( 9K \) snapshots. We investigate reduced dimensions \( r \in \{9, 12, 15, 18, 21, 24\} \). 

For generalization assessment, testing data is generated using 50 additional combinations of amplitude parameters \( (a,b) \) drawn uniformly from the continuous ranges \( a \in [0.2, 1.2] \) and \( b \in [0.1, 0.9] \). These testing models are evaluated on the same temporal grid as the training data. We compute the \gls*{svd} using both Baker's \gls*{isvd} and SketchySVD methods for comparison, with the \gls*{svd} components from Baker's \gls*{isvd} deliberately chosen for the subsequent \gls*{ls} phase to explore the impact of different iSVD methods on final model performance.

In this example, we employ both the standard \gls*{rls} and \gls*{iqrrls} algorithms to solve the \gls*{ls} problem at each data stream. The Tikhonov regularization parameter is set to \( \Gammamat = \gamma\mathbf{I}_d \) with \( \gamma = 1 \times 10^{-9} \) for both streaming and batch Tikhonov-regularized OpInf methods, providing minimal regularization while ensuring numerical stability.

Since the provided data includes both downsampled snapshot data and time-derivative data, using finite difference to approximate the time derivative will introduce significant errors in the operator inference process. Hence, this experiment specifically applies the iSVD-Projection-RLS paradigm in~\Cref{sec:stream:selection}, where both downsampled snapshot data and time derivative data are directly projected onto the POD basis to form the reduced data.

\subsubsection{Evaluation Metrics}\label{sec:exp:kse:metrics}
For \gls*{isvd} assessment, we measure subspace alignment using subspace angles and approximation quality using relative projection errors, as in~\Cref{sec:exp:burgers:metrics}. For streaming operator learning, we track only the MR-SOE at each stream with \( M = 1 \). Unlike the Burgers' equation case, we omit state error metrics because KSE's chaotic dynamics cause exponentially diverging trajectories from small perturbations, making point-wise error metrics less meaningful. Hence, it is crucial that we capture the statistical properties and attractor geometry of the chaotic system using \glspl*{qoi} that show dynamical invariants, i.e., invariants to initial conditions and individual trajectories. This is also true for the large-scale turbulent flow example considered in~\Cref{sec:exp:channel}.

The first \gls*{qoi} we compute is the leading 10 Lyapunov exponents (LEs) \( (\lambda_1, \lambda_2, \ldots, \lambda_{10}) \), which quantify the average exponential rate of trajectory divergence or convergence in phase space~\cite{wolf1985determining}, with positive LEs indicating chaotic directions and negative LEs representing stable directions. LEs are computed using the algorithm from~\cite{kuptsov2012theory,sandri1996numerical}, averaging over all trajectories for training and testing datasets and comparing against the full model and batch methods.

The second \gls*{qoi} is the Kaplan-Yorke dimension (KY-dimension) computed from the Lyapunov spectrum, which estimates the fractal dimension of the chaotic attractor~\cite{kaplan2006chaotic,chlouverakis2005Comparison}: \( D_{KY} = j + \frac{\sum_{i=1}^{j} \lambda_i}{|\lambda_{j+1}|}\), where \( j \) is the largest integer such that \( \sum_{i=1}^{j} \lambda_i \geq 0 \). We validate our computed values against reference values in~\cite{edson2019Lyapunov}. Moreover, for qualitative assessment, we present spatiotemporal flow field predictions for representative trajectories from both training and testing datasets.

\subsubsection{Results \& Discussion}\label{sec:exp:kse:results}

\begin{figure}[b!]
    \centering
    \includegraphics[width=0.49\textwidth]{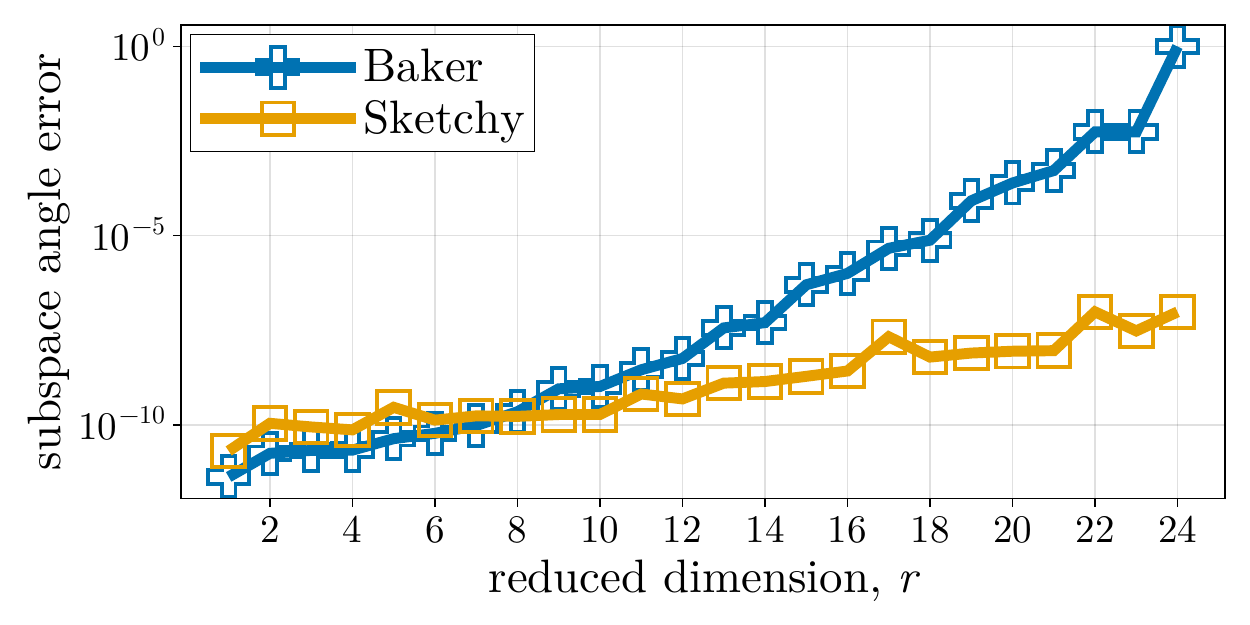} 
    \includegraphics[width=0.49\textwidth]{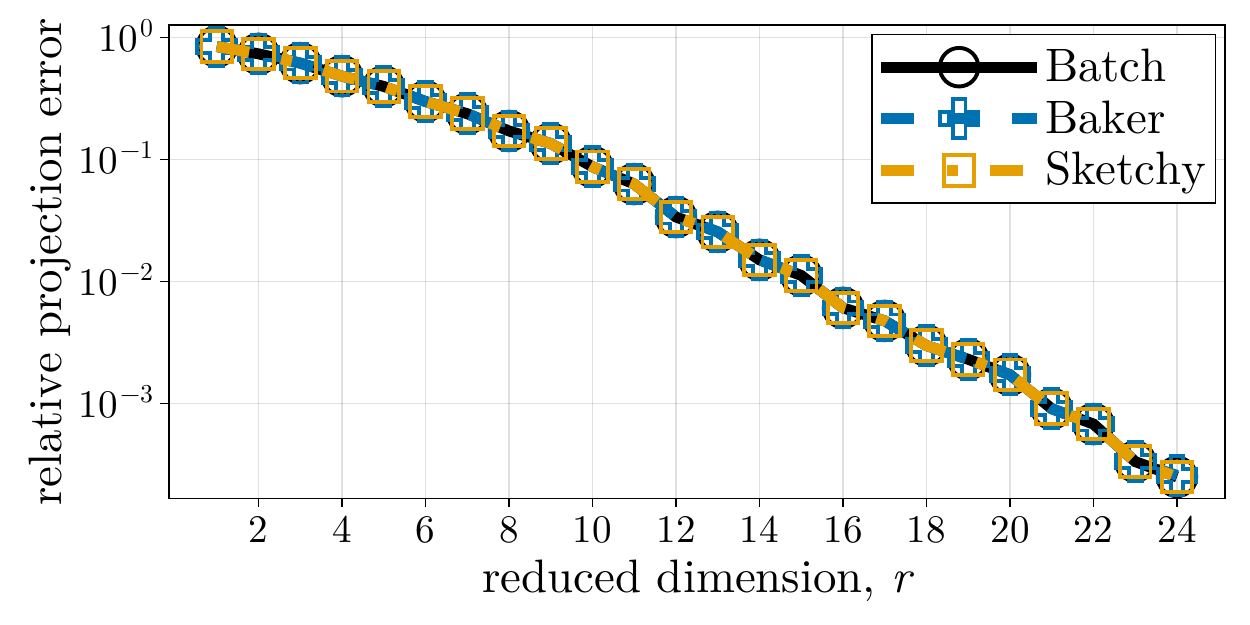}
    \caption{KSE Streaming SVD Assessment.\ \textbf{Left:} Subspace angles between POD bases computed from full SVD and iSVD methods.\ \textbf{Right:} Relative projection errors of the snapshot matrix onto POD bases computed from iSVD methods.}\label{fig:kse_isvd} 
\end{figure}

\begin{figure}[b!]
    \centering
    \includegraphics[width=\textwidth]{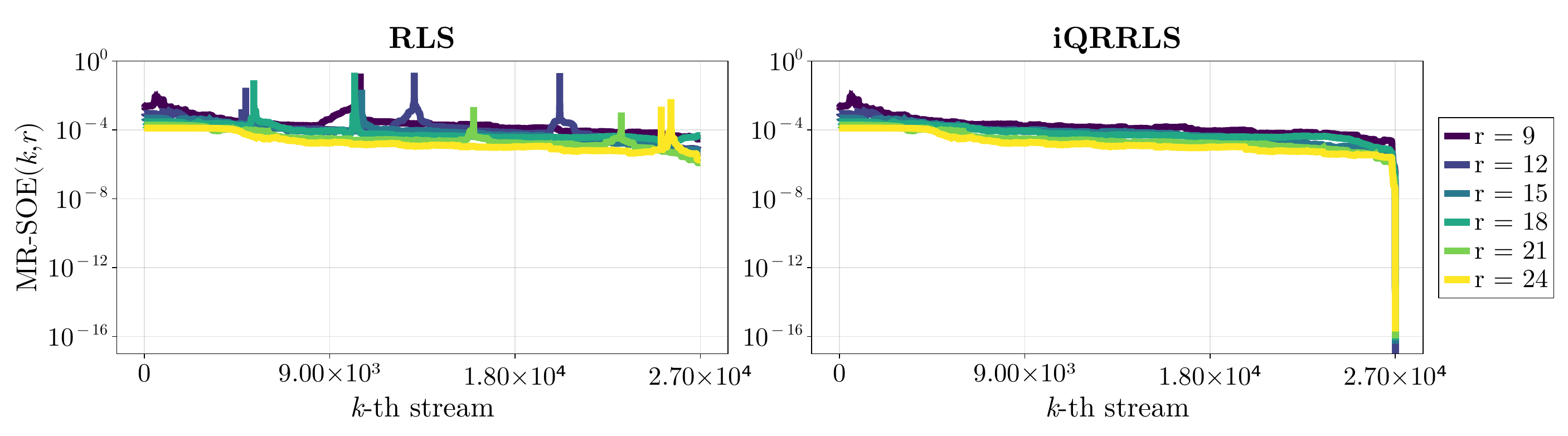}
    \vspace{-1.5em}
    \caption{KSE RLS Assessment. Relative streaming operator error normalized by the total number of operator elements, averaged over all trajectories at each stream. Gradient lines represent different reduced dimensions \( r \).}\label{fig:kse_rls}
\end{figure}

\Cref{fig:kse_isvd} presents the performance comparison between Baker's \gls*{isvd} and SketchySVD methods. The left panel shows subspace angles relative to the full SVD basis, while the right panel displays relative projection errors. Both methods exhibit precise subspace alignment for reduced dimensions up to \( r = 12 \), with Baker's \gls*{isvd} showing substantially larger alignment errors for higher dimensions. This behavior agrees with the results in the Burgers' equation example. The projection error comparison demonstrates that both \gls*{isvd} methods maintain approximation quality comparable to full SVD across all investigated dimensions. As observed in the Burgers' equation experiment, SketchySVD demonstrates superior accuracy in subspace alignment. However, we deliberately employ Baker's iSVD components in the subsequent \gls*{ls} phase to investigate the robustness of the streaming approach to suboptimal basis selection.


\Cref{fig:kse_rls} illustrates the evolution of relative streaming operator errors, revealing distinct behaviors between RLS and iQRRLS algorithms. The iQRRLS algorithm exhibits the characteristic sharp error decrease near final streams, similar to the Burgers' experiment, while standard RLS displays acute error spikes throughout streaming without dramatic final improvement. This difference stems from numerical conditioning, in which iQRRLS maintains superior conditioning through QR factorization-based updates, providing robustness even as \( \Pmat_k \) becomes ill-conditioned, while standard RLS becomes susceptible to numerical instabilities as more data streams are processed. Overall, iQRRLS demonstrates more near machine-precision convergence to the batch \gls*{opinf} solution across all reduced dimensions.

\begin{figure}[t!]
    \centering
    \includegraphics[width=0.9\textwidth]{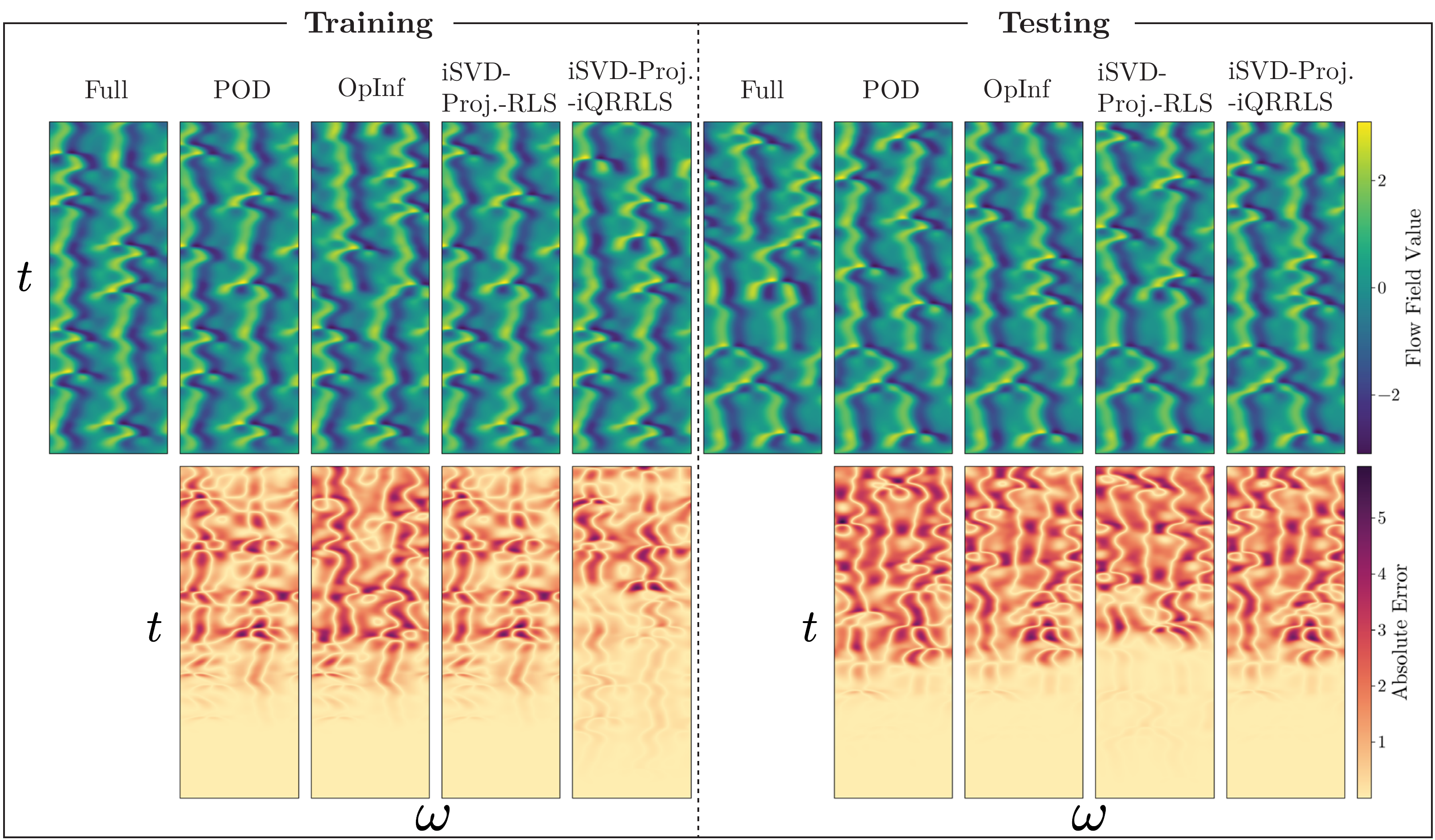}
    \vspace{-0.5em}
    \caption{Comparison of KSE flow field predictions for training (left box) and testing (right box) data using full model and reduced models with \( r = 24 \).\ \textbf{Row 1:} Predictions for training and testing trajectories with initial condition parameters \( (a, b) = (0.2, 0.5) \) and \( (0.6869, 0.4186) \), respectively.\ \textbf{Row 2:} Absolute errors between full model and reduced models. Time evolves from bottom to top.\ ``iSVD-Proj.-RLS'' and ``iSVD-Proj.-iQRRLS'' indicate iSVD-Projection paradigms.}\label{fig:kse_flow}
\end{figure}


\Cref{fig:kse_flow} presents a qualitative comparison of spatiotemporal flow field predictions for representative trajectories from training and testing datasets using \( r = 24 \). The top row displays the characteristic spatiotemporal evolution patterns, while the bottom row shows absolute errors relative to the full model. All learning approaches successfully capture the primary flow features and spatiotemporal patterns of the full model, with error magnitudes comparable to intrusive POD, indicating similar approximation quality. The absolute errors can exceed flow field magnitudes in certain regions, which is characteristic of chaotic systems where trajectories exponentially diverge and lead to phase-shifts. Moreover, error patterns show larger magnitudes in the latter portion of the domain (top half of error plots) compared to the initial transient region, reflecting the challenge of representing fully developed chaotic dynamics where small discrepancies amplify over time. Nevertheless, the bounded errors without unphysical growth confirm that the reduced models remain dynamically consistent.

\Cref{fig:kse_final} assesses the learned models' ability to predict dynamical invariants. The first two columns show the Lyapunov spectrum (first 10 largest exponents) for \( r = 24 \) compared to reference values~\cite{edson2019Lyapunov}. All reduced models successfully capture both positive Lyapunov exponents and negative exponents, which is crucial for maintaining the correct balance between expansion and contraction in phase space. The Kaplan-Yorke dimension analysis (third column) reveals close agreement with reference values~\cite{edson2019Lyapunov} for specific dimensions (\( r = 9, 21, 24 \)), with larger deviations for intermediate values (\( r = 12, 15, 18 \)). This non-monotonic behavior reflects the metric's sensitive dependence on precise Lyapunov exponent values, where small errors in individual exponents propagate when \( \sum_{i=1}^{j} \lambda_i \) approaches zero. Both RLS and iQRRLS variants achieve dynamical invariants comparable to batch methods.

In summary, our results demonstrate that Streaming \gls*{opinf} preserves essential dynamical characteristics of chaotic systems while providing over 99\% memory savings compared to batch methods as shown in~\Cref{tab:computational-cost}. The preservation of Lyapunov exponents and accurate approximation of fractal dimensions confirm that the streaming approach captures the fundamental geometric and dynamical properties of the underlying attractor, making it suitable for applications requiring long-term statistical accuracy.

\begin{figure}[t!]
    \centering
    \includegraphics[width=0.9\textwidth]{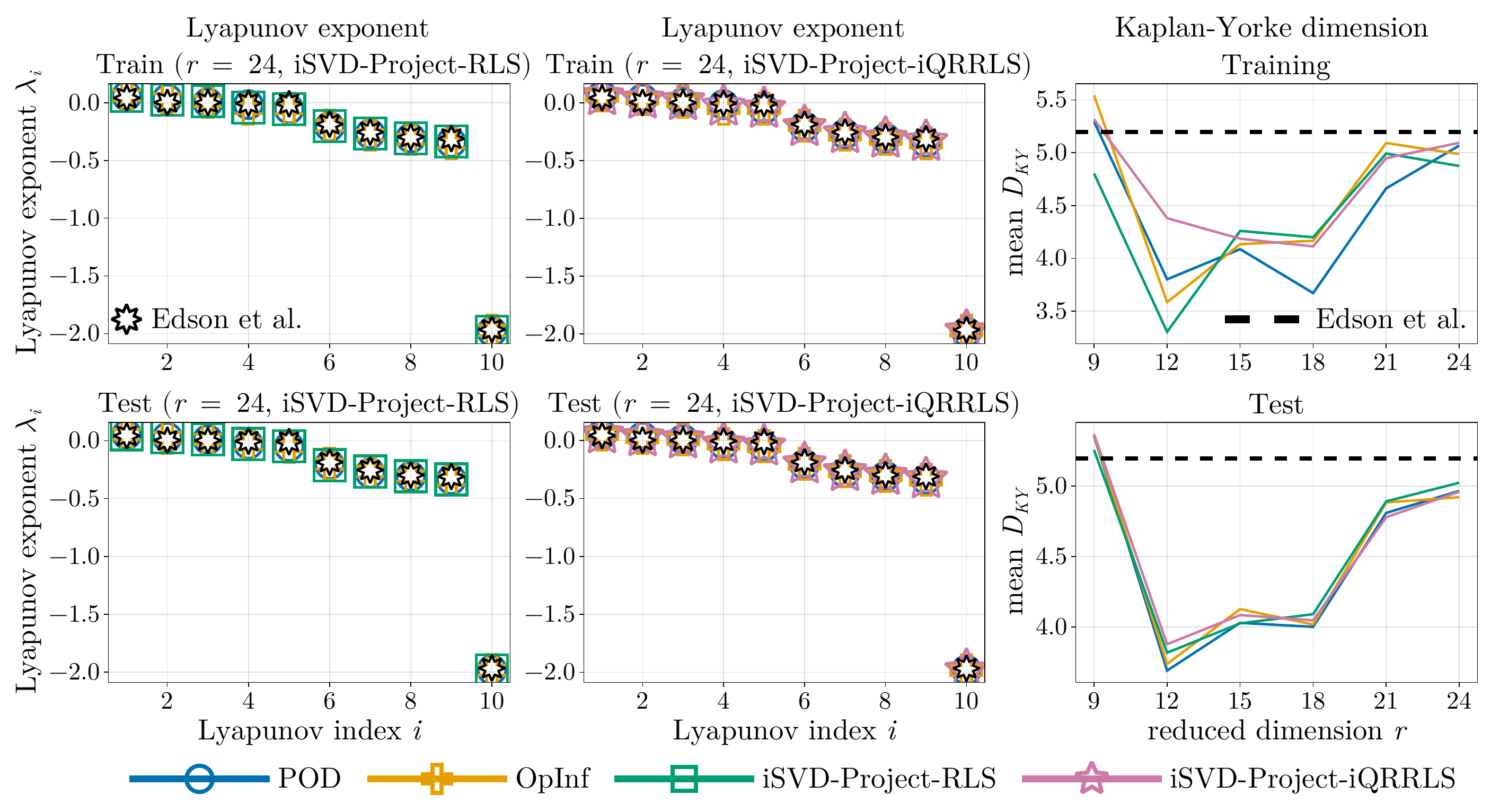}
    \vspace{-0.7em}
    \caption{KSE QoI Assessment.\ \textbf{Columns 1 \& 2:} First 10 largest Lyapunov exponents for training and testing data with \( r = 24 \).\ \textbf{Column 3:} Kaplan-Yorke dimensions for training and testing data across all reduced dimensions. Streaming-OpInf models are ``iSVD-Project-RLS'' and ``iSVD-Project-iQRRLS''. Black markers and lines represent reference LE values and KY-dimensions from~\cite{edson2019Lyapunov}.}\label{fig:kse_final} 
\end{figure}

\subsection{Turbulent Channel Flow}\label{sec:exp:channel}
The final numerical experiment examines three-dimensional turbulent channel flow at high Reynolds number, a problem whose large-scale data requirements exceed available memory and necessitate streaming methods. This canonical wall-bounded turbulent flow is pressure-driven between parallel walls and exhibits complex three-dimensional velocity structures that have been extensively studied to understand turbulence physics near boundaries. We perform simulations of a symmetric half-channel flow using a wall-modeled \gls*{les} formulation described in~\Cref{sec:exp:channel:setup}, with evaluation metrics and results presented in~\Cref{sec:exp:channel:metrics,sec:exp:channel:results}, respectively.

\subsubsection{Problem Setup}\label{sec:exp:channel:setup}
We use AMR-Wind, a parallel, block-structured adaptive-mesh incompressible flow solver, to perform wall-modeled \gls*{les} of the channel flow~\cite{kuhn2025amr,exawind2024}. AMR-Wind solves the \gls*{les} formulation of the incompressible \gls*{ns} equations. With \( (\,\widetilde{\cdot}\,) \) denoting the spatial filtering operator, the governing equations in Cartesian coordinates using Einstein notation are
\begin{align}
    \frac{\partial \ot{u_j}}{\partial \omega_j} & = 0,\label{eqn:ns-les-cont}\\
    \frac{\partial \ot{u_i}}{\partial t} +
    \frac{\partial \ot{u_i} \ot{u_j}}{\partial \omega_j} &=
    - \frac{1}{\rho}{\frac{\partial \ot{p}}{\partial \omega_i}}
    - {\frac{\partial \tau_{ij}}{\partial \omega_j} }
    + \nu \frac{\partial^2 \ot{u_i}}{\partial \omega_j \partial \omega_j}
    + {F_{i}},\label{eqn:ns-les-mom}
\end{align}
where \( \omega_i \) denotes the coordinate in direction \( i = \{1,2,3\} \), \( \ot{u_i} \) is the filtered velocity, \( \ot{p} \) is the pressure, \( \nu \) is the molecular viscosity, \( \rho \) is the density, and \( \tau_{ij} \) are the subgrid stress terms representing interactions with unresolved scales, defined as \( \tau_{ij} = \ot{u_i u_j} - \ot{u_i}\ot{u_j} \).
The forcing term \( F_i \) drives the flow, and each direction \( i \) corresponds to the streamwise, spanwise, and wall-normal directions, respectively.

The Smagorinsky eddy viscosity model is used to model \( \tau_{ij} \), where the dissipation is calculated as:
\begin{equation*}
    \tau_{ij} = -2 \nu_t \widetilde{S}_{ij}, \qquad
    \nu_t = C_s^2 {(2 \widetilde{S}_{ij} \widetilde{S}_{ij})}^{\frac{1}{2}} \Omega^2, \qquad
    \widetilde{S}_{ij} = \frac{1}{2}\left(\frac{\partial \ot{u_i}}{\partial \omega_j} + \frac{\partial \ot{u_j}}{\partial \omega_i}\right),
\end{equation*}
where \( \ot{S}_{ij} \) is the resolved strain rate tensor, \( C_s \) is the Smagorinsky constant set to 0.1, and \( \Omega \) is the filter width defined by \( \Omega = \sqrt[3]{\delta\omega_1 \times \delta\omega_2 \times \delta\omega_3} \) with \( \delta\omega_i \) denoting the grid spacing in direction \(i\). 

AMR-Wind employs discretization schemes that are second-order accurate in both space and time. The spatial discretization is a second-order finite-volume method. Velocity, scalar quantities, and pressure gradients are located at cell centers, whereas pressure is located at nodes. In addition to spatial staggering, the temporal discretization employs staggering similar to a Crank-Nicolson formulation.

The boundary conditions for the half-channel simulation are periodic in the streamwise and spanwise directions. A log-law-based wall model is applied at the bottom boundary, and centerline symmetry conditions are imposed at the top boundary. The flow is driven by a constant streamwise pressure gradient.

Denoting the full state as \( \xvec = [\ot{u_1}, \ot{u_2}, \ot{u_3}, p]\tran \), and recognizing that the \gls*{ns} equations are governed by quadratic nonlinearity, we assume the spatially discretized form of the governing equations is quadratic with a constant term arising from the forcing and from the mean-centering and normalization described in~\Cref{sec:stream:practical}. The \gls*{ode} model thus takes the form:
\begin{equation*}
    \dot{{\xvec}}(t) = {\Amat}_1\xvec(t) + \Amat_2(\xvec(t) \otimes \xvec(t)) + \cvec,
\end{equation*}
where the full state vector is \( \xvec \in \R^n \).

The data are generated at a high friction Reynolds number \( Re_\tau = 5,200 \) and discretized on a finite-volume grid of \( 384 \times 192 \times 32 \) cells in the streamwise, spanwise, and wall-normal directions, respectively, yielding a full state dimension of \( n = 384 \times 192 \times 32 \times 4 = 9,437,184 \). We collect 10,000 consecutive snapshots after the simulation reaches a statistically stationary state. 

Normalized by \( L \), the half-channel width, the domain size is \( [12L, 6L, 1L] \), the molecular viscosity is \( \nu = 8\times 10^{-6} \), and the forcing term is a constant streamwise pressure gradient given by \( F_{\omega_1} = \frac{\mathrm{d}\overline{p}}{\mathrm{d}\omega_1} = \overline{u_\tau}^2 \), where \( \overline{u_\tau} = 0.0415 \) is the mean friction velocity. The time stepping is adjusted to maintain a constant CFL number of 0.95 throughout the simulation, corresponding to a time step of \( \delta t \approx 0.023 \). The total simulation time of 10,000 time steps corresponds to approximately 10 units of \( L/u_\tau \), capturing sufficient dynamics of this highly chaotic system to demonstrate our reduced operator learning approach.

We partition the 10,000 snapshots by taking the first 8,000 as training snapshots and the remaining 2,000 as testing snapshots. The data are preprocessed via mean-centering followed by normalization to the range \([-1, 1]\), where each grid point is normalized individually using its temporal mean, minimum, and maximum values.

\Cref{fig:channel:spectral_decay} shows the spectral decay of the training data up to 500 modes, revealing the significant challenges posed by this turbulent flow. The slow spectral decay, where singular values remain on the order of \( 10^3 \) for many modes (presumably even beyond 500 modes), indicates that a large number of modes are required to capture the flow dynamics accurately. To address this challenge, we compute the \gls*{svd} components using the SketchySVD method, which, despite slightly higher memory requirements, provides superior accuracy in low-dimensional approximation compared to Baker's method as demonstrated in the previous experiments. We construct a \gls*{pod} basis of dimension 500 to improve accuracy by reducing errors from larger truncated singular values, as discussed in~\Cref{sec:bg:isvd}, and investigate reduced operators of dimension \( r = 300 \). Time derivatives are approximated using fourth-order central differences in the interior with fourth-order forward and backward differences at the boundaries.

\begin{figure}[t!]
    \centering
    \includegraphics[width=0.5\textwidth]{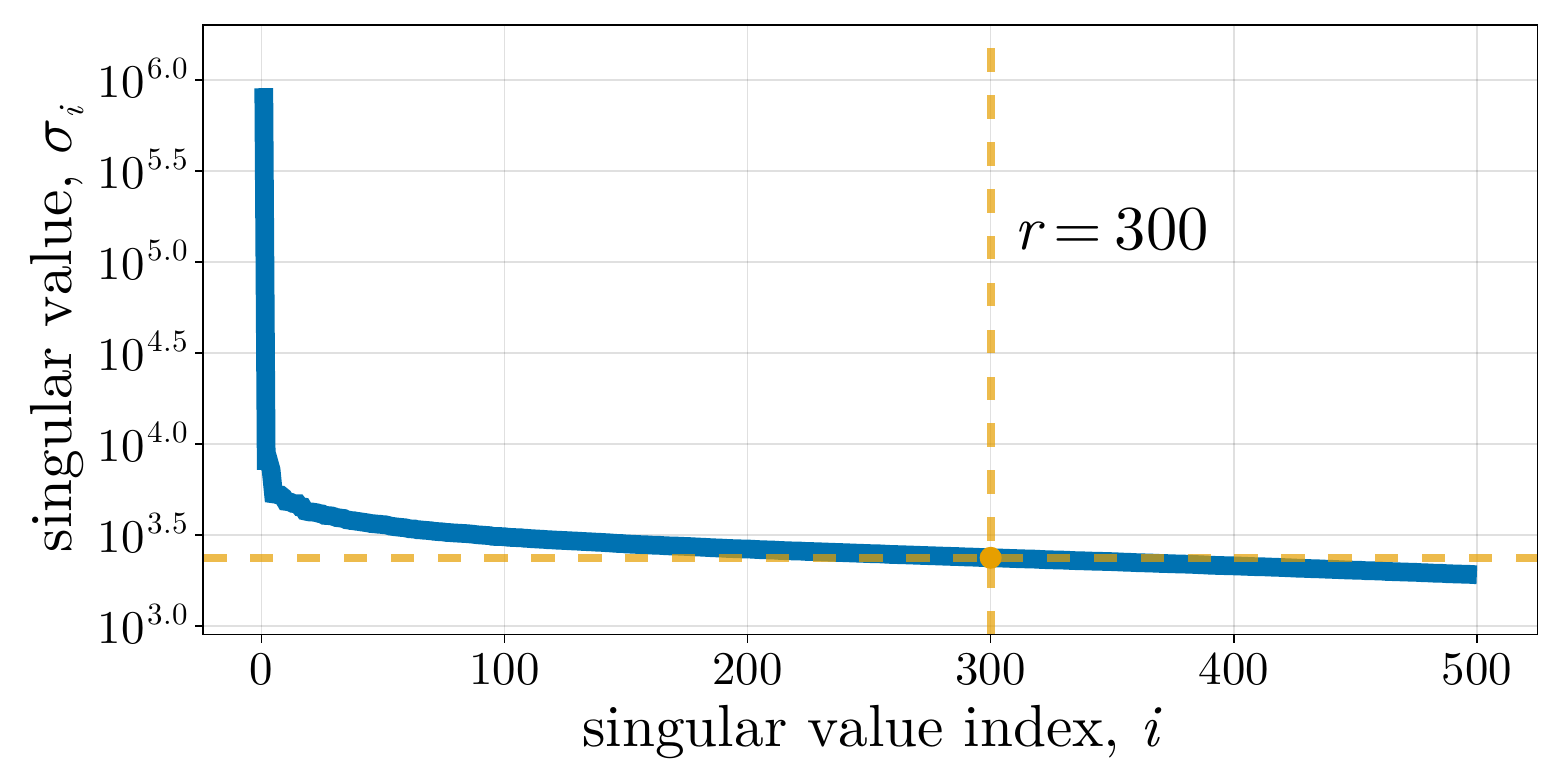}
    \vspace{-1.em}
    \caption{Spectral decay of the turbulent channel flow example for 500 singular values.}\label{fig:channel:spectral_decay}
\end{figure}

We will learn the reduced operators using iSVD-LS\@. Finally, 4\textsuperscript{th}-order Runge-Kutta scheme is used to integrate the learned reduced model in time. Given the problem's complexity, Tikhonov regularization parameters for the \gls*{ls} are carefully selected via grid search over 25 log-equidistant points in \( \gamma_1 \in [10^6, 10^{12}] \) and 20 log-equidistant points in \( \gamma_2 \in [10^{12}, 10^{16}] \). The optimal parameters are found to be \( (\gamma_1 , \gamma_2) = (1.7783\times 10^7,\, 4.2813\times 10^{12}) \). These high regularization values align with our discussion of mitigating operator errors in~\Cref{sec:stream:error:svd}.

\subsubsection{Evaluation Metrics}\label{sec:exp:channel:metrics}
The channel flow is a highly turbulent dynamical system with a large number of positive Lyapunov exponents. Hence, similar to the \gls{kse} example, point-wise error metrics are not very meaningful for this problem. Thus, we focus on an assessment of the flow field predictions, where we compare two-dimensional slices of the flow field at specific time instances, and two important \glspl*{qoi} that are commonly used in the fluid mechanics community to assess the accuracy of turbulent channel flow simulations. 

The first \gls*{qoi} is the friction velocity \( u_\tau \), which is an important scaling parameter for wall-bounded turbulent flows~\cite{bredberg2000wall,kadivar2021review}. In our problem, the friction velocity oscillates around the target mean \( \overline{u_\tau} \) used in the forcing term, and thus, we report the target mean values and visualize the friction velocity normalized by the target mean, i.e., \( u_\tau/\overline{u_\tau} \). 

The second \gls*{qoi} is the wall-normal profile of the streamwise velocity \( u_1 \), which represents the spatial characteristics of the flow, exhibiting a low-law profile for the mean velocity given by~\cite{moser1999direct}:
\begin{equation}\label{eqn:log-law}
    \frac{u_{1}}{u_\tau} = \frac{1}{\kappa} \log{\left(\frac{u_\tau \omega_3}{\nu}\right)} + B,
\end{equation}
where \( \kappa \) is the von Karman constant, and \( B \) is an intercept constant. The two \glspl*{qoi} are computed for both the training and testing data and compared with the results from the original data. 

\subsubsection{Results \& Discussion}\label{sec:exp:channel:results}

\begin{table}[htbp!]
    \centering
    \parbox{0.85\textwidth}{\caption{Target mean of friction velocity \( \overline{u_\tau} \) compared with the mean values from training and testing data calculated by computing the friction velocities then averaging them over time. Errors are relative to the reference value of \(\overline{u_\tau} = 0.0415\).}\label{tab:target-mean-utau}}\\
    \vspace{2mm}
    \begin{tabular}{c c c}
        \toprule
         & Training \( \overline{u_\tau} \) (Rel. Error \%) & Testing \( \overline{u_\tau} \) (Rel. Error \%) \\
        \midrule
        Original Data & 0.04137 (0.3049\%) & 0.04119 (0.7364\%) \\
        Streaming OpInf & 0.04126 (0.5611\%) & 0.04130 (0.4857\%) \\
        \bottomrule
    \end{tabular}
\end{table}

\paragraph{Assessment of the Flow Field Predictions}
\Cref{fig:channel:flow} present two-dimensional slices of the three-dimensional turbulent channel flow field at 3 or 2 temporal instances during training and testing, respectively. The visualizations focus on the streamwise velocity component \( \ot{u_1} \), which exhibits the most energetic turbulent structures. The first row displays the original high-fidelity data, the second row shows predictions from Streaming \gls*{opinf} with reduced dimension \( r = 300 \), and the third row presents the \gls*{pod} projection as a benchmark for the best achievable accuracy given the same reduced dimensionality. The fourth and fifth rows visualize the absolute errors between the original data and the Streaming \gls*{opinf} predictions and \gls*{pod} projections, respectively.

During the training phase in~\Cref{fig:channel:flow}, Streaming \gls*{opinf} successfully captures the complex turbulent structures characteristic of wall-bounded flows, including the elongated streaks and smaller-scale vortical structures similar to the original data across all temporal instances, demonstrating the method's ability to learn the underlying dynamics from streaming data. The \gls*{pod} projection results in the third and fifth row provide a visual reference for the inherent approximation error introduced by dimensionality reduction alone, independent of the operator learning process. Compared to the projection errors, the error distribution of Streaming \gls*{opinf} (fourth row) remains at a comparable magnitude and is spatially distributed throughout the domain. This indicates that the learned reduced operators accurately capture the system dynamics without introducing significant additional modeling error beyond the truncation error from dimensionality reduction. 

The testing phase results in~\Cref{fig:channel:flow} demonstrate the model's predictive capability on unseen data. The Streaming \gls*{opinf} predictions maintain their fidelity to the original flow structures throughout the testing period, successfully reproducing the characteristic turbulent patterns. Further, the Streaming \gls*{opinf} predictions present the spatial patterns more clearly than the \gls*{pod} projections. The error magnitudes and distributions remain consistent with those observed during training, indicating that the learned reduced model generalizes well without overfitting to the training data. This is particularly significant given the chaotic nature of turbulent flows, where small perturbations can lead to trajectory divergence. The sustained accuracy across 2,000 testing snapshots (approximately 2.5 time units of \( L/u_\tau \)) demonstrates the robustness of the learned reduced operators. 

\begin{figure}[t!]
    \centering
    \includegraphics[width=0.95\textwidth]{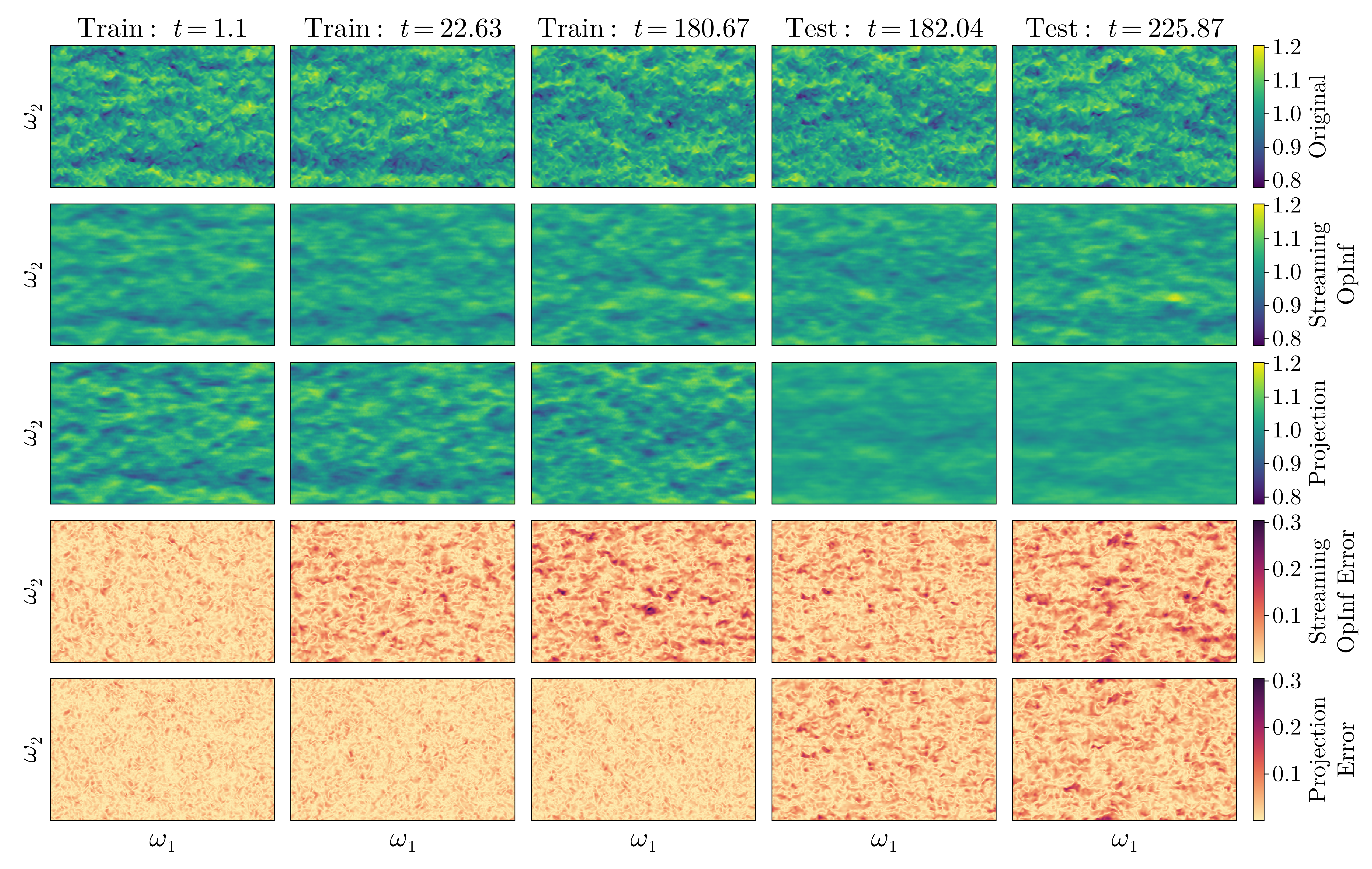}
    \vspace{-1.0em}
    \caption{Comparison of 2D slices of the turbulent channel flow field predictions for training and testing data of the \textbf{streamwise velocity} component. First row is the original data, the second row is the prediction from Streaming-OpInf with \( r = 300 \), the third row is the projection result from POD with \( r = 300 \), and the last two rows is the absolute error between the original data and the prediction from Streaming-OpInf and POD, respectively. Time evolves from left to right.}\label{fig:channel:flow}
\end{figure}


\paragraph{Quantitative Assessment via Quantities of Interest}
\Cref{fig:channel:friction-velocity} displays the friction velocity \( u_\tau \) evolution over time. The top panel shows both the original and predicted friction velocities normalized by the target mean friction velocity \( \overline{u_\tau} \) computed by averaging the calculated friction velocities over time. Streaming \gls*{opinf} show similar temporal fluctuations compared to the original data for both training and testing phases, capturing the characteristic variations arising from the passage of turbulent structures near the wall. The bottom panel presents the relative errors on a logarithmic scale, revealing errors consistently below \( 10^{-2} \) for the majority of snapshots, with occasional downward spikes decreasing to \( 10^{-5} \) that correspond to instantaneous overlaps between the predicted and true friction velocities.

\Cref{fig:channel:wall-normal-profile} presents the wall-normal profile of the time-averaged streamwise velocity component. The logarithmic scale on the wall-normal coordinate axis emphasizes the near-wall region where velocity gradients are steepest~\cite{bredberg2000wall}. For both training and testing data, Streaming \gls*{opinf} predictions closely match the original data across the entire wall-normal extent, accurately capturing both the viscous sublayer near the wall and the turbulent/logarithmic layer in the outer region while adhering to the log-law~\eqref{eqn:log-law}~\cite{pinier2019stochastic,marusic2013logarithmic}. The relative errors, shown in the bottom panels, remain below \( 10^{-3} \) for training and \( 10^{-2} \) for testing throughout most of the domain, with slightly elevated errors in the near-wall region where the flow physics are most complex. The consistency between training and testing errors confirms the model's ability to preserve statistical properties of the turbulent flow.

\begin{figure}[t!]
    \centering
    \includegraphics[width=0.75\textwidth]{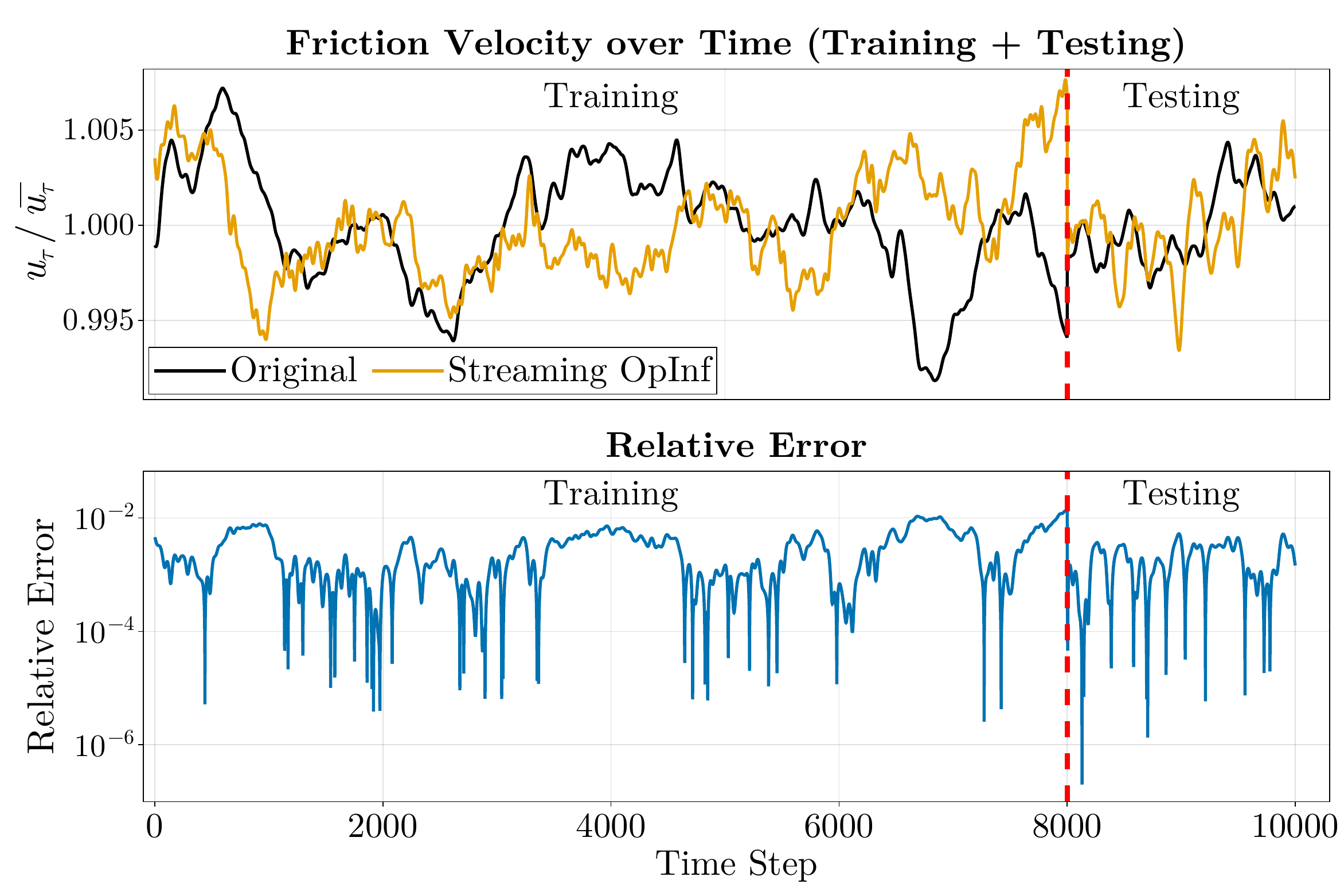}
    \vspace{-1.0em}
    \caption{(\textbf{QoI 1}) Friction velocity \( u_\tau \) for training and testing data.}\label{fig:channel:friction-velocity} 
\end{figure}

\begin{figure}[t!]
    \centering
    \includegraphics[width=0.75\textwidth]{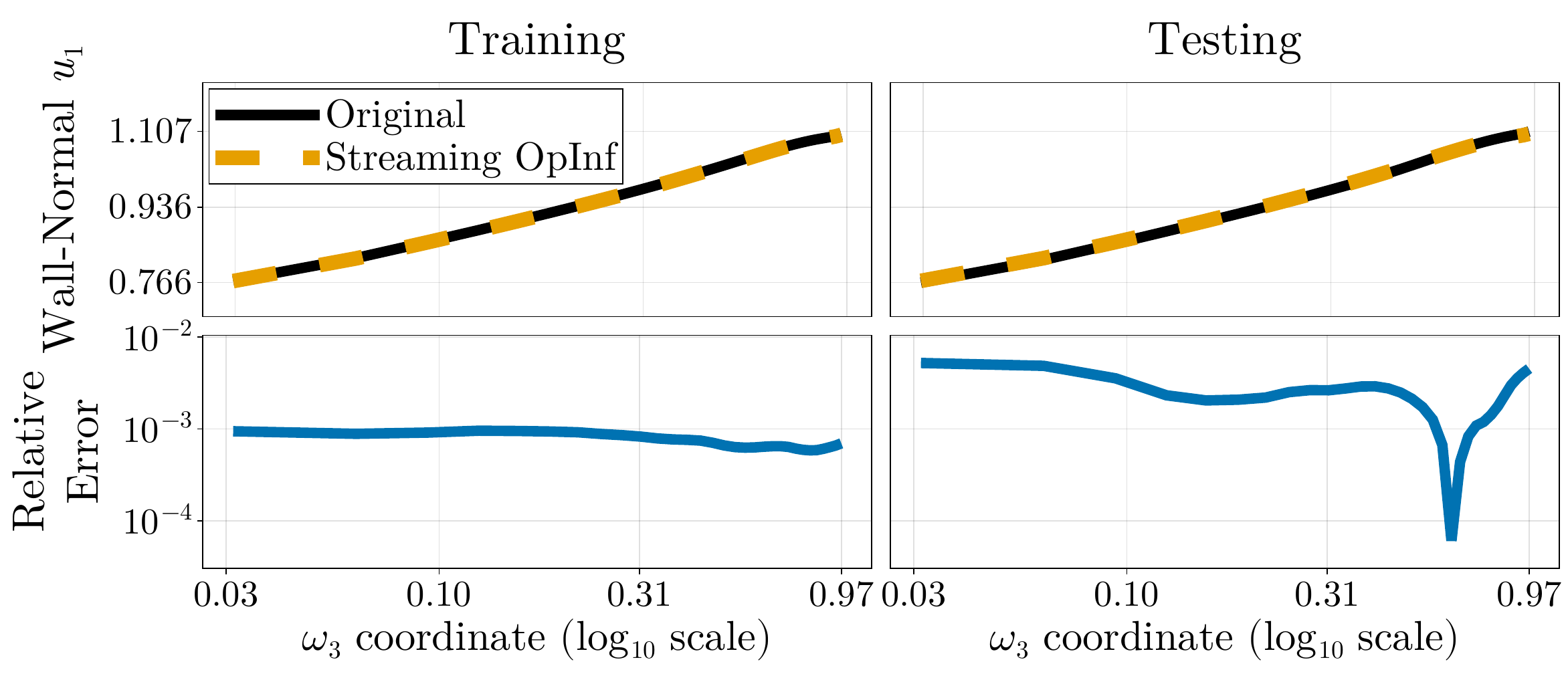}
    \vspace{-1.0em}
    \caption{(\textbf{QoI 2}) Wall normal profile of the streamwise velocity component for training and testing data.}\label{fig:channel:wall-normal-profile} 
\end{figure}

\Cref{tab:target-mean-utau} quantifies the time-averaged friction velocities for both training and testing data, comparing them against the target value \( \overline{u_\tau} = 0.0415 \) specified in the simulation setup. The Streaming \gls*{opinf} predictions yield mean friction velocities of 0.04126 and 0.04130 for training and testing data, respectively, closely matching both the original data statistics (0.04137 and 0.04119) and the target value. The relative errors of approximately 0.5\% demonstrate that Streaming \gls*{opinf} preserves the global flow statistics with high fidelity, an essential requirement for reduced models intended to replace high-fidelity simulations in many-query scenarios.

\paragraph{Discussion}
The results demonstrate that Streaming \gls*{opinf} enables accurate reduced model learning for a large-scale turbulent flow problem where batch \gls*{opinf} is infeasible. With over 9 million degrees of freedom and 8,000 snapshots, the dataset exceeds practical memory limits for simultaneous storage and batch processing. By processing snapshots sequentially, Streaming \gls*{opinf} makes this type of model reduction tractable, achieving a state dimension reduction exceeding 31,000x while maintaining high predictive accuracy and enabling significantly faster state predictions. This fundamentally expands the scope of \gls*{opinf} to large-scale problems previously inaccessible to batch methods.

For turbulent channel flow, Streaming \gls*{opinf} realizes 76\% memory reduction for the SVD process and 68\% total reduction compared to batch \gls*{opinf}~(\Cref{tab:computational-cost}). These substantial savings, combined with sequential data processing, establish the framework's suitability for real-time applications with continuous data arrival and limited storage.

Several algorithmic choices contribute to success in this challenging application: (i) selecting \( r = 300 \) captures dominant turbulent structures with sufficient energy retention, (ii) careful calibration of Tikhonov regularization \( (\gamma_1, \gamma_2) \) through systematic grid search prevents overfitting while promoting stable operator learning despite chaotic dynamics, and (iii) employing SketchySVD achieves favorable trade-offs between computational efficiency and approximation accuracy for massive data matrices.

Finally, the comparable performance between training and testing phases demonstrates that learned operators capture essential physics governing turbulent channel flow for long-term accuracy rather than memorizing training patterns. The sustained fidelity in both instantaneous flow fields and time-averaged statistics demonstrates that Streaming \gls*{opinf} preserves dynamical and statistical properties essential for reliable turbulence modeling.

\begin{table}[htpb!]
    \centering
    \caption{Memory cost comparison between batch and streaming approaches across all three experiments.}\label{tab:computational-cost}
    \vspace{2mm}
    
    \textbf{(a) Memory costs based on~\Cref{tab:isvd-costs,tab:rls-costs,tab:streaming-opinf-costs}}
    \vspace{1mm}
    
    \begin{tabular}{lcccc}
        \toprule
        Experiments & Batch SVD & Streaming SVD & Batch LS & Streaming OpInf LS \\
        \midrule
        Burgers' & 1.28e+07 & 8.32e+03 & 1.21e+06 & 1.44e+04 \\
        KSE & 1.38e+07 & 1.23e+04 & 1.08e+06 & 1.05e+05 \\
        Channel flow & 1.96e+10 & 3.80e+09 & 2.43e+09 & 2.43e+09 \\
        \bottomrule
    \end{tabular}
    
    \vspace{4mm}
    
    \textbf{(b) Memory cost reduction percentages}
    \vspace{1mm}
    
    \begin{tabular}{lccc}
        \toprule
        Experiments & SVD Reduction & LS Reduction & Total Reduction \\
        \midrule
        Burgers' & 99.94\% & 98.81\% & 99.84\% \\
        KSE & 99.91\% & 90.25\% & 99.21\% \\
        Channel flow & 76.36\% & 0.00\% & 68.11\% \\
        \bottomrule
    \end{tabular}
\end{table}

\section{Conclusion}\label{sec:conclusion}

In this work, we have developed a novel reduced operator learning approach that reformulates the traditional \gls*{opinf} framework to a data-streaming context. 
By integrating \gls*{isvd} with \gls*{rls} methods, our method efficiently constructs the reduced operators by only relying on the \gls*{svd} components and the current data sample, thus significantly reducing memory requirements and enabling the handling of large-scale datasets as well as online operator learning scenarios, which lays the groundwork for real-time applications.

We have explored various algorithmic choices within the Streaming \gls*{opinf} framework, including different \gls*{isvd} techniques (Baker's method and SketchySVD) and operator learning strategies (a new reformulation of the batch \gls*{ls} and \gls*{rls} methods). We have also presented practical strategies for choosing different algorithmic variations of the framework based on data availability and computational resources. 

Through comprehensive numerical experiments on benchmark problems such as the one-dimensional viscous Burgers' equation, the Kuramoto-Sivashinsky equation, and a large-scale turbulent channel flow simulation, we have achieved over 99\% memory savings compared to batch \gls*{opinf}---expanding the scope of \gls{opinf} to previously infeasible large-scale applications. 
The experiments also highlighted the importance of practical considerations such as data standardization and regularization in enhancing model performance and stability.

While the results are highly encouraging, some limitations warrant mention. Given the advection-dominant nature of the turbulent channel flow, \( r = 300 \) may still be insufficient to fully resolve all relevant scales, particularly in the near-wall region where velocity gradients are steepest. This is reflected in the friction velocity at later time instances where the trend deviates larger from the original data and the opaqueness compared to the original data in the flow field predictions of~\Cref{fig:channel:flow}. Future work could explore streaming formulations of more advanced reduced modeling methods, such as those utilizing nonlinear manifold approaches as demonstrated in~\cite{geelen2023Operator,schwerdtner2025operator}, which could potentially capture the complex flow physics with even lower dimensions. Additionally, extending the framework to accommodate more complex dynamical systems, exploring the combination of parallelization and streaming, and integrating uncertainty quantification techniques~\cite{guo2022Bayesiana,mcquarrie2025Bayesian} to further enhance the robustness of the learned models. 

Overall, Streaming \gls*{opinf} represents a significant advancement in the field of operator learning, providing a scalable, efficient solution for large-scale data scenarios and establishing foundations for rapid real-time predictions.

\section*{CRediT authorship contribution statement}
Tomoki Koike: Writing - review \& editing, Writing - original draft, Visualization, Validation, Software, Methodology, Investigation, Formal analysis, Data curation, Conceptualization; Prakash Mohan: Writing – original draft, review \& editing, Supervision, Resources, Data curation; Marc T.\ Henry de Frahan: Writing - review \& editing, Supervision, Resources, Data curation; Julie Bessac: Supervision, Funding Acquisition, Project administration; Elizabeth Qian: Conceptualization, Writing - review \& editing, Supervision, Funding Acquisition, Project administration.
\section*{Data Availability}

Data and experiment code are available at \href{https://github.com/smallpondtom/StreamingOpInf}{https://github.com/smallpondtom/StreamingOpInf} for reproducibility.

\section*{Declaration of competing interest}
The authors declare that they have no known competing financial interests or personal relationships that could have appeared to influence the work reported in this paper.
\section*{Acknowledgement}
The authors were supported by the Department of Energy Office of Science Advanced Scientific Computing Research, DOE Award DE-SC0024721. 
This work was authored in part by NLR for the U.S. Department of Energy (DOE), operated under Contract No. DE-AC36-08GO28308. The views expressed in the article do not necessarily represent the views of the DOE or the U.S. Government. The U.S. Government retains and the publisher, by accepting the article for publication, acknowledges that the U.S. Government retains a nonexclusive, paid-up, irrevocable, worldwide license to publish or reproduce the published form of this work, or allow others to do so, for U.S. Government purposes. A portion of the research was performed using computational resources sponsored by the Department of Energy's Office of Critical Minerals and Energy Innovation and located at the National Laboratory of the Rockies.

\appendix
\crefalias{section}{appendix}

\makeatletter
\renewcommand{\thesection}{\Alph{section}}
\renewcommand{\thelemma}{\thesection.\arabic{lemma}}
\renewcommand{\thetheorem}{\thesection.\arabic{theorem}}
\setcounter{section}{0}
\setcounter{theorem}{0}
\makeatother

\section{Streaming SVD Algorithms}\label{app:svd-algos}

\begin{algorithm}[H]
    \caption{Baker's iSVD Algorithm}\label{alg:baker-isvd}
    \begin{algorithmic}[1]
        \Require{} truncation dimension \(r\), total number of data vectors \(K\)
        \State{}Receive initial data vector \(\xvec_1\) and initialize: \(\Vmat_{r_k} \,\boldsymbol{\Sigma}_{r_k} \gets \mathsf{qr}(\xvec_1), ~ \Wmat_{r_k} \gets 1, ~ r_k \gets 1\)
        \For{\(k = 2\) to \(K\)}
            \State{Receive new data vector:} \(\xvec_{k}\)

            \State{Compute projection and residual:} \(\qvec \gets \Vmat^\top_{r_k} \xvec_k,\quad \xvec_{\perp} \gets \xvec_k - \Vmat_{r_k} \,\qvec \)
            \State{Second orthogonalization:} \(\qvec_2 \gets \Vmat^\top_{r_k} \xvec_{\perp},\quad \xvec_{\perp} \gets \xvec_{\perp} - \Vmat_{r_k} \,\qvec_2,\quad \qvec \gets \qvec + \qvec_2\)
            \State{QR decomposition of residual:} \(\xvec_{\perp} p \gets \mathsf{qr}(\xvec_{\perp})\)
            \State{Form augmented matrices:}
            \[
                \wh{\Vmat} \gets \begin{bmatrix}
                    \Vmat_{r_k} & \xvec_{\perp}
                \end{bmatrix},\quad 
                \Jmat \gets \begin{bmatrix}
                    \boldsymbol{\Sigma}_{r_k} & \qvec \\ 
                    \mathbf{0}_{1\times r_k} & p
                \end{bmatrix},\quad 
                \wh\Wmat \gets \begin{bmatrix}
                    \Wmat_{r_k} & \mathbf{0}_{k \times 1} \\ 
                    \mathbf{0}_{1\times r_k} & 1
                \end{bmatrix}, \quad
                r_k \gets r_k + 1
            \]
            \State{Compute SVD\@:} \(\Vmat_J \,\Sigmamat_J \,\Wmat^\top_J \gets \mathsf{svd}(\Jmat)\)
            \State{Update matrices:} \(\Vmat_{r_k} \gets \wh{\Vmat}\,\Vmat_J,\quad \Sigmamat_{r_k} \gets \Sigmamat_J,\quad \Wmat_{r_k} \gets \wh\Wmat \,\Wmat_J \)

            \If{\( r_k > r \)}
                \State{Truncate: }\(\Vmat_{r_k} \gets \Vmat_{r_{k}}(:,1:r),\quad \boldsymbol{\Sigma}_{r_k} \gets \boldsymbol{\Sigma}_{r_{k}}(1:r,1:r),\quad \Wmat_{r_k} \gets \Wmat_{r_{k}}(:,1:r), \quad r_k \gets r\)
            \EndIf{}

        \EndFor{}
        \State{\bfseries Return: }\(\Vmat_r,\ \Sigmamat_r,\ \Wmat_r \)
    \end{algorithmic}
\end{algorithm}

\begin{algorithm}[H]
    \caption{SketchySVD Algorithm}\label{alg:sketchy}
    \begin{algorithmic}[1]
        \Require{}truncation dimension \(r\), total number of data vectors \(K\), sketch sizes \(q\) and \(s\)
        \State{}Initialize reduction maps: \(\boldsymbol{\Upsilon}\in\R^{q\times n}, ~ \boldsymbol{\Omega}\in\R^{q\times K}, ~ \boldsymbol{\Xi}\in\R^{s\times n}, ~ \boldsymbol{\Psi}\in\R^{s\times K}\)
        \State{}Initialize sketches: \(\Xcal_{\mathsf{range}} \gets \mathbf{0}_{n\times q}, ~ \Xcal_{\mathsf{corange}} \gets \mathbf{0}_{q\times K}, ~ \Xcal_{\mathsf{core}} \gets \mathbf{0}_{s\times s}\)
        \For{\(k = 1\) to \(K\)}
            \State{Receive new data vector:} \(\xvec_{k}\)
            \State{Update sketches:} \Comment{\((i,j)\) is entry at row \(i\) and column \(j\), \( (:) \) are all entries in that dimension}
                \[
                    \Xcal_{\mathsf{range}} \gets \Xcal_{\mathsf{range}} + \xvec_k \boldsymbol{\Omega}^\top(k,:), \quad 
                    \Xcal_{\mathsf{corange}}(:,k) \gets \Xcal_{\mathsf{corange}}(:,k) + \boldsymbol{\Upsilon} \xvec_k, \quad 
                    \Xcal_{\mathsf{core}} \gets \Xcal_{\mathsf{core}} + (\boldsymbol{\Xi} \xvec_k) \boldsymbol{\Psi}^\top(k,:)
                \]
        \EndFor{}

        \State{Compute the core matrix \( \Cmat \in \R^{s\times s}\):} \Comment{\(\dagger \): Moore-Penrose pseudo-inverse}
            \[
                \Qmat_{\mathsf{range}}\Rmat_{\mathsf{range}} \gets \mathsf{qr}(\Xcal_{\mathsf{range}}), \quad \Qmat_{\mathsf{corange}}\Rmat_{\mathsf{corange}} \gets \mathsf{qr}(\Xcal_{\mathsf{corange}}^\top), \quad 
                \Cmat \gets {(\boldsymbol{\Xi}\Qmat_{\mathsf{range}})}^\dagger \,\Xcal_{\mathsf{core}}\,({(\boldsymbol{\Psi}\Qmat_{\mathsf{corange}})}^\dagger)\tran
            \]
        \State{Compute SVD of the core matrix: } \(\Vmat_c \Sigmamat_c \Wmat_c^\top \gets \mathsf{svd}(\Cmat)\)
        \State{Form SVD of \( \Xmat \):} \(\Vmat_r \gets \Qmat_{\mathsf{range}}\Vmat_c(:,1:r), ~ \Sigmamat_r \gets \Sigmamat_c(1:r,1:r), ~ \Wmat_r \gets \Qmat_{\mathsf{corange}} \Wmat_c(:,1:r) \)
        \State{\bfseries Return: }\(\Vmat_r,\ \Sigmamat_r,\ \Wmat_r \)
    \end{algorithmic}
\end{algorithm}
\section{Recursive Least-Squares Algorithms}\label{app:rls-algos}

\begin{algorithm}[H]
    \caption{RLS Algorithm}\label{alg:rls}
    \begin{algorithmic}[1]
        \Require{Tikhonov matrix \( \Gammamat = \Gammamat^\top > 0 \)}
        \State{\( \Pmat_0 = \Gammamat^{-1} \)}  \Comment{Initialize inverse correlation matrix}
        \State{\( \Omat_0 = \0_{d\times r} \)}  \Comment{Initialize operator matrix}
        \For{\( k = 1,2,3,\ldots,K \)}
            \State{Receive new data: \( \dvec_k, \rvec_k \)}
            \State{\( c_k = {\left(1 + \dvec_k \Pmat_{k-1} \dvec_k^\top \right)}^{-1} \)}  \Comment{Compute conversion factor}
            \State{\( \gvec_k = \Pmat_{k-1}\dvec_k^\top c_k \)} \Comment{Update Kalman gain}
            \State{\( \Pmat_k = \Pmat_{k-1} - \gvec_k\gvec_k^\top c_k^{-1} \)}  \Comment{Update inverse correlation matrix}
            \State{\( \boldsymbol{\xi}_k^- = \rvec_k - \dvec_k\Omat_{k-1} \)}  \Comment{Compute \textit{a priori} error}
            \State{\( \Omat_k = \Omat_{k-1} + \gvec_k\boldsymbol{\xi}_k^- \)} \Comment{Update operator matrix}
        \EndFor{}
        \State{\bfseries Return: }\( \Omat_k \)
    \end{algorithmic}
\end{algorithm}

\begin{algorithm}[H]
    \caption{Inverse QR-Decomposition RLS Algorithm}\label{alg:iqrrls}
    \begin{algorithmic}[1]
        \Require{Tikhonov matrix \( \Gammamat = \Gammamat^\top > 0 \) }
        \State{Initialize \( \Pmat_0^{1/2} \gets \Gammamat^{-1/2} \)} \Comment{Cholesky decomposition of inverse correlation matrix}
        \State{Initialize \( \Omat_0 \gets \mathbf{0}_{d \times r} \)} \Comment{Operator estimate}
        \For{\( k = 1 \) \textbf{to} \( K \)}
            \State{Receive new data: \( \dvec_k \), \( \rvec_k \)}
            \State{Form tranposed pre-array:}
            \[
                \Acal_k^\top = \begin{bmatrix}
                    1 & \mathbf{0}_{1 \times d} \\[0.3em]
                    \Pmat_{k-1}^{\top/2} \dvec_k^\top & \Pmat_{k-1}^{\top/2}
                \end{bmatrix}
            \]
            \State{Perform QR decomposition (or Givens rotations): \( \Thetamat_k\Bcal_k^\top  \gets \mathsf{qr}(\Acal_k^\top)\)}
            \State{Extract updated quantities:}
            \[
                c_k^{-1/2} \gets \Bcal_k^\top(1,1), \quad
                \gvec_k c_k^{-1/2} \gets \Bcal_k^\top(1,2:d+1), \quad
                \Pmat_k^{1/2} \gets {(\Bcal_k^\top(2:d+1,2:d+1))}^\top
            \]
            \State{Compute the a priori error: \( \boldsymbol{\xi}_k^- \gets \rvec_k - \dvec_k \Omat_{k-1} \)}
            \State{Update the operator estimate:}
            \[
                \Omat_k = \Omat_{k-1} + \left( \gvec_k c_k^{-1/2} \right) {\left( c_k^{-1/2} \right)}^{-1} \boldsymbol{\xi}_k^- = \Omat_{k-1} + \gvec_k \boldsymbol{\xi}_k^-
            \]
        \EndFor{}
        \State{\bfseries Return: }\( \Omat_K \)
    \end{algorithmic}
\end{algorithm}

\section{Proof of Operator Error Bounds}\label[appendix]{app:proof}

This section establishes the error bounds in Theorems~\ref{thm:operator-error-projection} and~\ref{thm:operator-error-reformulation}, which quantify how errors from streaming \gls*{svd} propagate through the data matrices into the learned operators. We begin by proving the general operator perturbation bound in Lemma~\ref{lem:operator-perturbation}, then derive the specific bounds for each streaming paradigm.

\subsection{General Operator Perturbation Bound}

The proof of Lemma~\ref{lem:operator-perturbation} relies on a classical perturbation result for pseudoinverses.

\begin{lemma}[Theorem 4.1 of~\cite{wedin1973perturbation}]\label{lem:wedin-perturbation}
    Let \( \Amat, \Bmat \in \R^{m \times n} \) with \( \mathrm{rank}(\Amat) = \mathrm{rank}(\Bmat) = \min \{m, n\} \). Then
    \begin{equation*}
        \| \Amat^\dagger - \Bmat^\dagger \|_2 \le \alpha \| \Amat^\dagger \|_2 \| \Bmat^\dagger \|_2 \| \Amat - \Bmat \|_2,
    \end{equation*}    
    where \( \alpha = \sqrt{2} \) if \( m \ne n \) and \( \alpha = 1 \) if \( m = n \).
\end{lemma}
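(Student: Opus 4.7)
The plan is to reduce Wedin's bound to a standard algebraic identity for the difference of pseudoinverses and then exploit the full-rank hypothesis together with an orthogonal-decomposition argument to extract the sharp constant.

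First, I would derive the Stewart--Wedin identity
\begin{equation*}
\Bmat^\dagger - \Amat^\dagger = -\Bmat^\dagger(\Bmat - \Amat)\Amat^\dagger + \Bmat^\dagger(\Id - \Amat\Amat^\dagger) - (\Id - \Bmat^\dagger \Bmat)\Amat^\dagger,
\end{equation*}
which follows from the telescoping equality $\Bmat^\dagger - \Amat^\dagger = \Bmat^\dagger(\Id - \Bmat\Amat^\dagger) - (\Id - \Bmat^\dagger \Bmat)\Amat^\dagger$ combined with the substitution $\Id - \Bmat\Amat^\dagger = (\Amat - \Bmat)\Amat^\dagger + (\Id - \Amat\Amat^\dagger)$. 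The three resulting terms may be interpreted as a shear by the perturbation and two ``mismatch'' terms measuring discrepancies of ranges and cokernels.

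Next, the full-rank hypothesis $\mathrm{rank}(\Amat) = \mathrm{rank}(\Bmat) = \min\{m,n\}$ kills at least one of the two mismatch terms. If $m \ge n$, both matrices have full column rank, so $\Bmat^\dagger \Bmat = \Id_n$ and the third term vanishes; if $m \le n$, both have full row rank, so $\Amat\Amat^\dagger = \Id_m$ and the second term vanishes. In the square case $m = n$, both vanish simultaneously and only the shear term $-\Bmat^\dagger(\Bmat - \Amat)\Amat^\dagger$ survives, so submultiplicativity of $\|\cdot\|_2$ yields $\alpha = 1$ directly. For the rectangular case (say $m > n$), I would rewrite the surviving mismatch term using $\Amat^\top(\Id - \Amat\Amat^\dagger) = 0$ to obtain $\Bmat^\dagger(\Id - \Amat\Amat^\dagger) = (\Bmat^\top\Bmat)^{-1}(\Bmat - \Amat)^\top(\Id - \Amat\Amat^\dagger)$, placing it in a form that is linear in $\Bmat - \Amat$; the wide case is analogous via the adjoint.

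The main obstacle is sharpening the loose constant $2$ coming from a naive triangle inequality down to the stated $\sqrt{2}$. The key observation is that the two surviving terms act on orthogonal subspaces of the input: the shear term annihilates $\mathrm{range}(\Amat)^\perp$ because $\Amat^\dagger$ does, while the mismatch term annihilates $\mathrm{range}(\Amat)$ because $(\Id - \Amat\Amat^\dagger)$ does. I would therefore decompose any test vector $x$ orthogonally as $x = \Amat\Amat^\dagger x + (\Id - \Amat\Amat^\dagger)x$, bound each summand on its own component separately, and combine via the Pythagorean identity together with Cauchy--Schwarz to absorb the two individual spectral-norm bounds into a single factor of $\sqrt{2}\,\|\Amat^\dagger\|_2\|\Bmat^\dagger\|_2\|\Amat - \Bmat\|_2$. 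This orthogonal-decomposition step is what distinguishes Wedin's sharp bound from cruder triangle-inequality estimates and is the technically delicate part of the argument.
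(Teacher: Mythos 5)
The paper does not prove this lemma at all---it is imported verbatim as Theorem~4.1 of Wedin~(1973) and used as a black box in the proof of Lemma~\ref{lem:operator-perturbation}---so there is no in-paper argument to compare against; what follows is an assessment of your reconstruction on its own terms. Your decomposition identity is correct, the observation that the full-rank hypothesis kills one (or both) of the mismatch terms is right, and the orthogonal-decomposition/Pythagoras step at the end is the correct mechanism for improving the constant from $2$ to $\sqrt{2}$: in the tall case the shear term $-\Bmat^\dagger(\Bmat-\Amat)\Amat^\dagger$ and the surviving mismatch term $\Bmat^\dagger(\Id-\Amat\Amat^\dagger)$ do annihilate complementary orthogonal input subspaces, so $\|T_1+T_2\|_2 \le (\|T_1\|_2^2+\|T_2\|_2^2)^{1/2}$.

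The genuine gap is in the bound you would obtain for the mismatch term. Your rewriting $\Bmat^\dagger(\Id-\Amat\Amat^\dagger)=(\Bmat^\top\Bmat)^{-1}(\Bmat-\Amat)^\top(\Id-\Amat\Amat^\dagger)$ is correct, but since $\|(\Bmat^\top\Bmat)^{-1}\|_2=\|\Bmat^\dagger\|_2^2$, it yields only $\|T_2\|_2\le\|\Bmat^\dagger\|_2^2\,\|\Amat-\Bmat\|_2$, not $\|\Amat^\dagger\|_2\|\Bmat^\dagger\|_2\|\Amat-\Bmat\|_2$. Feeding that into your Pythagorean step gives $(\|\Amat^\dagger\|_2^2\|\Bmat^\dagger\|_2^2+\|\Bmat^\dagger\|_2^4)^{1/2}\|\Amat-\Bmat\|_2$, which exceeds the claimed $\sqrt{2}\,\|\Amat^\dagger\|_2\|\Bmat^\dagger\|_2\|\Amat-\Bmat\|_2$ whenever $\|\Bmat^\dagger\|_2>\|\Amat^\dagger\|_2$. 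The missing ingredient is the step where the equal-rank hypothesis actually earns its keep for this term: for orthogonal projectors onto subspaces of equal dimension one has $\|(\Id-\Amat\Amat^\dagger)\Bmat\Bmat^\dagger\|_2=\|(\Id-\Bmat\Bmat^\dagger)\Amat\Amat^\dagger\|_2$, and since $(\Id-\Bmat\Bmat^\dagger)\Amat\Amat^\dagger=(\Id-\Bmat\Bmat^\dagger)(\Amat-\Bmat)\Amat^\dagger$, writing $T_2=\Bmat^\dagger(\Bmat\Bmat^\dagger)(\Id-\Amat\Amat^\dagger)$ gives $\|T_2\|_2\le\|\Bmat^\dagger\|_2\|\Amat^\dagger\|_2\|\Amat-\Bmat\|_2$ as required. (Alternatively, since both sides of the claimed inequality are symmetric in $\Amat\leftrightarrow\Bmat$, you may relabel so that $\|\Bmat^\dagger\|_2\le\|\Amat^\dagger\|_2$, after which your cruder bound suffices; either way, the step must be stated, as your argument currently does not deliver the constant in the lemma.)
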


\begin{proof}[Proof of Lemma~\ref{lem:operator-perturbation}]
    Let \( \bbar\Dmat \) and \( \bbar\Rmat \) be the data matrices defined in~\eqref{eqn:opinf-pinv-regularized}, and let \( \ot\Dmat = \bbar\Dmat + [\Emat_D^\top,\0_{d\times d}]\tran \) and \( \ot\Rmat = \bbar\Rmat + [\Emat_R^\top,\0_{r\times d}]\tran \) denote the perturbed data matrices. The corresponding \gls{ls} solutions are \( \Omat = \bbar\Dmat^\dagger\bbar\Rmat \) and \( \ot\Omat = \ot\Dmat^\dagger \ot\Rmat \). Adding and subtracting \( \bbar\Dmat^\dagger\ot\Rmat \) and applying the triangle inequality yields
    \begin{align*}
        \| \Omat - \ot\Omat \|_F
        &= \| \bbar\Dmat^\dagger \bbar\Rmat - \ot\Dmat^\dagger \ot\Rmat \|_F \\
        &= \| \bbar\Dmat^\dagger\bbar\Rmat - \bbar\Dmat^\dagger\ot\Rmat + \bbar\Dmat^\dagger\ot\Rmat - \ot\Dmat^\dagger\ot\Rmat \|_F \\
        &\le \| \bbar\Dmat^\dagger(\bbar\Rmat - \ot\Rmat) \|_F + \| (\bbar\Dmat^\dagger - \ot\Dmat^\dagger)\ot\Rmat \|_F.
    \end{align*}
    Applying the submultiplicativity property \( \| \Amat\Bmat \|_F \leq \| \Amat \|_2 \|\Bmat \|_F \) to each term gives
    \begin{equation*}
        \| \Omat - \ot\Omat \|_F \le \| \bbar\Dmat^\dagger \|_2 \|\bbar\Rmat - \ot\Rmat \|_F + \| \bbar\Dmat^\dagger - \ot\Dmat^\dagger \|_2 \|\ot\Rmat \|_F.
    \end{equation*}
    Since \( \| [\Emat_R^\top,\0_{r\times d}]\tran \|_F = \| \Emat_R\|_F \) and \( \| [\Emat_D^\top,\0_{d\times d}]\tran \|_2 = \| \Emat_D\|_2 \), we have \( \|\bbar\Rmat - \ot\Rmat \|_F = \|\Emat_R\|_F \) and \( \|\bbar\Dmat - \ot\Dmat \|_2 = \|\Emat_D\|_2 \). Applying Lemma~\ref{lem:wedin-perturbation} to bound \( \| \bbar\Dmat^\dagger - \ot\Dmat^\dagger \|_2 \) completes the proof.
\end{proof}

\subsection{Operator Error Bounds for Each Paradigm}

We now prove Theorems~\ref{thm:operator-error-projection} and~\ref{thm:operator-error-reformulation} by deriving explicit bounds on the data matrix perturbations \( \Emat_D \) and \( \Emat_R \) for each streaming paradigm, then applying Lemma~\ref{lem:operator-perturbation}. We establish several auxiliary results needed for both proofs.

\begin{lemma}[Norm of the pseudoinverse]\label{lem:norm-pinv}
    For any \( \Amat \in \R^{m \times n} \), we have \( \| \Amat^\dagger \|_2 = \sigma_{\min}(\Amat)^{-1} \), where \( \sigma_{\min}(\Amat) \) denotes the smallest nonzero singular value of \( \Amat \).
\end{lemma}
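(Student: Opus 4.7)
The plan is to prove the identity by direct computation using the singular value decomposition, which is the standard approach for results about pseudoinverses and spectral norms. Since both $\| \cdot \|_2$ and the Moore-Penrose pseudoinverse are unitarily invariant, the problem reduces to the case of a diagonal matrix, where the claim is immediate.

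Specifically, I would begin by writing the thin SVD $\Amat = \Umat_r \Sigmamat_r \Vmat_r^\top$, where $r = \mathrm{rank}(\Amat)$, $\Umat_r \in \R^{m \times r}$ and $\Vmat_r \in \R^{n \times r}$ have orthonormal columns, and $\Sigmamat_r = \mathrm{diag}(\sigma_1, \ldots, \sigma_r)$ with $\sigma_1 \geq \cdots \geq \sigma_r = \sigma_{\min}(\Amat) > 0$. The Moore-Penrose pseudoinverse then admits the closed-form expression $\Amat^\dagger = \Vmat_r \Sigmamat_r^{-1} \Umat_r^\top$, where $\Sigmamat_r^{-1} = \mathrm{diag}(1/\sigma_1, \ldots, 1/\sigma_r)$. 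Because $\Umat_r$ and $\Vmat_r$ have orthonormal columns, left or right multiplication by these factors preserves the spectral norm, giving $\| \Amat^\dagger \|_2 = \| \Sigmamat_r^{-1} \|_2$. The spectral norm of a diagonal matrix equals its largest diagonal entry in absolute value, so $\| \Sigmamat_r^{-1} \|_2 = \max_{1 \leq i \leq r} 1/\sigma_i = 1/\sigma_r = 1/\sigma_{\min}(\Amat)$, which completes the argument.

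There is no real obstacle here; the only subtlety to flag is the convention that $\sigma_{\min}(\Amat)$ refers to the smallest \emph{nonzero} singular value, which is exactly what makes $\Sigmamat_r^{-1}$ well-defined and ensures the identity holds also in the rank-deficient case. This is a textbook fact (see, e.g., Golub and Van Loan), and I would likely cite it rather than belabor the derivation, keeping the appendix focused on the more substantive perturbation analysis that follows.
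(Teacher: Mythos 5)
Your proof is correct and follows exactly the same route as the paper's one-line argument, which also reduces the claim to the SVD $\Amat = \Umat\Sigmamat\Vmat^\top$ and the closed form $\Amat^\dagger = \Vmat\Sigmamat^\dagger\Umat^\top$. You simply spell out the unitary invariance and the diagonal-norm step that the paper leaves implicit, and your remark about the smallest \emph{nonzero} singular value handling the rank-deficient case is a fine clarification.
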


\begin{proof}
    This follows directly from the \gls{svd} \( \Amat = \Umat \Sigmamat \Vmat^\top \) and the pseudoinverse definition \( \Amat^\dagger = \Vmat \Sigmamat^\dagger \Umat^\top \).
\end{proof}

The following result from Weyl's inequality~\cite[Cor.\,8.6.2]{golub2013Matrix} bounds the singular value perturbations.

\begin{lemma}[Singular value perturbation]\label{lem:weyl}
    If \( \| \Xmat - \ot{\Xmat} \|_2 \leq \varepsilon \), then \( |\sigma_j - \ot{\sigma}_j| \leq \varepsilon \) for all \( j = 1, \ldots, r \).
\end{lemma}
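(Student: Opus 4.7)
The plan is to prove the singular value perturbation bound via the Courant--Fischer min-max characterization, which gives the cleanest path and avoids the intermediate step of working with Hermitian dilations. Recall that for any matrix \( \Amat \in \R^{m \times n} \), the \( j \)-th singular value admits the representation
\begin{equation*}
    \sigma_j(\Amat) = \max_{\substack{S \subseteq \R^n \\ \dim(S) = j}} \min_{\substack{\vvec \in S \\ \|\vvec\|_2 = 1}} \|\Amat\vvec\|_2,
\end{equation*}
so the strategy is to compare \( \|\Xmat\vvec\|_2 \) and \( \|\ot\Xmat\vvec\|_2 \) uniformly over unit vectors \( \vvec \), then transfer the pointwise comparison through the max-min.

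Specifically, I would first observe that by the reverse triangle inequality and the definition of the operator 2-norm,
\begin{equation*}
    \bigl| \|\Xmat\vvec\|_2 - \|\ot\Xmat\vvec\|_2 \bigr| \le \|(\Xmat - \ot\Xmat)\vvec\|_2 \le \|\Xmat - \ot\Xmat\|_2 \le \varepsilon
\end{equation*}
for every unit vector \( \vvec \in \R^n \). Fix any \( j \)-dimensional subspace \( S \subseteq \R^n \); the uniform bound gives
\begin{equation*}
    \min_{\vvec \in S,\,\|\vvec\|_2=1} \|\ot\Xmat\vvec\|_2 \le \min_{\vvec \in S,\,\|\vvec\|_2=1} \|\Xmat\vvec\|_2 + \varepsilon,
\end{equation*}
and taking the maximum over all \( j \)-dimensional subspaces on both sides yields \( \ot\sigma_j \le \sigma_j + \varepsilon \). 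Swapping the roles of \( \Xmat \) and \( \ot\Xmat \) produces \( \sigma_j \le \ot\sigma_j + \varepsilon \), and combining gives \( |\sigma_j - \ot\sigma_j| \le \varepsilon \) for each \( j = 1, \ldots, r \).

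There is essentially no hard step here; this is a textbook consequence of the Courant--Fischer characterization, and the main judgment call is merely stylistic (whether to invoke Weyl's inequality for the eigenvalues of the Hermitian dilation \( \begin{bmatrix} 0 & \Xmat \\ \Xmat^\top & 0 \end{bmatrix} \) as a black box, or to give the short direct argument above). I would favor the direct min-max proof since it is self-contained and keeps the appendix tight, and since the result will only be used downstream to control the scalar quantity \( \varepsilon \) in the perturbations \( \Emat_D \) and \( \Emat_R \) appearing in Theorems~\ref{thm:operator-error-projection} and~\ref{thm:operator-error-reformulation}.
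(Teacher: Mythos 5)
Your proof is correct. The paper itself does not prove this lemma at all: it states it as a known consequence of Weyl's inequality, citing Corollary~8.6.2 of Golub and Van~Loan, and moves on. Your Courant--Fischer argument is precisely one of the two standard proofs of that cited result (the other being Weyl's eigenvalue inequality applied to the Hermitian dilation \( \bigl[\begin{smallmatrix} 0 & \Xmat \\ \Xmat^\top & 0 \end{smallmatrix}\bigr] \), which you correctly identify as the alternative). Each step checks out: the reverse triangle inequality gives the uniform bound \( \bigl|\,\|\Xmat\vvec\|_2 - \|\ot\Xmat\vvec\|_2\,\bigr| \le \varepsilon \) over unit vectors, this passes through the inner minimum over any fixed \( j \)-dimensional subspace (the minimum is attained on the compact unit sphere of \( S \), so evaluating \( \ot\Xmat \) at the minimizer for \( \Xmat \) gives the inequality), and then through the outer maximum; symmetry in \( \Xmat \) and \( \ot\Xmat \) closes the absolute value. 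The only trade-off is stylistic: the paper's citation keeps the appendix minimal, while your self-contained version adds a few lines but removes the external dependency. Either is acceptable; if you include the direct proof, state the max--min characterization with a reference (e.g., Horn and Johnson) since it is itself a nontrivial fact being taken as the starting point.
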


\subsubsection{Proof of Theorem~\ref{thm:operator-error-projection}}

\begin{proof}
We derive bounds on the data matrix perturbations for the projection paradigms, then combine them with Lemma~\ref{lem:operator-perturbation} to obtain the operator error bound.

\paragraph{Bounding the left-hand side perturbation \( \Emat_D \)}
To systematically bound \( \Emat_D \) for the projection paradigms, we introduce the full-state data matrix
\begin{equation*}
    \mathbb{D} = \begin{bmatrix}
        \Xmat^\top & (\Xmat \odot \Xmat)^\top & \Umat^\top & \mathbf{1}_K
    \end{bmatrix} \in \R^{K \times (n + n^2 + m + 1)},
\end{equation*}
and the projection matrices
\begin{equation*}
    \mathbb{V}_r = \begin{bmatrix}
        \Vmat_r & 0 & 0 & 0 \\
        0 & \Vmat_r \otimes \Vmat_r & 0 & 0 \\
        0 & 0 & \Id_m & 0 \\
        0 & 0 & 0 & 1
    \end{bmatrix}, \quad
    \ot{\mathbb{V}}_r = \begin{bmatrix}
        \ot\Vmat_r & 0 & 0 & 0 \\
        0 & \ot\Vmat_r \otimes \ot\Vmat_r & 0 & 0 \\
        0 & 0 & \Id_m & 0 \\
        0 & 0 & 0 & 1
    \end{bmatrix},
\end{equation*}
where \( \mathbb{V}_r, \ot{\mathbb{V}}_r \in \R^{(n + n^2 + m + 1) \times (r + r^2 + m + 1)} \) have orthonormal columns. This notation allows us to express the data matrices compactly as
\begin{equation*}
    \bbar\Dmat = [(\mathbb{D} \mathbb{V}_r)\tran,\, \Gammamat^{1/2}]\tran \quad \text{and} \quad 
    \ot\Dmat = [(\mathbb{D} \ot{\mathbb{V}}_r)\tran,\, \Gammamat^{1/2}]\tran.
\end{equation*}
By submultiplicativity,
\begin{equation}\label{eqn:proof-ED-proj-start}
    \| \Emat_D \|_2 = \| \mathbb{D} ( \mathbb{V}_r - \ot{\mathbb{V}}_r ) \|_2 \le \| \mathbb{D} \|_2 \| \mathbb{V}_r - \ot{\mathbb{V}}_r \|_2.
\end{equation}

We first bound \( \| \mathbb{V}_r - \ot{\mathbb{V}}_r \|_2 \). The block-diagonal structure implies~\cite[p.\,17]{tropp2016expected}
\begin{equation*}
    \| \mathbb{V}_r - \ot{\mathbb{V}}_r \|_2 = \max \bigl\{ \| \Vmat_r - \ot\Vmat_r \|_2, \| \Vmat_r \otimes \Vmat_r - \ot\Vmat_r \otimes \ot\Vmat_r \|_2 \bigr\}.
\end{equation*}
Since the Frobenius norm dominates the spectral norm and \( \| \Vmat_r - \ot\Vmat_r \|_F = \tau_v \), we have \( \| \Vmat_r - \ot\Vmat_r \|_2 \le \tau_v \). For the Kronecker product term, we add and subtract \( \ot\Vmat_r \otimes \Vmat_r \) to obtain
\begin{equation*}
    \Vmat_r \otimes \Vmat_r - \ot\Vmat_r \otimes \ot\Vmat_r = (\Vmat_r - \ot\Vmat_r) \otimes \Vmat_r + \ot\Vmat_r \otimes (\Vmat_r - \ot\Vmat_r).
\end{equation*}
The triangle inequality and the spectral norm property \( \| \Amat \otimes \Bmat \|_2 = \| \Amat \|_2 \| \Bmat \|_2 \)~\cite[Thm.\,8]{lancaster1972norms} yield
\begin{align*}
    \| \Vmat_r \otimes \Vmat_r - \ot\Vmat_r \otimes \ot\Vmat_r \|_2 
    &\le \| \Vmat_r - \ot\Vmat_r \|_2 \| \Vmat_r \|_2 + \| \ot\Vmat_r \|_2 \| \Vmat_r - \ot\Vmat_r \|_2 \\
    &= 2\| \Vmat_r - \ot\Vmat_r \|_2 \le 2\tau_v,
\end{align*}
where we used \( \| \Vmat_r \|_2 = \| \ot\Vmat_r \|_2 = 1 \) since both matrices have orthonormal columns. Thus, \( \| \mathbb{V}_r - \ot{\mathbb{V}}_r \|_2 \le 2\tau_v \).

Next, we bound \( \| \mathbb{D} \|_2 \). With bounded inputs \( \| \Umat \|_2 = \eta \) for some \( \eta > 0 \), we have
\begin{equation*}
    \| \mathbb{D} \|_2^2 \le \| \Xmat \|_2^2 + \| \Xmat \odot \Xmat \|_2^2 + \eta^2 + K.
\end{equation*}
For the Khatri-Rao product term, we use the identity \( (\Xmat \odot \Xmat)^\top (\Xmat \odot \Xmat) = (\Xmat^\top \Xmat) \circ (\Xmat^\top \Xmat) \), where \( \circ \) denotes the Hadamard product. By~\cite[Thm.\,5.5.1]{horn1994topics}, \( \sigma_1((\Xmat^\top \Xmat) \circ (\Xmat^\top \Xmat)) \le \sigma_1(\Xmat^\top \Xmat)^2 = \| \Xmat \|_2^4 \). Therefore,
\begin{equation*}
    \| \mathbb{D} \|_2 \le \bigl( \sigma_1^2 + \sigma_1^4 + \eta^2 + K \bigr)^{1/2} =: \beta_1',
\end{equation*}
where \( \sigma_1 = \| \Xmat \|_2 \). Substituting into~\eqref{eqn:proof-ED-proj-start} gives
\begin{equation}\label{eqn:proof-ED-proj-final}
    \| \Emat_D \|_2 \le 2\beta_1' \tau_v = \frac{\beta_1}{\sqrt{\min\{n,K\}}} \tau_v,
\end{equation}
where \( \beta_1 = 2\beta_1' \sqrt{\min\{n,K\}} \).

\paragraph{Bounding the right-hand side perturbation \( \Emat_R \)}
For the projection paradigms, \( \Emat_R = (\Vmat_r - \ot\Vmat_r)^\top \Xdot \). Using the finite-difference assumption \( \Xdot = \Xmat\Deltamat_K \) and submultiplicativity,
\begin{equation}\label{eqn:proof-ER-proj}
    \| \Emat_R \|_F = \| (\Vmat_r - \ot\Vmat_r)^\top \Xmat \Deltamat_K \|_F \le \| \Vmat_r - \ot\Vmat_r \|_F \| \Xmat \|_2 \| \Deltamat_K \|_2 = \sigma_1 \|\Deltamat_K \|_2 \tau_v.
\end{equation}

\paragraph{Bounding the pseudoinverse norms}
In \gls*{opinf}, the data matrix \( \Dmat \) is commonly ill-conditioned due to linear dependence among columns arising from small time steps and the quadratic nonlinearity~\cite{peherstorferW2016,geelen2022Localized}. When the state data exhibits rapid spectral decay, quadratic terms amplify this effect by squaring the singular values, further reducing \( \sigma_{\min}(\Dmat) \). The regularization parameter is typically chosen to satisfy \( \sqrt{\gamma_{\min}} = \min_j \sqrt{\gamma_j} \gg \sigma_{\min}(\Dmat) \). Similar reasoning applies to \( \ot\Dmat \), giving \( \sqrt{\gamma_{\min}} \gg \sigma_{\min}(\Dmat + \Emat_D) \).

Hence, Lemma~\ref{lem:norm-pinv} yields
\begin{equation}\label{eqn:proof-Dmat-pinv}
    \| \bbar\Dmat^\dagger \|_2 = \sigma_{\min}(\bbar\Dmat)^{-1} = \bigl[\sigma_{\min}(\Dmat)^2 + \gamma_{\min}\bigr]^{-1/2} \approx \gamma_{\min}^{-1/2},
\end{equation}
and similarly,
\begin{equation}\label{eqn:proof-otDmat-pinv}
    \| \ot\Dmat^\dagger \|_2 = \bigl[\sigma_{\min}(\Dmat + \Emat_D)^2 + \gamma_{\min}\bigr]^{-1/2} \approx \gamma_{\min}^{-1/2}.
\end{equation}

For \( \| \ot\Rmat \|_F \), we use \( \ot\Rmat = \ot\Vmat_r^\top \Xdot = \ot\Vmat_r^\top \Xmat\Deltamat_K \) to obtain
\begin{equation}\label{eqn:proof-Rmat-proj}
    \| \ot\Rmat \|_F \le \| \ot\Vmat_r \|_2 \| \Xmat \|_F \| \Deltamat_K \|_2 \le \sigma_1 \sqrt{\min\{n, K\}} \| \Deltamat_K \|_2,
\end{equation}
where we used \( \| \ot\Vmat_r \|_2 = 1 \) and \( \| \Xmat \|_F \le \sigma_1 \sqrt{\min\{n, K\}} \).

\paragraph{Completing the proof}
Substituting the bounds from~\eqref{eqn:proof-ED-proj-final}, \eqref{eqn:proof-ER-proj}, \eqref{eqn:proof-Dmat-pinv}, \eqref{eqn:proof-otDmat-pinv}, and~\eqref{eqn:proof-Rmat-proj} into Lemma~\ref{lem:operator-perturbation} yields
\begin{align*}
    \| \Omat - \ot\Omat \|_F 
    &\le \frac{\alpha}{\gamma_{\min}} \cdot \frac{\beta_1}{\sqrt{\min\{n,K\}}} \tau_v \cdot \sigma_1 \sqrt{\min\{n, K\}} \| \Deltamat_K \|_2 + \frac{1}{\sqrt{\gamma_{\min}}} \cdot \sigma_1 \| \Deltamat_K \|_2 \tau_v = \frac{\sigma_1 \| \Deltamat_K \|_2}{\sqrt{\gamma_{\min}}} \tau_v \Bigl( 1 + \frac{\alpha \beta_1}{\sqrt{\gamma_{\min}}} \Bigr),
\end{align*}
which completes the proof.
\end{proof}

\subsubsection{Proof of Theorem~\ref{thm:operator-error-reformulation}}

\begin{proof}
We follow a similar strategy as for Theorem~\ref{thm:operator-error-projection}, deriving bounds on \( \Emat_D \) and \( \Emat_R \) specific to the reformulation paradigms.

\paragraph{Bounding the left-hand side perturbation \( \Emat_D \)}
For the reformulation paradigms, the data matrix perturbation has the block structure
\begin{equation*}
    \Emat_D = \begin{bmatrix}
        \Wmat_r \Sigmamat_r - \ot\Wmat_r \ot\Sigmamat_r &
        (\Wmat_r \Sigmamat_r) \otimes (\Wmat_r\Sigmamat_r) - (\ot\Wmat_r \ot\Sigmamat_r) \otimes (\ot\Wmat_r\ot\Sigmamat_r) &
        \mathbf{0} &
        \mathbf{0}
    \end{bmatrix}.
\end{equation*}
This implies
\begin{equation}\label{eqn:proof-ED-reform}
    \| \Emat_D \|_2^2 \le \| \Wmat_r \Sigmamat_r - \ot\Wmat_r \ot\Sigmamat_r \|_2^2 + \| (\Wmat_r \Sigmamat_r) \otimes (\Wmat_r\Sigmamat_r) - (\ot\Wmat_r \ot\Sigmamat_r) \otimes (\ot\Wmat_r\ot\Sigmamat_r) \|_2^2.
\end{equation}

For the first term, we add and subtract \( \ot\Wmat_r \Sigmamat_r \):
\begin{equation*}
    \Wmat_r \Sigmamat_r - \ot\Wmat_r \ot\Sigmamat_r = (\Wmat_r - \ot\Wmat_r)\Sigmamat_r + \ot\Wmat_r(\Sigmamat_r - \ot\Sigmamat_r).
\end{equation*}
Applying the triangle inequality, submultiplicativity, the relation \( \| \Wmat_r - \ot{\Wmat}_r \|_F = \tau_w \), and Lemma~\ref{lem:weyl} yields
\begin{equation}\label{eqn:proof-first-term-reform}
    \| \Wmat_r \Sigmamat_r - \ot\Wmat_r \ot\Sigmamat_r \|_F
    \le \sigma_1 \| \Wmat_r - \ot\Wmat_r \|_F + \| \ot\Wmat_r \|_2 \| \Sigmamat_r - \ot\Sigmamat_r \|_F 
    \le \sigma_1 \tau_w + \sqrt{r}\, \varepsilon =: \phi_w,
\end{equation}
where we used \( \| \ot\Wmat_r \|_2 = 1 \) and \( \| \Sigmamat_r - \ot\Sigmamat_r \|_F \le \sqrt{r}\varepsilon \). Since the Frobenius norm dominates the spectral norm, \( \| \Wmat_r \Sigmamat_r - \ot\Wmat_r \ot\Sigmamat_r \|_2 \le \phi_w \).

For the Kronecker product term in~\eqref{eqn:proof-ED-reform}, we add and subtract \( (\ot\Wmat_r \ot\Sigmamat_r) \otimes (\Wmat_r \Sigmamat_r) \) and apply the triangle inequality and~\cite[Thm.\,8]{lancaster1972norms}:
\begin{equation*}
    \| (\Wmat_r \Sigmamat_r) \otimes (\Wmat_r\Sigmamat_r) - (\ot\Wmat_r \ot\Sigmamat_r) \otimes (\ot\Wmat_r\ot\Sigmamat_r) \|_2
    \le \bigl(\| \Wmat_r \Sigmamat_r \|_2 + \| \ot\Wmat_r \ot\Sigmamat_r \|_2\bigr) \| \Wmat_r \Sigmamat_r - \ot\Wmat_r \ot\Sigmamat_r \|_2.
\end{equation*}
Since \( \| \Wmat_r \Sigmamat_r \|_2 = \sigma_1 \) and \( \| \ot\Wmat_r \ot\Sigmamat_r \|_2 = \max_j \ot\sigma_j \le \sigma_1 + \varepsilon \) by Lemma~\ref{lem:weyl}, we obtain
\begin{equation}\label{eqn:proof-second-term-reform}
    \| (\Wmat_r \Sigmamat_r) \otimes (\Wmat_r\Sigmamat_r) - (\ot\Wmat_r \ot\Sigmamat_r) \otimes (\ot\Wmat_r\ot\Sigmamat_r) \|_2 \le (2\sigma_1 + \varepsilon)\phi_w.
\end{equation}

Substituting~\eqref{eqn:proof-first-term-reform} and~\eqref{eqn:proof-second-term-reform} into~\eqref{eqn:proof-ED-reform} and taking square roots gives
\begin{equation*}
    \| \Emat_D \|_2 \le \phi_w \sqrt{1 + (2\sigma_1 + \varepsilon)^2}.
\end{equation*}
Expanding \( \phi_w = \sigma_1 \tau_w + \sqrt{r}\varepsilon \) and defining \( \beta_2 = \sqrt{r}(\sigma_1 + \varepsilon)\sqrt{1 + (2\sigma_1 + \varepsilon)^2} \) yields
\begin{equation}\label{eqn:proof-ED-reform-final}
    \| \Emat_D \|_2 = \frac{\sigma_1\beta_2}{\sqrt{r}(\sigma_1 + \varepsilon)}\tau_w + \frac{\beta_2}{\sigma_1 + \varepsilon}\varepsilon.
\end{equation}

\paragraph{Bounding the right-hand side perturbation \( \Emat_R \)}
For reformulation paradigms, \( \Rmat = \Deltamat_K^\top \Wmat_r \Sigmamat_r \) and \( \ot\Rmat = \Deltamat_K^\top \ot\Wmat_r \ot\Sigmamat_r \), so
\begin{equation}\label{eqn:proof-ER-reform}
    \| \Emat_R \|_F = \| \Deltamat_K^\top ( \Wmat_r \Sigmamat_r - \ot\Wmat_r \ot\Sigmamat_r ) \|_F \le \| \Deltamat_K \|_2 \phi_w 
    = \sigma_1 \| \Deltamat_K \|_2 \tau_w + \sqrt{r} \| \Deltamat_K \|_2 \varepsilon.
\end{equation}

\paragraph{Bounding \( \| \ot\Rmat \|_F \)}
Since \( \ot\Rmat = \Deltamat_K^\top \ot\Wmat_r \ot\Sigmamat_r \) and \( \ot\Wmat_r \) has orthonormal columns,
\begin{equation}\label{eqn:proof-Rmat-reform-start}
    \| \ot\Rmat \|_F \le \| \Deltamat_K \|_2 \| \ot\Sigmamat_r \|_F.
\end{equation}
To bound \( \| \ot\Sigmamat_r \|_F \), let \( \sigmavec = [\sigma_1, \ldots, \sigma_r]^\top \) and \( \deltavec = [\delta_1, \ldots, \delta_r]^\top \) where \( \delta_j = \ot\sigma_j - \sigma_j \) satisfies \( |\delta_j| \le \varepsilon \) by Lemma~\ref{lem:weyl}. Then
\begin{equation*}
    \| \ot\Sigmamat_r \|_F^2 = \sum_{j=1}^r (\sigma_j + \delta_j)^2 = \| \Sigmamat_r \|_F^2 + 2 \sigmavec^\top \deltavec + \| \deltavec \|_2^2.
\end{equation*}
The Cauchy-Schwarz inequality gives \( | \sigmavec^\top \deltavec | \le \| \sigmavec \|_2 \| \deltavec \|_2 \le \| \Sigmamat_r \|_F \cdot \sqrt{r}\varepsilon \), and \( \| \deltavec \|_2 \le \sqrt{r}\varepsilon \). Therefore,
\begin{equation*}
    \| \ot\Sigmamat_r \|_F \le \| \Sigmamat_r \|_F + \sqrt{r}\varepsilon \le \sigma_1\sqrt{r} + \sqrt{r}\varepsilon = \sqrt{r}(\sigma_1 + \varepsilon).
\end{equation*}
Substituting into~\eqref{eqn:proof-Rmat-reform-start} yields
\begin{equation}\label{eqn:proof-Rmat-reform}
    \| \ot\Rmat \|_F \le \| \Deltamat_K \|_2 \sqrt{r}(\sigma_1 + \varepsilon).
\end{equation}

\paragraph{Completing the proof}
The bounds for \( \| \bbar\Dmat^\dagger \|_2 \) and \( \| \ot\Dmat^\dagger \|_2 \) follow from~\eqref{eqn:proof-Dmat-pinv} and~\eqref{eqn:proof-otDmat-pinv} as in the projection case. Substituting the expressions from~\eqref{eqn:proof-ED-reform-final}, \eqref{eqn:proof-ER-reform}, \eqref{eqn:proof-Dmat-pinv}, \eqref{eqn:proof-otDmat-pinv}, and~\eqref{eqn:proof-Rmat-reform} into Lemma~\ref{lem:operator-perturbation} gives
\begin{align*}
    \| \Omat - \ot\Omat \|_F 
    &\le \frac{\alpha}{\gamma_{\min}} \cdot \Biggl( \frac{\sigma_1\beta_2}{\sqrt{r}(\sigma_1 + \varepsilon)}\tau_w + \frac{\beta_2}{\sigma_1 + \varepsilon}\varepsilon \Biggr) \cdot \| \Deltamat_K \|_2 \sqrt{r}(\sigma_1 + \varepsilon) + \frac{1}{\sqrt{\gamma_{\min}}} \cdot \| \Deltamat_K \|_2 (\sigma_1 \tau_w + \sqrt{r}\varepsilon).
\end{align*}
Collecting terms and simplifying yields
\begin{equation*}
    \| \Omat - \ot\Omat \|_F \le \frac{\sigma_1 \| \Deltamat_K \|_2}{\sqrt{\gamma_{\min}}} \tau_w \Bigl( 1 + \frac{\alpha \beta_2}{\sqrt{\gamma_{\min}}} \Bigr) + \frac{(\alpha \beta_2 + \sqrt{\gamma_{\min}}) \sqrt{r} \| \Deltamat_K \|_2}{\gamma_{\min}} \varepsilon,
\end{equation*}
which completes the proof.
\end{proof}

\section{Additional Turbulent Channel Flow Field Predictions}\label{app:more-channel-plots}

\begin{figure}[H]
    \centering
    \includegraphics[width=0.92\textwidth]{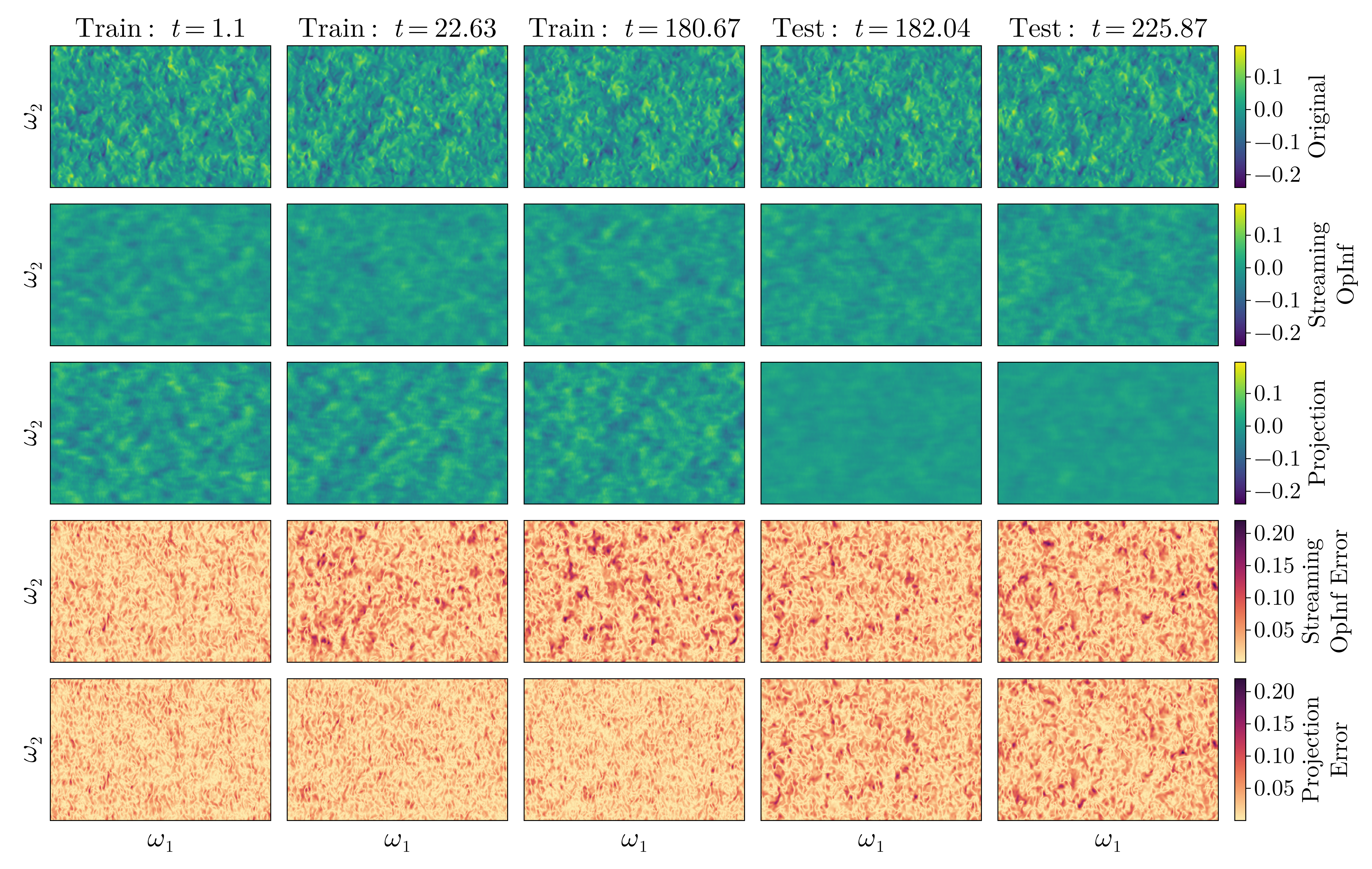}
    \includegraphics[width=0.92\textwidth]{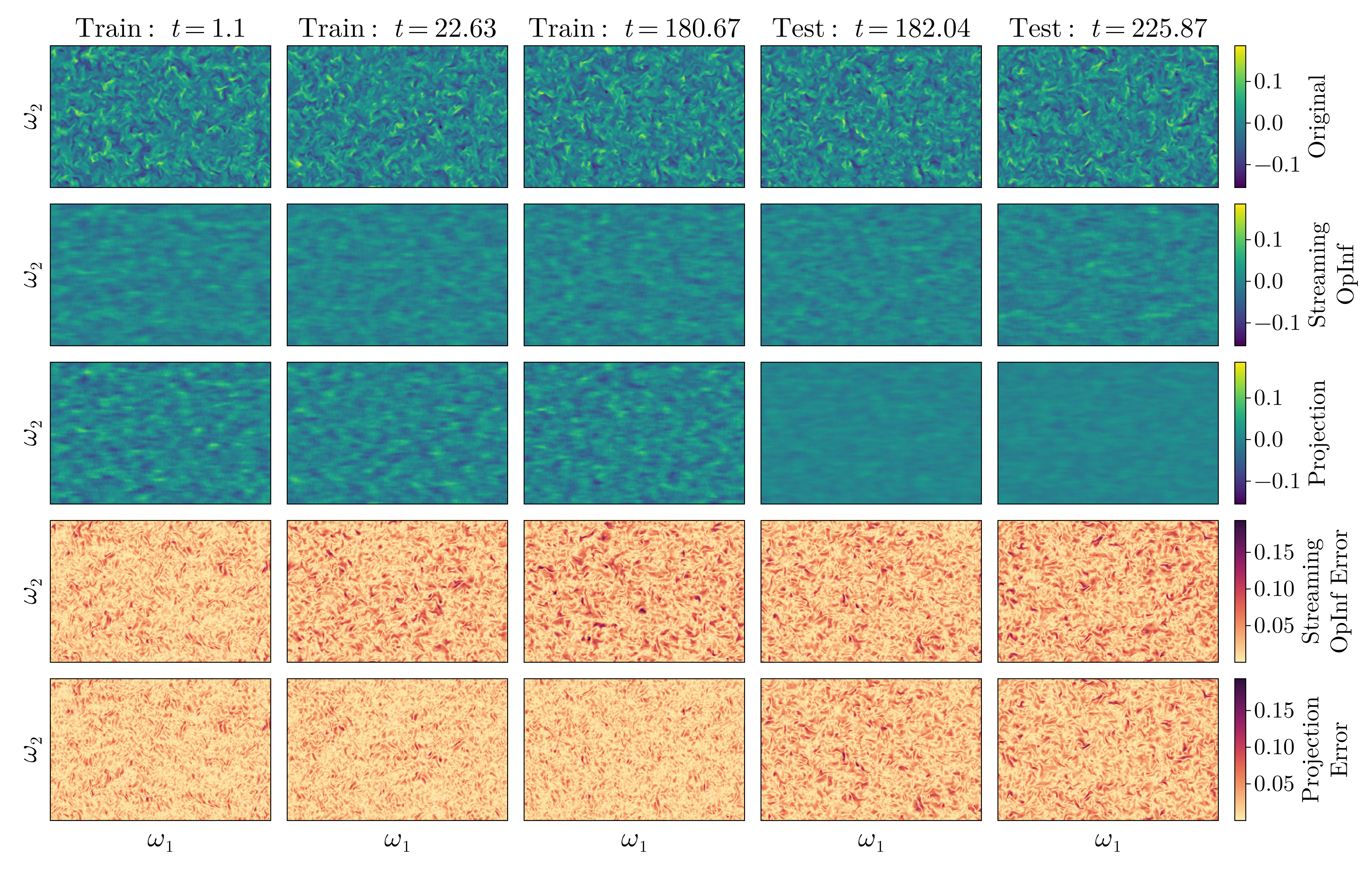}
    \vspace{-0.5em}
    \caption{Comparison of 2D slices of the 3D turbulent channel flow field predictions for the (\textbf{top}) \textbf{spanwise} and (\textbf{bottom}) \textbf{wall-normal} velocity components. First row is the original data, the second row is the prediction from Streaming-OpInf with \( r = 300 \), the third row is the projection result from POD with \( r = 300 \), and the last two rows is the absolute error between the original data and the predictions.}\label{fig:channel:v-train}
\end{figure}


\begin{figure}[H]
    \centering
    \includegraphics[width=0.92\textwidth]{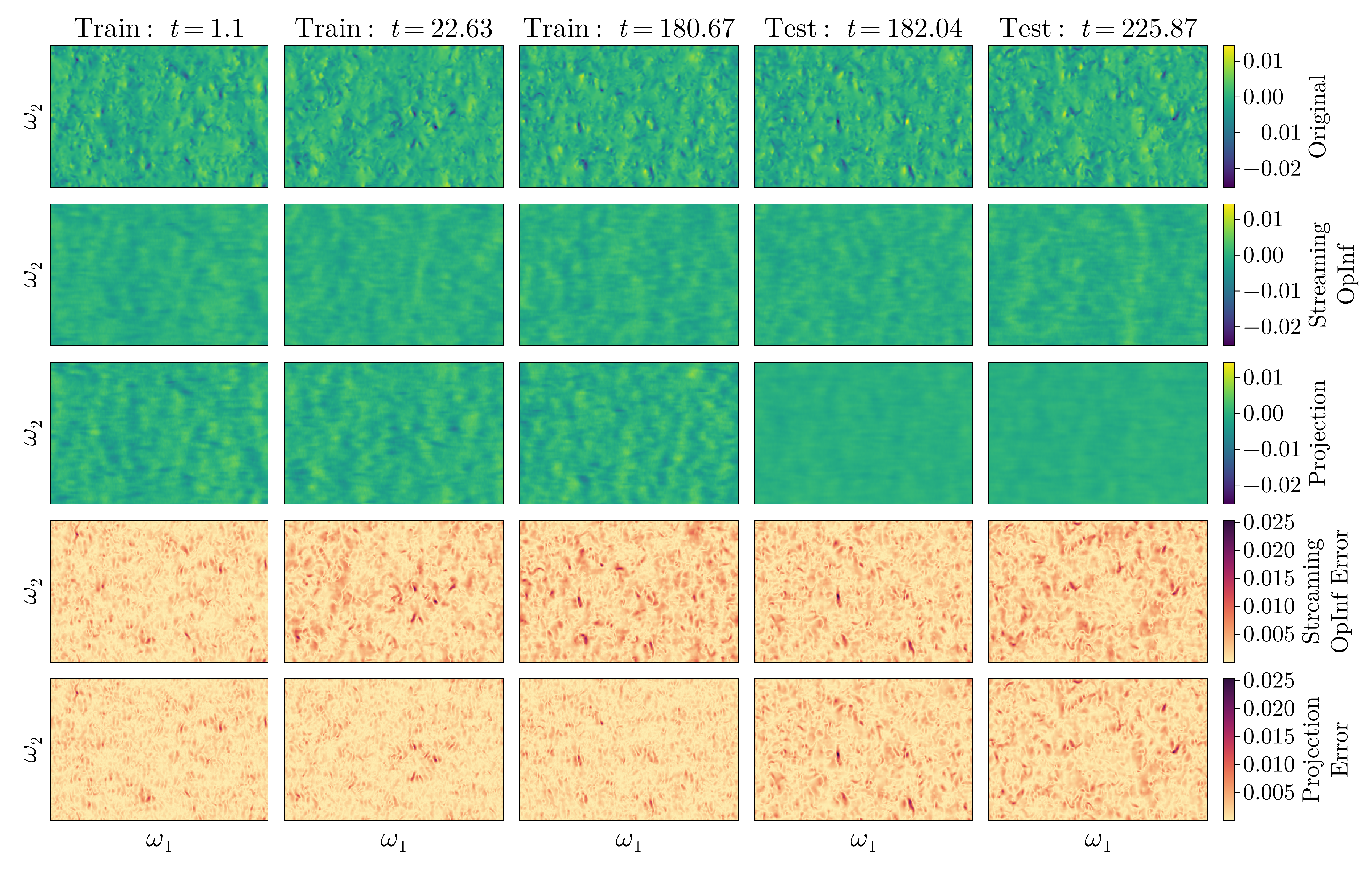}
    \vspace{-0.5em}
    \caption{Comparison of 2D slices of the 3D turbulent channel flow field predictions the \textbf{pressure field}. First row is the original data, the second row is the prediction from Streaming-OpInf with \( r = 300 \), the third row is the projection result from POD with \( r = 300 \), and the last two rows are the absolute error between the original data and the prediction from Streaming-OpInf and POD, respectively.}\label{fig:channel:p-train}
\end{figure}

\bibliographystyle{elsarticle-num} 
{\small\bibliography{refs}}

\end{document}